\newcommand{\arXiv}[1]{\href{http://arxiv.org/abs/#1}{\tt arXiv:\nolinkurl{#1}}}
\newcommand{\arxiv}[1]{\href{http://arxiv.org/abs/#1}{\tt arXiv:\nolinkurl{#1}}}
\newcommand{\googlebooks}[1]{(preview at \href{http://books.google.com/books?id=#1}{google books})}
\renewcommand{\MR}[1]{}
\definecolor{dark-red}{rgb}{0.7,0.25,0.25}
\definecolor{dark-blue}{rgb}{0.15,0.15,0.55}
\definecolor{medium-blue}{rgb}{0,0,.8}
\definecolor{DarkGreen}{RGB}{0,150,0}
\@date \else {\vskip3ex \centering\footnotesize\@date\par\vskip1ex}\fi
\else \@footnotetext{\@setdate}\fi}
\numberwithin{table}{section}
\title{Geometric Invariants for Fusion Categories}
\author{Tobias Hagge \and Matthew Titsworth}
\email{tobiashagge@gmail.com, matthew.titsworth@utdallas.edu}
\date{\today}
\begin{document}
\maketitle

\begin{abstract}
The problem of determining gauge and monoidal equivalence classes of fusion categories is considered from the perspective of geometric invariant theory.
It is shown that the gauge (or monoidal) class of a fusion category is determined by the evaluation of a finite set of gauge (or monoidal) invariant functions.
In the multiplicity free case this leads to a fast algorithm for computing a classifying set of functions.
\end{abstract}

\section{Introduction}
A fundamental problem in the study of fusion categories\cite{MR2183279} is to classify all fusion categories over a field $k$ with Grothendieck ring isomorphic to a based ring $(R,B)$ up to monoidal equivalence.
This is extremely difficult in general; a classification of all fusion categories would necessarily include a classification of all finite groups.
It is, however, sufficient to solve a set of polynomial equations known as the {\em pentagon equations}. 

There are a number of practical difficulties to this approach.
First, the number of equations and variables involved tends to grow exponentially as the number of simple objects increases.
Gr\"obner basis methods for solving such systems have worst-case complexity which is double-exponential in the size of the input.
Second, although Ocneanu rigidity (see \cite{MR2183279}) guarantees that there are only a finite number of equivalence classes of fusion categories with a given Grothendieck ring, solutions are only unique up to gauge transformations given by a group $G$ and automorphisms of the Grothendieck ring given by a group $Aut(R,B)$.
Thus it is often necessary to check solutions for equivalence.

In the presence of extra structure, there are a number of techniques to make classification easier.
For example, pointed categories with Grothendieck ring determined by a finite group $G$ are classified by $H^{3}(G,k^\times)$.
Every unitary fusion category comes from a finite index subfactor\cite{MR1055708}. 
Certain fusion categories can be realized using Cuntz algebras\cite{MR3167494} or Leavitt algebras\cite{2015arXiv150603546E}. 
Monoidal algebras\cite{MR1237835,MR2132671} have been used to classify all fusion categories coming from $A$-series Lie algebras and braided fusion categories coming from $B$, $C$, and $D$-series Lie algebras.

Fusion categories are often classified using invariants.
In \cite{2014arXiv1405.7950B} it is shown that two Tambara-Yamagami categories\cite{MR1659954} $\mathcal C$ and $\mathcal C'$ are equivalent if all of their higher Frobenius-Schur indicators as introduced by \cite{MR2381536} are equal. 
Categorical invariants such as the Turaev-Viro\cite{MR1191386} and Reshetikin-Turaev\cite{MR1036112,MR1091619} invariants become invariants of (subclasses of) fusion categories when the manifold is fixed and the category is taken as a parameter.
It is conjectured that the $(S,T)$ matrices classify all modular categories \cite{2013arXiv1310.7050B}.

In this work, we develop a new class of invariants strong enough to classify arbitrary fusion categories.
These invariants are computable and for a large class of fusion categories we show that our method leads to fast algorithms for computing gauge classes, monoidal classes and the Galois action on these classes.
For a subset of these, we show how to determine values of invariants without first solving the pentagon equations.

Our approach uses the machinery of geometric invariant theory (see \cite{MR1304906} or \cite{MR2004511} for an introduction).
Specifically, given a based ring $(R,B)$, the pentagon equations generate an ideal in some polynomial $k$-algebra $k[\Phi(R,B)]$. 
From this, one can construct an algebraic scheme \footnote{A separated affine scheme of finite type over $spec(k)$. These will be defined from affine algebraic sets, and by abuse of notation will also be denoted $X(R,B)$. These sets will themselves be useful, but the structure sheaf will carry all of the important information.} $X(R,B)$ (see Definition \ref{def:fusionsystem} and Remark \ref{rem:scheme}) which is acted upon by gauge group $G$ (Definition \ref{def:gaugetransform}) and Grothendieck ring automorphism group $Aut(R,B)$ (Definition \ref{def:autaction}).
Theorems of \cite{MR1304906} then reduce the problem of determining gauge and monoidal classes to one of constructing regular functions on $X(R,B)$ invariant under the actions of $G$ and $Aut(R,B)$.
In particular, for a non-empty scheme $X(R,B)$ associated to $(R,B)$ we prove the following:
\begin{theorem}\label{thm:main1}
Fix a based ring $(R,B)$.
Define $\mathscr N$ to be the number of gauge equivalence classes for fusion categories Grothendieck equivalent to $(R,B)$ and $\mathscr M$ the number of monoidal equivalence classes. 
For all $\mathbf F \in X(R,B)$, denote by $\mathcal C(\mathbf F)$ its associated fusion category.
Then there exists $\mathscr N-1$ $G$-invariant regular functions $f_{1},\ldots,f_{\mathscr N-1}$ on $X(R,B)$ such that for all $\mathbf F,\mathbf F' \in X(R,B)$, $\mathcal C(\mathbf F)$ and $\mathcal C(\mathbf F')$ are gauge equivalent if and only if
\begin{align}
f_i(\mathbf F')=f_i(\mathbf F), i=1,\ldots, {\mathscr N-1}
\end{align} 
Similarly, there exist $\mathscr M-1$ $Aut(R,B) \ltimes G$-invariant regular functions $g_1,\ldots, g_{\mathscr M-1}$ on $X(R,B)$ such that for all $\mathbf F, \mathbf F' \in X(R,B)$, $\mathcal C(\mathbf F)$ and $\mathbf F'$ are monoidally equivalent if and only if
\begin{align}
g_j(\mathbf F')=g_j(\mathbf F), j=1,\ldots, {\mathscr M-1}
\end{align} 
\end{theorem}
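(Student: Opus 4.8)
The plan is to reduce the statement to the idempotent structure of the coordinate ring of $X(R,B)$, using that the relevant group actions have only finitely many orbits, each of which is open. First I would record the dictionary between the categorical and geometric pictures: by Definition~\ref{def:gaugetransform} two points $\mathbf F,\mathbf F'\in X(R,B)$ yield gauge equivalent categories precisely when they lie in a common $G$-orbit, and by Definition~\ref{def:autaction} they yield monoidally equivalent categories precisely when they lie in a common $Aut(R,B)\ltimes G$-orbit. It therefore suffices to produce $\mathscr N-1$ $G$-invariant (resp.\ $\mathscr M-1$ $Aut(R,B)\ltimes G$-invariant) regular functions on $X(R,B)$ that separate the orbits. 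I will carry out the gauge case; the monoidal case is obtained verbatim by replacing $G$ with $Aut(R,B)\ltimes G$ and $\mathscr N$ with $\mathscr M$.

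The key geometric input is that \emph{every} orbit is open. Ocneanu rigidity holds at each solution of the pentagon equations, not merely generically, so no fusion category Grothendieck equivalent to $(R,B)$ admits a nontrivial deformation; at the level of $X(R,B)$ this says that the orbit $G\cdot\mathbf F$ fills a neighborhood of each of its points, i.e.\ $G\cdot\mathbf F$ is open. Because the $\mathscr N$ orbits partition $X(R,B)$ and there are finitely many of them, the complement of any orbit is a finite union of open orbits and hence open, so each orbit is also closed. Thus $X(R,B)$ decomposes into $\mathscr N$ clopen, $G$-invariant pieces $O_1,\dots,O_{\mathscr N}$, one per gauge class. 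I note that since the orbits are now closed and $G$ is reductive, the orbit-separation theorems of \cite{MR1304906} already guarantee separating invariants; the point of the idempotent argument below is to extract the sharp count $\mathscr N-1$.

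Next I would convert the clopen decomposition into regular functions. Writing $A=k[\Phi(R,B)]/I$ for the coordinate ring of $X(R,B)$, the decomposition of $\operatorname{Spec}(A)$ into the clopen sets $O_i$ corresponds to a complete system of orthogonal idempotents $e_1,\dots,e_{\mathscr N}\in A$, where $e_i$ is the indicator of $O_i$ and $\sum_{i=1}^{\mathscr N}e_i=1$; these idempotents exist in $A$ even when $X(R,B)$ is non-reduced, since idempotents are detected on the underlying topological space. Each $O_i$ is $G$-invariant, so each $e_i$ is a $G$-invariant regular function, and I set $f_i=e_i$ for $i=1,\dots,\mathscr N-1$. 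If $\mathbf F,\mathbf F'$ lie in the same $O_i$ then every $e_j$ agrees on them, so all $f_i(\mathbf F)=f_i(\mathbf F')$. Conversely the tuple $(e_1(\mathbf F),\dots,e_{\mathscr N}(\mathbf F))$ is the standard basis vector flagging the orbit containing $\mathbf F$, and since $e_{\mathscr N}=1-\sum_{i<\mathscr N}e_i$ is determined by $f_1,\dots,f_{\mathscr N-1}$, the equalities $f_i(\mathbf F)=f_i(\mathbf F')$ force $\mathbf F$ and $\mathbf F'$ into the same $O_i$, hence the same gauge class. This gives both directions of the asserted equivalence, and the functions $g_j$ arise identically from the $Aut(R,B)\ltimes G$-orbits.

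The main obstacle is the openness of the orbits: everything downstream is formal once the finitely many orbits are known to be clopen. I therefore expect the real work to be in justifying Ocneanu rigidity for $X(R,B)$ in the precise scheme-theoretic form needed, namely that rigidity applies pointwise to force each orbit open (rather than merely of maximal dimension) and that $G$ is reductive so the framework of \cite{MR1304906} is available. A secondary technical point is the base field: over a non-algebraically-closed $k$, or for the disconnected group $Aut(R,B)\ltimes G$, I would check that the $G$-invariant regular functions on a single orbit reduce to scalars so that the idempotents $e_i$ genuinely lie in $A$ and separate the classes; if necessary this is handled by passing to $\bar k$ and descending.
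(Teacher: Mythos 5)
Your strategy ends up building exactly the functions the paper builds --- indicator functions of the orbits, of which $\mathscr N-1$ (resp.\ $\mathscr M-1$) suffice because the last one is determined by the others --- but you reach them by a genuinely different route. The paper first proves all $G$-orbits are \emph{closed} (Lemma \ref{lem:closed}): the stabilizer is independent of the point by \cite[Section 4.1]{2013arXiv1305.2229D}, so all orbits have the same dimension, and orbits of minimal dimension are closed by \cite[8.3]{MR0396773}. It then invokes Mumford's separation result \cite[Corollary 1.2]{MR1304906} for disjoint closed invariant sets to get a $G$-invariant regular function equal to $1$ on a chosen orbit and $0$ on the union of all others (Proposition \ref{prop:gorbs}), and handles the monoidal case by the same logic for $Aut(R,B)\ltimes G$ after Proposition \ref{prop:monquotient}. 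Your replacement of the GIT separation step by commutative algebra is sound: once the $\mathscr N$ orbits are known to be clopen and $G$-invariant, the corresponding idempotents exist in the (possibly non-reduced) coordinate ring, are $G$-invariant because the orbits are, and separate the orbits; this part of your argument is more elementary than the paper's and needs nothing from \cite{MR1304906}.

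The gap is in how you obtain clopen-ness. You derive it from the claim that every orbit is \emph{open}, asserting that Ocneanu rigidity ``at each solution'' forces $G\cdot\mathbf F$ to fill a neighborhood of each of its points. That claim is true, but it is not the form of Ocneanu rigidity the paper quotes (finiteness of equivalence classes, see \cite{MR2183279}), and finiteness alone does not give openness: partitioning $\mathbb{A}^1$ into $\{0\}$ and its complement shows a finite partition into invariant locally closed sets need not consist of open sets. What your step really requires is the deformation-theoretic statement that the relevant $H^3$ vanishes at \emph{every} point, identified with the quotient of the scheme-theoretic tangent space $T_{\mathbf F}X(R,B)$ by the tangent space to the orbit; from $T_{\mathbf F}X(R,B)=T_{\mathbf F}(G\cdot\mathbf F)$ one must then argue that $X(R,B)$ is regular along the orbit, lies on a single irreducible component of the orbit's dimension, and hence that the orbit is open. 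You explicitly defer this (``the real work''), so as written the proof is incomplete at its foundation. The cheapest repair is to substitute the paper's own Lemma \ref{lem:closed}: closedness of all orbits (constant stabilizer plus dimension count) together with finiteness of the set of orbits makes each orbit clopen, since the complement of an orbit is a finite union of closed orbits; after that substitution your idempotent argument goes through verbatim.
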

For the case of multiplicity free fusion categories (i.e. all structure coefficients for the Grothendieck ring are $0$ or $1$) we obtain a stronger statement:
\begin{theorem}\label{thm:main2}
Fix a based ring $(R,B)$.
Define $\mathscr N$ to be the number of gauge equivalence classes for fusion categories Grothendieck equivalent to $(R,B)$ and $\mathscr M$ the number of monoidal equivalence classes. 
Then there exist $$P\leq{\mathscr N \choose 2}$$ $G$-invariant rational monomials 
\begin{align}\label{eq:invmons}
m_i&=(\phi_{i,1})^{k_{i,1}}\ldots(\phi_{i,j})^{k_{i,j}}, &\text{ with }& &\phi&\in \Phi(R,B) &\text{ and }& &k&\in\Z
\end{align}
such that for all $\mathbf F,\mathbf F' \in X(R,B)$, $\mathcal C(\mathbf F)$ and $\mathcal C(\mathbf F')$ are gauge equivalent if and only if
\begin{align}
F(m_i)&=F'(m_i),& i=1,\ldots P
\end{align}
or both are simultaneously undefined.
Similarly, there exist $$Q \leq {\mathscr M \choose 2}$$ $Aut(R,B)$-invariant linear combinations $l_1,\ldots, l_Q$ of $G$-invariant rational monomials as in \eq{invmons} such that $\mathcal C(\mathbf F)$ and $\mathcal C(\mathbf F')$ are monoidally equivalent if and only if
\begin{align}
F(l_j)&=F'(l_j),& j=1,\ldots Q.
\end{align}
\end{theorem}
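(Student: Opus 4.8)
The plan is to exploit the fact that, in the multiplicity-free case, the gauge group $G$ of Definition \ref{def:gaugetransform} is a torus $T \cong (k^\times)^{r}$: a multiplicity-free gauge transformation is specified by one nonzero scalar for each admissible fusion channel, and it rescales each coordinate $\phi \in \Phi(R,B)$ by a Laurent monomial in these scalars. Thus each $\phi$ is a $T$-semi-invariant of some weight $\chi_{\phi}$ in the character lattice $\hat T \cong \Z^{r}$, and the $T$-action on $k[\Phi(R,B)]$ is diagonal in the coordinate basis. First I would record this weight assignment explicitly and identify the lattice homomorphism $w \colon \Z^{\Phi(R,B)} \to \hat T$, $e_{\phi} \mapsto \chi_{\phi}$. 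A rational monomial $m = \prod_{\phi} \phi^{k_{\phi}}$ as in \eqref{eq:invmons} is then $G$-invariant precisely when $w\bigl((k_{\phi})\bigr)=0$, i.e. when its exponent vector lies in $L := \ker w$; since $L$ is the kernel of a map of free abelian groups it is saturated, so the $G$-invariant monomials are exactly the Laurent monomials descending to the torus quotient. Fixing a support pattern $S \subseteq \Phi(R,B)$ (the set of nonvanishing coordinates), the $T$-action preserves $S$, so $S$ is itself a gauge invariant, and on the locus of support $S$ two points lie in the same $T$-orbit if and only if every weight-zero Laurent monomial supported on $S$ takes equal values on them. The clause ``equal or both undefined'' is exactly the bookkeeping that matches support patterns before comparing values, giving: $\mathbf F,\mathbf F'$ are gauge equivalent iff they have the same support and agree on all $G$-invariant rational monomials.

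The cardinality bound is then a pigeonhole argument. For each of the $\binom{\mathscr N}{2}$ unordered pairs of distinct gauge classes, the separation statement supplies at least one $G$-invariant rational monomial that distinguishes the two classes (either by unequal values, or by being defined on exactly one). Choosing one such monomial per pair and taking the union yields a family of size $P \le \binom{\mathscr N}{2}$ that separates every pair simultaneously, which after relabeling gives $m_{1},\dots,m_{P}$.

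For the monoidal statement I would use that $Aut(R,B)$ is finite and normalizes $G$ inside $Aut(R,B)\ltimes G$ (Definition \ref{def:autaction}), acting on $\Phi(R,B)$ through its permutation of the basis $B$; hence it carries $G$-invariant monomials to $G$-invariant monomials. Because $T$ is a torus, its full invariant ring is spanned by the invariant monomials, so every $(Aut(R,B)\ltimes G)$-invariant regular function is a linear combination of $G$-invariant monomials; concretely, one produces such functions by averaging a $G$-invariant monomial over the finite group $Aut(R,B)$ via a Reynolds operator. By Theorem \ref{thm:main1}, or equivalently by separation of orbits of a finite group by its invariants applied to $X(R,B)/\!/G$, these $Aut(R,B)$-invariant combinations separate monoidal classes, and the same pairwise selection over the $\binom{\mathscr M}{2}$ pairs produces $l_{1},\dots,l_{Q}$ with $Q \le \binom{\mathscr M}{2}$.

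The main obstacle I anticipate is the careful treatment of vanishing coordinates. The clean torus-orbit separation holds on each fixed-support stratum, but a class distinction coming purely from a differing support is invisible to any weight-zero monomial whose denominator avoids the vanishing coordinate, and a coordinate whose weight is independent of the others appears in no invariant monomial at all. The crux is therefore to show, using the structure of the pentagon ideal defining $X(R,B)$ (in particular invertibility of the $F$-matrices, which pins down the admissible nonvanishing pattern), that the admissible support is constant across $X(R,B)$, so that the stratification is trivial and the ``undefined'' clause is only a formal safeguard. Establishing this, and thereby confirming that no gauge class distinction escapes the invariant monomials, is where the real work lies.
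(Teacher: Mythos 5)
Your proposal follows the same skeleton as the paper's argument: in the multiplicity-free case $G$ is a torus acting diagonally, each $\phi\in\Phi(R,B)$ carries a weight in the character lattice (the paper's map $t_A:A(\Phi)\rightarrow A(\Gamma)$), the $G$-invariant rational monomials are the kernel lattice (the paper's $A_{Inv}(\Phi)=\ker t_A$), orbit separation on a fixed support stratum follows from the torus/character argument (Lemma \ref{lem:big}, which the paper proves via divisibility of $k^\times$), and the counting bound is the same one-monomial-per-pair pigeonhole. Up to this point your plan is sound and essentially identical to the paper's.

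The genuine gap is in how you propose to resolve what you correctly identify as the crux. You plan to show that ``the admissible support is constant across $X(R,B)$,'' so that the stratification by zero sets is trivial and the ``undefined'' clause is a formal safeguard. This is false in general, and the paper's own example refutes it: for $Rep(D(S_3))$ (Section \ref{ss:ds3}, Table \ref{tab:zeroes1}) the solutions to the pentagon equations realize three genuinely different zero-set patterns (ten zero sets in all, even after accounting for the $Aut(R,B)$-action), and these distinguish monoidal classes. Invertibility of the $F$-matrices only forces nonvanishing of the $1\times 1$ blocks and determinants; it does not pin down a single nonvanishing pattern. Consequently your argument, as planned, cannot separate two gauge classes whose zero sets differ, since within-stratum torus separation says nothing about cross-stratum pairs. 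The paper closes exactly this hole by a different mechanism: Lemma \ref{lem:important} (proved with the graphical calculus and the structure of the pentagon equations, using that every equation \eq{F3} has either zero or at least two nonzero terms) shows that every coordinate $\phi$ with $F(\phi)\neq 0$ occurs with nonzero exponent in some monomial of $A_y$, whence $A_y$ determines $F^{-1}(0)$ (Corollary \ref{cor:last}, Lemma \ref{lem:equiv}). Thus a difference in zero sets is itself visible to the invariant monomials, through which monomials are defined and nonzero, and the ``or both are simultaneously undefined'' clause in the theorem does real work rather than being vacuous. Your proof needs this lemma (or a substitute for it); proving constancy of supports is not a viable substitute because the statement is false.
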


Computing these quantities involves a technical difficulty.
$G$-invariant rational monomials form a (finitely generated free Abelian) group.
Naively, one would like to compute a basis for this group and evaluate $\mathbf F\in X(R,B)$ on the basis monomials.
Unfortunately, if $0$ is in the image of $F$ the evaluation map on monomials is not totally defined and does not satisfy the homomorphism property.
However, the sets $F^{-1}(0)$ are gauge invariant and given $F^{-1}(0)$ there exists a fast algorithm for computing a set of nonzero invariant monomials which uniquely determine the evaluations on all other monomials\footnote{A priori, only the non-zero evaluations are determined, but as shown in Appendix \ref{app:important} all other evaluations are determined by the non-zero ones.}.

\begin{theorem}\label{thm:main2a}
Let $(R,B)$ be multiplicity free and $F^{-1}(0)$ be the pre-image of $0$ for some $\mathbf F \in X(R,B)$.
Then for all $\mathbf F'\in X(R,B)$ with $(F')^{-1}(0)=F^{-1}(0)$, the evaluations of $\mathbf F'$ on all $G$-invariant rational monomials are completely determined by the non-zero evaluations of $\mathbf F'$ on a finite set $\mathcal B$.
Given $(R,B)$ and $F^{-1}(0)$ a choice of $\mathcal B$ is computable with worst case runtime complexity\cite{MR2049753} $$\mathcal O(\vert \Phi\vert\vert \Gamma \vert^4 \log^2(\mathcal M)).$$ 
\end{theorem}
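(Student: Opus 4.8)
The plan is to reduce the statement to a computation in the integer kernel of the weight matrix of the gauge torus. In the multiplicity-free case the gauge group is a torus $G \cong (k^\times)^{\Gamma}$ acting diagonally on the affine coordinate space $k^{\Phi}$: each coordinate $\phi \in \Phi$ is a weight vector, so that $t = (t_\gamma)_{\gamma \in \Gamma}$ acts by $t\cdot \phi = \big(\prod_{\gamma} t_\gamma^{W_{\gamma\phi}}\big)\phi$ for an integer weight matrix $W \in \Z^{\Gamma\times\Phi}$ whose columns I read off directly from the gauge action in Definition \ref{def:gaugetransform}. Identifying a rational monomial $m = \prod_\phi \phi^{k_\phi}$ with its exponent vector $\mathbf k \in \Z^{\Phi}$, one computes $t\cdot m = \big(\prod_\gamma t_\gamma^{(W\mathbf k)_\gamma}\big)m$, so $m$ is $G$-invariant exactly when $W\mathbf k = 0$. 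Thus the group of $G$-invariant rational monomials is canonically the integer kernel $\ker_{\Z} W$, reproving the free-Abelian claim of the introduction.

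First I would pin down the evaluation map on this lattice. Write $Z = F^{-1}(0)$ and $N = \Phi\setminus Z$, and recall the hypothesis $(F')^{-1}(0) = Z$. For an invariant monomial $m$ with exponent vector $\mathbf k$, the evaluation $F'(m) = \prod_\phi F'(\phi)^{k_\phi}$ is: undefined if some $\phi\in Z$ has $k_\phi < 0$; equal to $0$ if $\operatorname{supp}(\mathbf k)$ meets $Z$ only in positive exponents; and nonzero precisely when $\operatorname{supp}(\mathbf k) \subseteq N$. The crucial point is that which of the three cases occurs is a function of $(\mathbf k, Z)$ alone, independent of the particular nonzero values taken by $\mathbf F'$, because $Z$ is fixed across all admissible $\mathbf F'$. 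Consequently the invariant monomials with nonzero evaluation are exactly the sublattice $\mathcal L := \ker_\Z W \cap \Z^{N}$, i.e. the integer kernel of the submatrix $W_N$ obtained by deleting the columns indexed by $Z$; being a sublattice of $\Z^N$ it is finitely generated and free.

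I would then take $\mathcal B$ to be any $\Z$-basis of $\mathcal L$ and verify it meets the two requirements. On $N$-supported monomials every factor $F'(\phi)$ is a unit, so evaluation is a genuine group homomorphism $\mathcal L \to k^\times$; hence $\mathbf k = \sum_i c_i \mathbf b_i$ gives $F'(m) = \prod_i F'(b_i)^{c_i}$, determined by the finitely many values $F'(b_i)$. Every remaining invariant monomial evaluates to $0$ or is undefined according to the $(\mathbf k, Z)$-dichotomy above, so its value is determined without further data; the sharper statement that these degenerate evaluations carry no information beyond the nonzero ones is deferred to Appendix \ref{app:important}. This shows the nonzero evaluations on $\mathcal B$ determine all evaluations.

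Finally, computing $\mathcal B$ is exactly computing an integer kernel basis of the $|\Gamma|\times|N|$ matrix $W_N$, with $|N|\le|\Phi|$; I would do this via a Hermite (or Smith) normal form using the fast modular algorithm of \cite{MR2049753}. Each column of $W$ records a bounded number of gauge factors, so its entries are bounded by an absolute constant; by Hadamard's inequality the nonzero minors, and hence the invariant factors and the entries of the output basis, are bounded by $\mathcal M$, of bit-length $\mathcal O(\log\mathcal M)$. Tracking the dimensions and this bit-bound through the normal-form algorithm yields the claimed $\mathcal O(|\Phi|\,|\Gamma|^{4}\log^{2}(\mathcal M))$. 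I expect the main obstacle to be analytic rather than structural: rigorously justifying that the partially-defined evaluation map factors through $(\mathbf k, Z)$ so that passing to $N$-supported monomials loses nothing (the content feeding Appendix \ref{app:important}), and controlling the intermediate bit-growth tightly enough to reach $\log^{2}(\mathcal M)$ rather than a cruder polynomial bound.
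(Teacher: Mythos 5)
Your proposal is correct and takes essentially the same route as the paper: the paper likewise identifies $G$-invariant rational monomials with the integer kernel of the exponent map $t_A:A(\Phi)\rightarrow A(\Gamma)$, restricts to the variables outside $F^{-1}(0)$ to get the lattice $A_y$, and computes a basis $\mathcal B(A_y)$ via Hermite normal form of the resulting $\vert\Gamma\vert\times\vert\Phi^\times\vert$ integer matrix with the same cited complexity bound. Your use of the homomorphism property on monomials supported off the zero set, and your deferral of the claim that degenerate (zero or undefined) evaluations carry no extra information to Appendix \ref{app:important}, mirror the paper's argument exactly.
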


Together with Theorem \ref{thm:main2} one obtains

\begin{corollary}\label{cor:compute}
Let $\mathbf F, \mathbf F'\in X(R,B)$ and $\rho \in Aut(R,B)$ such that $(\rho \cdot (F')^{-1}(0))=F^{-1}(0)$. 
$\mathcal C(\mathbf F)$ and $\mathcal C(\mathbf F')$ are then monoidally equivalent if and only if 
\begin{align}
F(m)&=F'(\rho\cdot m) & \forall m \in \mathcal B.
\end{align}

\end{corollary}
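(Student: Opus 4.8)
The plan is to deduce the corollary by chaining the two preceding theorems, with the element $\rho$ playing the role of absorbing the $Aut(R,B)$ part of a putative monoidal equivalence so that the remaining comparison becomes purely a gauge comparison. First I would record the standard reformulation: $\mathcal C(\mathbf F)$ and $\mathcal C(\mathbf F')$ are monoidally equivalent exactly when $\mathbf F$ and $\mathbf F'$ lie in a common $Aut(R,B)\ltimes G$-orbit, equivalently when there exists $\sigma\in Aut(R,B)$ with $\mathbf F$ and $\sigma\cdot\mathbf F'$ gauge equivalent. Since an element of $Aut(R,B)$ only relabels the simple objects, $\mathcal C(\mathbf F')$ and $\mathcal C(\sigma\cdot\mathbf F')$ are themselves monoidally equivalent, so this is a genuine equivalence rather than a one-sided implication, and in particular gauge equivalence of $\mathbf F$ and $\rho\cdot\mathbf F'$ will suffice to conclude monoidal equivalence of $\mathbf F$ and $\mathbf F'$.

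Next I would use that the vanishing locus $F^{-1}(0)$ is gauge invariant (as noted before Theorem~\ref{thm:main2a}) and transforms equivariantly under $Aut(R,B)$; hence any $\sigma$ realizing a monoidal equivalence must carry $(F')^{-1}(0)$ to $F^{-1}(0)$, which is precisely the hypothesis placed on $\rho$ and which forces $\mathbf F$ and $\rho\cdot\mathbf F'$ to share the common zero set $F^{-1}(0)$. For the \emph{if} direction, assume $F(m)=F'(\rho\cdot m)$ for every $m\in\mathcal B$. After translating the action on monomials $m\mapsto\rho\cdot m$ into the dual action on points, this says that $\mathbf F$ and $\rho\cdot\mathbf F'$ agree on the nonzero evaluations of $\mathcal B$; since their zero sets coincide, Theorem~\ref{thm:main2a} upgrades this to agreement on \emph{all} $G$-invariant rational monomials, and Theorem~\ref{thm:main2} identifies such agreement with gauge equivalence of $\mathbf F$ and $\rho\cdot\mathbf F'$. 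Composing with the $\rho$-relabeling then yields monoidal equivalence of $\mathcal C(\mathbf F)$ and $\mathcal C(\mathbf F')$.

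For the \emph{only if} direction I would reverse the chain: gauge equivalence of $\mathbf F$ and $\rho\cdot\mathbf F'$ gives, via Theorem~\ref{thm:main2}, agreement on all $G$-invariant rational monomials and in particular on $\mathcal B$, hence $F(m)=F'(\rho\cdot m)$ for all $m\in\mathcal B$. The delicate point—and the step I expect to be the main obstacle—is justifying that the \emph{specific} $\rho$ appearing in the hypothesis, rather than some a priori different $\sigma$ furnished by the monoidal equivalence, is the one that realizes the gauge equivalence. The mechanism I would invoke is that the set of valid automorphisms forms a coset of the stabilizer of $F^{-1}(0)$, while the test set $\mathcal B$ depends only on $F^{-1}(0)$ and not on the chosen representative; making this representative-independence precise, and verifying that the dual action on points matches $m\mapsto\rho\cdot m$ with the correct variance, is where the real care lies. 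Finally I would confirm that no normalization pathology arises from monomials on which either side vanishes, invoking the result of Appendix~\ref{app:important} that the nonzero evaluations on $\mathcal B$ determine all remaining evaluations.
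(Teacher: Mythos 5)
Your ``if'' direction is correct and is essentially the paper's own argument: a zero-set-matching $\rho$ reduces the problem to comparing $\mathbf F$ with $\rho\cdot\mathbf F'$, agreement of the nonzero evaluations on $\mathcal B$ propagates to all $G$-invariant rational monomials by Theorem \ref{thm:main2a} (Corollary \ref{cor:group} in the body, with the appendix handling the vanishing monomials), and Theorem \ref{thm:main2} converts that agreement into gauge equivalence, hence monoidal equivalence of $\mathcal C(\mathbf F)$ and $\mathcal C(\mathbf F')$ after composing with the relabeling functor $\mathcal G_\rho$.

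The gap is in the ``only if'' direction, exactly at the step you flagged, and the mechanism you propose does not close it. It is true that the zero-set-matching automorphisms form a coset $Stab(F^{-1}(0))\cdot\rho$ and that $\mathcal B$ depends only on $F^{-1}(0)$; but representative-independence of the \emph{test set} does not give representative-independence of the \emph{test}. Two representatives differ by an element $\tau$ stabilizing the zero set, $\tau$ acts nontrivially on invariant monomials, and $F'$ need not be invariant under $\tau$, so the condition $F(m)=F'(\rho\cdot m)$ can hold for one representative and fail for another. In fact the specific-$\rho$ biconditional is false, and the paper's own data refutes it: in Section \ref{ss:ds3} the third zero set of Table \ref{tab:zeroes1} is invariant under all of $Aut(D(S_3))$, yet $(\alpha_+,\alpha_-)$ interchanges the two gauge classes carrying it (Table \ref{tab:nonzero}). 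Take $\mathbf F'=\mathbf F$ in the class ``Zero set 3, unpermuted'' and $\rho=(\alpha_+,\alpha_-)$: the hypothesis $\rho\cdot(F')^{-1}(0)=F^{-1}(0)$ holds and $\mathcal C(\mathbf F)$, $\mathcal C(\mathbf F')$ are identical, but for $m=\Phi_{\alpha_+\alpha_+\alpha_+}^{\alpha_+;\beta_i\beta_i}\Phi_{\alpha_+\beta_i\alpha_+}^{\beta_i;\alpha_+\alpha_+}$ one has $F(m)=\frac{2}{3}$ while $F'(\rho\cdot m)=-\frac{2}{3}$; since $F$ and $F\circ\rho$ are both characters of the group $A_y$ (here $\rho$ preserves $A_y$ because it preserves the zero set), they must already disagree somewhere on the basis $\mathcal B$. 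This is why the paper proves the existentially quantified statement instead: Corollary \ref{cor:permeq} asserts monoidal equivalence iff \emph{some} $\rho$ matches both $A_y$ and the evaluations, and the remark following it notes that in practice one fixes a single zero-set-matching permutation and lets $\rho$ range over the stabilizer $Stab(Y_k)$. Your argument becomes correct, and coincides with the paper's, once ``the given $\rho$'' in the evaluation condition is replaced by ``some element of $Stab(F^{-1}(0))\cdot\rho$''; as written, the only-if step cannot be repaired.
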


This falls short of an algorithmic proof of Theorem 1.2, but for practical purposes is just as good.
All of Theorem \ref{thm:main1} through Corollary \ref{cor:compute} are consequences of the following: 
\begin{theorem}\label{thm:main3}
Let $X(R,B)$ be a fusion scheme acted upon by a gauge group $G$ and automorphism group $Aut(R,B)$.
Then there exists an algebraic scheme $Y(R,B)$ and surjective morphism of schemes $$\psi:X(R,B)\rightarrow Y(R,B)$$ such that $(Y(R,B),\psi)$ is a geometric quotient of $X(R,B)$ by $G$ in the sense of \cite[Definition 0.6]{MR1304906}.
Similarly, there exists an algebraic scheme $Z(R,B)$ and surjective morphism of schemes $$\eta:Y(R,B)\rightarrow Z(R,B)$$ such that $(Z(R,B),\psi\circ \eta)$ is a geometric quotient of $X(R,B)$ by $Aut\ltimes G$. 
\end{theorem}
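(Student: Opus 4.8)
The plan is to realize $Y(R,B)$ as the affine GIT quotient of $X(R,B)$ by $G$ and then to peel off the finite group $Aut(R,B)$ in a second stage. Since the gauge group $G$ (Definition \ref{def:gaugetransform}) is a product of general linear groups, acting by base change on the multiplicity spaces, it is linearly reductive, and $X(R,B)$ is affine by construction. The fundamental existence theorem of geometric invariant theory \cite{MR1304906} therefore produces the affine scheme $Y(R,B):=\operatorname{Spec}\bigl(k[X(R,B)]^{G}\bigr)$ together with a surjective $G$-invariant quotient morphism $\psi$ which is a \emph{good} (categorical) quotient. The remaining content of the first assertion is to upgrade ``good'' to ``geometric'' in the sense of \cite[Definition 0.6]{MR1304906}; by the standard criterion this amounts to showing that every $G$-orbit in $X(R,B)$ is closed, so that the fibres of $\psi$ are precisely the orbits.

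To prove closedness I would show that all orbits share a common dimension. Let $N\subseteq G$ be the normal subgroup of gauge transformations acting trivially on every point of $X(R,B)$, and let $\bar G:=G/N$ be the effective gauge group, which is again reductive and satisfies $k[X(R,B)]^{\bar G}=k[X(R,B)]^{G}$ with the same orbits. For $\mathbf F\in X(R,B)$ the stabilizer $\operatorname{Stab}_{\bar G}(\mathbf F)$ is exactly the group $\operatorname{Aut}_{\otimes}(\mathrm{Id}_{\mathcal C(\mathbf F)})$ of monoidal natural automorphisms of the identity functor: such an automorphism is a multiplicative assignment of scalars to the simple objects, i.e. a character of the universal grading group $U(\mathcal C(\mathbf F))$. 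Since $U$ is a finite group determined by the fusion rules, $\operatorname{Stab}_{\bar G}(\mathbf F)\cong\widehat{U(R,B)}$ is finite, hence zero-dimensional, independent of $\mathbf F$. Consequently every orbit has dimension $\dim\bar G$, a constant. Because the boundary $\overline{\bar G\cdot\mathbf F}\setminus\bar G\cdot\mathbf F$ of any orbit is a union of orbits of strictly smaller dimension, equal dimension forces this boundary to be empty; every orbit is therefore closed and $\psi$ is a geometric quotient. Ocneanu rigidity \cite{MR2183279}, which moreover makes $X(R,B)$ a union of finitely many such orbits, is a reassuring cross-check.

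For the second assertion I would exploit that $Aut(R,B)$ (Definition \ref{def:autaction}) is finite and that $G$ is normal in $\Gamma:=Aut(R,B)\ltimes G$. Since $Aut(R,B)$ permutes the $G$-orbits, its action descends along $\psi$ to an action on $Y(R,B)$, and because the group is finite the geometric quotient $Z(R,B):=\operatorname{Spec}\bigl(k[Y(R,B)]^{Aut(R,B)}\bigr)$ exists with quotient morphism $\eta$. It then remains to verify that the composite $\eta\circ\psi$ is a geometric quotient of $X(R,B)$ by the full group $\Gamma$. This is the ``quotient in stages'' principle, valid because $G$ is normal in $\Gamma$: a $\Gamma$-orbit is a finite union of $G$-orbits permuted by $Aut(R,B)$, which $\psi$ carries to a single $Aut(R,B)$-orbit in $Y(R,B)$ and $\eta$ then collapses to a point, so the fibres of $\eta\circ\psi$ are exactly the $\Gamma$-orbits; the structure-sheaf condition follows by composing the corresponding conditions for $\psi$ and $\eta$.

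The reductivity, affineness, good-quotient existence, and stages arguments are routine GIT bookkeeping. The one genuinely category-theoretic step, and the point I expect to require the most care, is the stabilizer computation of the second paragraph: identifying $\operatorname{Stab}_{\bar G}(\mathbf F)$ with the finite group of tensor automorphisms of the identity functor, and thereby certifying that all gauge orbits have a common dimension. Once that is in hand, the passage from a good quotient to a geometric one, and the entire second stage, are formal.
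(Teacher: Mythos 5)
Your overall strategy coincides with the paper's: Mumford's affine quotient $Y(R,B)=\mathrm{Spec}\bigl(\mathcal O_X(X)^G\bigr)$ using reductivity of $G$, upgrading from a good to a geometric quotient by showing all $G$-orbits are closed, deducing closedness from the fact that all orbits have the same dimension, and then treating $Aut(R,B)$ as a finite group acting on $Y(R,B)$ in a second stage (this is exactly Lemma \ref{lem:closed} together with Propositions \ref{prop:gaugequotient} and \ref{prop:monquotient}). The GIT bookkeeping and the quotient-in-stages argument are fine.

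The gap is at the step you yourself flagged as critical, and as written it is wrong. A gauge transformation $g$ fixes $\mathbf F$ precisely when $(Id,Id,g)$ is a monoidal endofunctor of $\mathcal C(\mathbf F)$, i.e.\ when $g$ is a \emph{tensor structure on the identity functor}; this is not the same thing as a \emph{monoidal natural automorphism} of the identity functor. The latter are tuples of scalars $\lambda_a$ with $\lambda_a\lambda_b=\lambda_c$ whenever $N_{ab}^c\neq 0$, i.e.\ characters of the universal grading group, and such a tuple corresponds to the gauge transformation $(g_\lambda)_{ab}^c=\lambda_a\lambda_b\lambda_c^{-1}\,Id$, which for $\lambda\in\widehat{U(R,B)}$ is the \emph{identity} element of $G$. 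Thus $\widehat{U(R,B)}$ is the kernel of the map $\lambda\mapsto g_\lambda$ into $G$; it is not a subgroup of $\bar G$ at all, let alone the stabilizer. Concretely, take $(R,B)=\Z A$ for a nontrivial finite abelian group $A$ (pointed categories): points of $X$ are normalized $3$-cocycles, gauge transformations are $2$-cochains acting by $\omega\mapsto\omega\,dg$, so the stabilizer of every point is the group of $2$-cocycles $Z^2(A,k^\times)$, which is also the kernel $N$ of the action; hence $\mathrm{Stab}_{\bar G}(\mathbf F)$ is trivial, while $\widehat{U}=\widehat{A}\neq 1$. So your claimed isomorphism fails, and with it your proof that the reduced stabilizers are finite and equidimensional. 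The fact you need is true, but it is a genuine rigidity statement rather than a consequence of finiteness of $U(R,B)$: either cite, as the paper does in Lemma \ref{lem:closed}, that $\mathrm{Stab}_G(\mathbf F)$ is literally the same subgroup for every $\mathbf F$ \cite[Section 4.1]{2013arXiv1305.2229D} (which forces $\mathrm{Stab}_G(\mathbf F)=N$, so that $\bar G$ acts freely), or prove that $\mathrm{Stab}_G(\mathbf F)$ contains the group $\{g_\lambda\}$ of scalar gauge transformations (of dimension $|B|$, independent of $\mathbf F$) with finite quotient, the quotient being the set of isomorphism classes of tensor structures on $Id_{\mathcal C(\mathbf F)}$, whose finiteness is an Ocneanu-rigidity/Davydov--Yetter $H^2$-vanishing statement. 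Everything downstream of this step -- closed orbits, the geometric quotient, and the finite second stage -- is sound once this is repaired.
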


The structure of this paper is as follows:
Section \ref{s:background} defines fusion categories and the algebraic sets associated to them.
In section \ref{s:quotients}, the algebraic groups $G$ and $Aut(R,B) \ltimes G$ are defined, fusion schemes are defined, and Theorems \ref{thm:main1}, \ref{thm:main3}, and \ref{thm:main2} are proven.
Section \ref{s:practical} addresses computation issues, proves Theorem \ref{thm:main2a}, and an algorithm is given for computing a classifying set of invariants. 
Finally, Section \ref{s:examples} provides concrete examples.
The quotient scheme associated to the Fibonacci and Yang-Lee categories is constructed in Section \ref{ss:exfib}.
As a more significant example, in section \ref{ss:ds3} solutions the pentagon equations for the Grothendieck ring of $Rep(D(S_3))$ are used to compute monoidal classes and show the existence of a new fusion category which does not admit braiding.

\begin{remark}
While pivotal structures and braidings are ignored in the current work, the results presented carry over; the braiding and pivotal constants are also acted upon algebraically by $G$ and there are in fact invariant functions which distinguish braided and pivotal equivalences classes of fusion categories.
The argument follows lines similar to those given in this document and is omitted for brevity.
Still, in presenting monoidal classes for $Rep(D(S_3))$ it is worth while to note which modular structures are associated to the underlying fusion categories, so this is done.
\end{remark}

\begin{remark}
The solutions to pentagon equations for $Rep(D(S_3))$ are rather large and unwieldy, so we have opted to instead include them in a Mathematica package designed to manipulate the arithmetic data for fusion categories.
This package can be found at \href{http://mktitsworth.com/fusion-categories/}{http://mktitsworth.com/fusion-categories} and includes example data for many other categories as well.
\end{remark}

\section{Background}\label{s:background}
In the sequel, we assume $k$ is an algebraically closed field of characteristic $0$.
We first recall the definitions of based rings and fusion categories.

\begin{definition}[{\cite[Definitions 2.1 and 2.2]{MR1976459}}]\label{def:basedring}
A {\em unital based ring} $(R,B)$ is a $\Z^+$- ring $R$ together with a set $B\subset R$ and identity $1_R\in B$ such that
\begin{itemize}
	\item (Structure constants) There exist non-negative integers $N_{XY}^Z$ for $X,Y,Z\in B$ such that 
	\begin{displaymath}
	X Y = \sum_{Z\in B}N_{XY}^Z.
	\end{displaymath}
	\item (Duality) A bijection $*:B\rightarrow B$ such that $1_R^*=1_R$ which extends to an anti-involution on $(R,B)$, i.e. $(XY)^*=X^*Y^*,\forall X,Y\in B$.
\end{itemize}

By associativity the structure constants must satisfy 
\begin{equation}
\sum_{U\in B}N_{XY}^U N_{UZ}^W = \sum_{V\in B}N_{YZ}^V N_{XV}^W
\end{equation}
for all $U,V,W,X,Y,Z\in B$.

The structure constants define a map $N:B^{\times 3}\rightarrow \Z^+$ which we can extend recursively to arbitrary $n+1>3$ via
$$
N_{X_1 \ldots X_n}^{Y}:=\sum_{Z\in B}N_{X_1\ldots X_{n-1}}^{Z}N_{Z X_n}^Y
$$
for all $X_1,\ldots, X_n, Y\in B$.
Those structure constants which are non-zero will play an important role and so we define
$$\Gamma(R,B)=\{(X,Y,Z)\in B^{\times 3}\vert N_{XY}^Z\neq 0\}$$
with $\gamma_{XY}^Z$ the notation for $(X,Y,Z)$. 
We will say that $(R,B)$ is {\em multiplicity free} if $N(\Gamma(R,B))=\{1\}$.

For based rings $(R,B)$ and $(R,B')$, every bijection $\rho:B\rightarrow B'$ satisfying
\begin{equation}
N_{XY}^Z=N_{\phi(X)\phi(Y)}^{\phi(Z)},\forall X,Y,Z\in B
\end{equation}
defines a unique based ring isomorphism $\rho':(R,B)\rightarrow (R,B')$ and vice versa.
Let $Aut(R,B)$ to be the group of based ring automorphisms of $(R,B)$.
\end{definition}

\begin{definition}\label{def:fusioncategory}
A {\em fusion category} $\mathcal C$ over $k$ is a monoidal semi-simple Abelian category\footnote{We will denote the monoidal bifunctor of a fusion category $\mathcal C$ by $\otimes$ and direct sum of objects by $\oplus$} with identity $\mathbf 1$ such that
\begin{enumerate}
	\item ($k$-linearity) $\mathcal C$ is enriched over $Vec_{Fin}(k)$. This is to say that $\mathcal C(a,b)$ is a finite dimensional vector space over $k$ for all objects $a,b\in \mathcal C_0$.
	\item (Finiteness) There are finitely many isomorphism classes of simple objects in $\mathcal C_0$ and $\mathcal C(a,a)\cong k$ for all simple objects $a\in \mathcal C_0$.
	\item (Rigidity) For every object $a\in \mathcal C_0$, there is an object $a^*\in \mathcal C_0$ and evaluation and co-evaluation maps
	\begin{align*}
	ev_{a}:a \otimes a^*&\rightarrow \mathbf 1 & coev_{a}:\mathbf 1 &\rightarrow a^*\otimes a
	\end{align*}
	such that
	\begin{align}
	\lambda_a \circ (ev_a\otimes Id_a)\circ \alpha_{a,a^*,a}\circ (Id_a \otimes coev_a)\circ \rho_a^{-1}&=Id_a \text{ and }\\
	\rho_{a^*} \circ (Id_{a^*}\otimes ev_{a^*})\circ \alpha_{a^*,a,a^*}^{-1} \circ (coev_{a^*}\otimes Id_{a^*}) \circ \lambda_{a^*}^{-1} &=Id_{a^*}.
	\end{align}
\end{enumerate}

We denote by $\mathcal K_0(\mathcal C)$ the Grothendieck ring of $\mathcal C$.
This is a based ring as defined in \ref{def:basedring} with basis elements corresponding to equivalence classes of simple objects $a,b,\ldots \in \mathcal C_0$ and multiplication induced from $\otimes$ via $N_{a_1\ldots a_n}^b=dim(\mathcal C(a_1\otimes\ldots \otimes a_n, b))$.
We say that $\mathcal C$ is multiplicity free if $\mathcal K_0(\mathcal C)$ is multiplicity free.
Given two fusion categories $\mathcal C$ and $\mathcal D$ we say that they are {\em Grothendieck equivalent} if and only if $\mathcal K_0(\mathcal C)\cong \mathcal K_0(\mathcal D)$.
\end{definition}

We say that a based ring $(R,B)$ {\em admits categorification} if there exists a fusion category $\mathcal C$ such that $(R,B)\cong\mathcal K_0(\mathcal C)$.
The questions are then: (1) given a based ring $(R,B)$ does it admit a categorification and (2) if so, how many?
It is well known that $(R,B)$ admits categorification if and only if there exists a solution to the pentagon equations (see any of \cite{MR1659954,MR1797619,2013arXiv1305.2229D} for the standard construction).
The solutions to these equations define an affine algebraic set $X$ in the classical sense, i.e. they define a set of points in some affine $k$-space.

\begin{definition}\label{def:fusionsystem}
Fix a based ring $(R,B)$.
For every $(a,b,c,d)\in B^{\times 4}$, such that $N_{abc}^d\neq 0$ define the matrix $\Phi_{abc}^d\in Mat_{N_{abc}^d\times N_{abc}^d}(k)$ with entries denoted
\begin{displaymath}
\tenj{\Phi_{abc}^d,i,e,j,i',f,j'},\left.\begin{array}{l} f\in B \\ e \in B\end{array}\right.,\left.\begin{array}{l} i'\in \{1,\ldots, N_{af}^d\} \\ i \in \{1,\ldots, N_{ab}^e\} \end{array}\right.,\left.\begin{array}{l} j'\in \{1,\ldots, N_{bc}^f\} \\ j \in \{1,\ldots, N_{ec}^d\} \end{array}\right.
\end{displaymath}
and define $\Phi(R,B)$ to be the set of entries in all $\Phi_{abc}^d$.
The ideal in $k[\Phi(R,B)]$ generated by 
\begin{align}
	&\Phi_{a \mathbf 1 b}^c=Id_{N_{ab}^c} \label{eq:F1} \\
	&\tenj{\Phi_{a a^* a}^a,1,1,1,1,1,1}\neq 0 \label{eq:F2} \\
	&\sum_{n''=1}^{N_{fh}^e}\tenj{\Phi_{fcd}^e,m,i,n,l'',h,n''}\tenj{\Phi_{abh}^e,l,f,n'',m''',j,n'} = \label{eq:F3} \\
	&\sum_{g\in L}\sum_{l'=1}^{N_{bc}^g}\sum_{m'=1}^{N_{ag}^i}\sum_{m''=1}^{N_{gd}^j}\tenj{\Phi_{abc}^d,l,f,m,l',g,m'}\tenj{\Phi_{agd}^e,m',i,n',m'',j,n''}\tenj{\Phi_{bcd}^j,l',g,m'',l'',h,m'''} \nonumber
\end{align}
together with the condition that the $\Phi_{abc}^d$ be invertible defines an algebraic set $X(R,B)$.
A point $\mathbf F\in X(R,B)$ is a map $F:\Phi(R,B)\rightarrow k$ such that the equations defined by \eq{F1}-\eq{F3} are satisfied.
We will also fix the notation $\tenj{F_{abc}^d,i,e,j,i',f,j'}:=F(\tenj{\Phi_{abc}^d,i,e,j,i',f,j'})$ and $F_{abc}^d$ for the matrix with entries $\tenj{F_{abc}^d,i,e,j,i',f,j'}$.
\end{definition}

Given a fusion category $\mathcal C$ with $\mathcal K_0(\mathcal C)\cong (R,B)$, one can define a point $\mathbf F_\mathcal C\in X(R,B)$.
Given a point $\mathbf F\in X(R,B)$, one can construct a fusion category $\mathcal C(\mathbf F)$ such that $\mathcal K_0(\mathcal C)\cong (R,B)$.
Starting with $\mathcal C$, one has that $\mathcal C(\mathbf F_\mathcal C)\cong \mathcal C$\cite[Lemma 3.4 and Proposition 3.7]{2013arXiv1305.2229D}.

\section{Invariants of fusion schemes}\label{s:quotients}
For brevity, fix a based ring $(R,B)$ and let $X=X(R,B)$, $\Gamma=\Gamma(R,B)$, and $\Phi=\Phi(R,B)$.

\subsection{Group actions and quotients}
\begin{definition}\label{def:gaugetransform}
Given $X$, let $G=\prod_{\gamma_{ab}^c\in \Gamma}GL_{N_{ab}^c}(k)$.
For $g\in G$, fix the notations $$g_{ab}^c\in GL_{N_{ab}^c}(k) \text{ and } g^{ab}_c=(g_{ab}^c)^{-1}.$$
Then $$g=\prod_{\gamma_{ab}^c\in \Gamma}(g_{ab}^c)\text{ and }g^{-1}=\prod_{\gamma_{ab}^c\in \Gamma}(g^{ab}_c).$$

Let $G$ act algebraically on the point $\mathbf F \in X$ such that for $g\cdot \mathbf F$, the induced map $(g\cdot F)$ is given by
\begin{equation}\label{eq:gaugemult}
\tenj{(g\cdot F)_{abc}^{d},i',e,j',k',f,l'}:=\sum_{i=1}^{N_{ab}^e}\sum_{j=1}^{N_{ec}^d}\sum_{k=1}^{N_{bc}^f}\sum_{l=1}^{N_{af}^d}(g_{ab}^e)_i^{i'}(g_{ec}^d)_j^{j'}\tenj{F_{abc}^d,i,e,j,k,f,l}(g_d^{af})_{l'}^l(g_f^{bc})_{k'}^k.
\end{equation}
It is straight forward to check that $g\cdot \mathbf F$ is a point of $X$.
Call $\mathbf F,\mathbf F'\in X$ {\em gauge equivalent} and write $\mathbf F \sim \mathbf F'$ if they belong to the same orbit $G\cdot\mathbf F$ of the $G$-action.
Say that $\mathcal C(\mathbf F)$ and $\mathcal C(\mathbf F')$ are gauge equivalent if and only if $\mathbf F \sim \mathbf F'$.
\end{definition}

\begin{remark}\label{rem:scheme}
For any algebrac set $X$ associated to the zero set of a polynomial $I$, the {\em ring of regular functions} (with respect to $I$) on $X$ is $$O_X(X):= k[x_1, ..., x_n]/I.$$ 
There is a natural surjection from $O_X(X)$ to restrictions of polynomial functions on $k^n$ to $X$ and this is a bijection if $I$ is a radical ideal. 
Given any two points in an algebraic set $X$, there is a polynomial (and thus a regular function) which distinguishes them.

One wishes to imitate this behavior at the level of orbits, that is, to find polynomial functions which distinguish the orbits of $X$ under the action of the gauge group $G$ and later the permutation group $Aut(R,B)$ as well. 
Let $O_X(X)^G$ be the ring of $G$-invariant regular functions on $X$, that is equivalence classes of $G$-invariant polynomial functions defined on $X$. 
The desired polynomial functions must then be equivalence class representatives of elements of $O_X(X)^G$.

Mumford's geometric invariant theory\cite{MR1304906} establishes that under suitable conditions, there is an object $Y$, the ``points'' of which correspond to orbits in $X$ and the ``regular functions'' of which form the ring $O_X(X)^G$. 
Furthermore, given any two orbits there is an element of $O_X(X)^G$ which distinguishes them.

The object $Y$ is not an algebraic set, but rather a scheme, or in other words, a locally ringed space isomorphic to the spectrum of a commutative ring.
Given an ideal $I$ and corresponding algebraic set $X$, Mumford gives $X$ a scheme structure by constructing its structure sheaf $\mathcal O_X$; the pair $(X,\mathcal O_X)$, and by abuse of notation $X$, shall be referred to as a {\em fusion scheme}.
He then shows that under suitable conditions the spectrum of the ring $O_X(X)^G$, along with a suitable morphism of schemes, give the desired quotient structure.

All that said, however, the functions in $\mathcal O_X(X)^G$, i.e. the objects of our interest, are still classes of polynomial functions on $X$. 
For the reader who is willing to take the statements of a few theorems on faith, it is not necessary to understand anything about schemes, locally ringed spaces, or geometric invariant theory in order to appreciate the main result of this paper.
\end{remark}

\begin{lemma}\label{lem:closed}
The orbits of $G$ in $X$ are closed.
\end{lemma}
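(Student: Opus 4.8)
We need to show that every orbit $G \cdot \mathbf{F}$ of the gauge group $G = \prod_{\gamma_{ab}^c \in \Gamma} GL_{N_{ab}^c}(k)$ acting on $X = X(R,B)$ is closed in the Zariski topology. This is important because the geometric quotient in Theorem~\ref{thm:main3} requires closed orbits (GIT produces a geometric quotient, separating orbits by invariant functions, precisely when orbits are closed).

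Let me think about how to prove this.

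The key structural fact I want to exploit is the $F$-move equation for the "triangle" axiom \eqref{eq:F1}: $\Phi_{a\mathbf{1}b}^c = \mathrm{Id}$, and especially the invertibility of the $\Phi_{abc}^d$ together with the non-degeneracy \eqref{eq:F2}. The gauge action \eqref{eq:gaugemult} conjugates/multiplies the $F$-matrices by the group elements $g_{ab}^e$ etc. The obstacle to closedness for general reductive group actions is that orbits can fail to be closed — the classic example being $GL_n$ acting by conjugation, where the orbit of a non-semisimple matrix has the semisimple part in its closure, or torus actions where orbits limit to fixed points. So I need a reason the gauge orbits here cannot degenerate.

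**Plan of proof.** I would argue as follows.

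First, I would identify the stabilizer structure and use a criterion for closedness. There is a clean sufficient condition: a $G$-orbit is closed if and only if the orbit map $G \to G\cdot\mathbf F$ is "proper enough," but more usefully, for a reductive group $G$ acting on an affine variety, an orbit $G\cdot \mathbf F$ is closed iff for every one-parameter subgroup (cocharacter) $\lambda: \mathbb{G}_m \to G$ for which $\lim_{t\to 0}\lambda(t)\cdot \mathbf F$ exists, that limit already lies in $G\cdot \mathbf F$. This is the Hilbert–Mumford / Kempf formulation of closedness. So the concrete plan is: take a one-parameter subgroup $\lambda(t) = \prod_{\gamma} \lambda_{ab}^c(t)$ of $G$, assume $\lim_{t\to 0}\lambda(t)\cdot \mathbf F =: \mathbf F_0$ exists in $X$, and show $\mathbf F_0 \in G\cdot \mathbf F$ — in fact I expect to show the limit forces $\lambda$ to act by scalars on the relevant entries, so the limit is already in the orbit.

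Second, I would extract the arithmetic constraint that prevents degeneration. The crucial point is \eqref{eq:F1}: because $\Phi_{a\mathbf{1}b}^c$ and $\Phi_{a b \mathbf 1}^c$, $\Phi_{\mathbf 1 ab}^c$ are fixed to be the identity, the gauge transformation \eqref{eq:gaugemult} applied to these "trivial" $F$-matrices yields relations among the $g_{ab}^c$ — specifically it pins down the entries $g_{a\mathbf 1}^a$ and $g_{\mathbf 1 a}^a$ (the unit components) and forces a normalization. More importantly, invertibility of every $F_{abc}^d$ means that in the limit $t\to 0$ the matrices $\lambda(t)\cdot F_{abc}^d$ must stay invertible (determinant nonzero is a closed-orbit-friendly condition: the determinant of each block transforms by a product of characters $\chi_{abc}^d(\lambda(t))$, and requiring a finite nonzero limit forces that character to be trivial). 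Because the $\Phi_{abc}^d$ are required to be invertible on all of $X$, the limit point $\mathbf F_0$ also has all blocks invertible, and the relevant scaling characters must all evaluate trivially on $\lambda$.

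Third, from triviality of all these characters on the cocharacter $\lambda$, I would conclude that $\lambda(t)$ acts on $\mathbf F$ through a bounded (indeed, for each entry, eventually constant) family, so $\lim_{t\to 0}\lambda(t)\cdot \mathbf F = \lambda(t_0)\cdot \mathbf F$ for the limiting group element, giving $\mathbf F_0 \in G\cdot \mathbf F$. Applying this for every one-parameter subgroup and invoking the one-parameter-subgroup criterion for closed orbits of reductive groups acting on affine varieties then yields that $G\cdot \mathbf F$ is closed.

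**Where the difficulty lies.** The main obstacle is Step two: turning the abstract invertibility hypothesis "$\det F_{abc}^d \neq 0$ on $X$" into a statement that blocks all destabilizing one-parameter subgroups. I would need to check that the weights by which each matrix entry $\tenj{F_{abc}^d,\dots}$ scales under $\lambda$ are sufficiently coupled — through the identity normalizations \eqref{eq:F1} and the associativity/pentagon relations \eqref{eq:F3} — that no nontrivial $\lambda$ can send every entry to a finite limit while strictly decreasing some weight. A cleaner alternative, which I would pursue first, is to find explicitly a $G$-invariant regular function separating $G\cdot\mathbf F$ from any boundary point: since each $F_{abc}^d$ is invertible, ratios of products of its entries give $G$-invariant rational functions (these are exactly the invariant monomials of Theorem~\ref{thm:main2}), and a point in $\overline{G\cdot\mathbf F}\setminus G\cdot\mathbf F$ would have to agree with $\mathbf F$ on all of them while lying on the boundary, which the invertibility and unit-normalization constraints should rule out. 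I expect the honest proof to combine the invertibility of the $F$-blocks with the rigidity condition \eqref{eq:F2} to exhibit enough invariants (or enough weight coupling) to close off every orbit.
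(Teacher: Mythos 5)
Your overall framework---the Hilbert--Mumford/Kempf criterion for closedness of orbits of a reductive group acting on an affine variety---is legitimate, but the proof has a genuine gap at exactly the step you flag, and both of your proposed ways of closing it fail. First, the deduction in your third step is invalid: triviality of the determinant character of each block does not force the individual entry weights to vanish. In the multiplicity-free case the weight of the $(e,f)$ entry of the block $F_{abc}^d$ has the form $r_e+c_f$, and one can have $\sum_e r_e+\sum_f c_f=0$ while some $r_e+c_f>0$; entries of negative weight are then forced to vanish at $\mathbf F$ for the limit to exist, but the limit can remain invertible. Concretely, a $2\times 2$ block with weight matrix $\left(\begin{smallmatrix}1&0\\0&-1\end{smallmatrix}\right)$ and value $\left(\begin{smallmatrix}a&b\\c&0\end{smallmatrix}\right)$ has limit $\left(\begin{smallmatrix}0&b\\c&0\end{smallmatrix}\right)$, still invertible but with strictly more zeros---and since $F^{-1}(0)$ is a gauge invariant, such a limit would lie outside the orbit. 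Ruling this out requires coupling the weights across blocks through the pentagon equations \eqref{eq:F3}, which is precisely the step you acknowledge but never carry out; it is essentially equivalent to the statement being proved. Second, your fallback---finding a $G$-invariant regular function separating $G\cdot\mathbf F$ from a boundary point---cannot work even in principle: an invariant regular function is continuous and constant on the orbit, hence constant on $\overline{G\cdot\mathbf F}$, so it takes the same value at $\mathbf F$ and at every boundary point. This inseparability is exactly the phenomenon GIT quotients must contend with, and the invariant monomials of Theorem \ref{thm:main2} are only rational, not regular, so their being undefined at a boundary point is not by itself a contradiction.

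The paper proves the lemma by an entirely different and much shorter argument, which your proposal misses: the action map is a morphism of varieties, so each orbit closure is the orbit together with orbits of strictly smaller dimension, and orbits of minimal dimension are closed; since the stabilizer of a point of $X$ in $G$ is independent of the point (a cited fact about gauge actions), all orbits have the same dimension, hence no orbit can lie in the boundary of another, and every orbit is closed. The one nontrivial input---constancy of stabilizers---appears nowhere in your proposal, yet it is also what would rescue your approach: a destabilizing one-parameter subgroup with limit in $X$ would place an orbit of strictly smaller dimension in $\overline{G\cdot\mathbf F}$, contradicting equality of orbit dimensions.
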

\begin{proof}
The action map $\cdot:G\times X \rightarrow X$ is a morphism of varieties.
Then by \cite[8.3]{MR0396773}, in the Zariski topology one has for each orbit $G\cdot\mathbf F$ one has that
\begin{align*}
\overline{G\cdot\mathbf F}&=G\cdot\mathbf F \bigcup_{\mathbf E\in (G\cdot\mathbf F)'} G\cdot\mathbf E  
\end{align*}
and orbits of minimal dimension are closed.
The stabilizer of $\mathbf F$ in $G$ is independent of $\mathbf F$\cite[Section 4.1]{2013arXiv1305.2229D}, so all orbits are of the same dimension\cite[25.1.3]{MR2146652}, and thus closed.  
\end{proof}

\begin{corollary}
The orbits of $G$ in $X$ are irreducible.
\end{corollary}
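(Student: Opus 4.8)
The plan is to derive irreducibility directly from the irreducibility of the acting group $G$, so that closedness (Lemma~\ref{lem:closed}) is not actually needed. First I would establish that $G=\prod_{\gamma_{ab}^c\in\Gamma}GL_{N_{ab}^c}(k)$ is an irreducible variety. Each factor $GL_n(k)$ is the principal open set $\{\det\neq 0\}$ inside the affine space $Mat_{n\times n}(k)$; since a nonempty open subset of an irreducible space is irreducible, and $Mat_{n\times n}(k)\cong k^{n^2}$ is irreducible, each $GL_n(k)$ is irreducible. A finite product of irreducible varieties over the algebraically closed field $k$ is again irreducible, so $G$ is irreducible.

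Next I would fix $\mathbf F\in X$ and consider the orbit morphism $\mu_{\mathbf F}\colon G\to X$ defined by $\mu_{\mathbf F}(g)=g\cdot\mathbf F$. By Definition~\ref{def:gaugetransform} the action is algebraic---the entries of $g\cdot F$ in \eq{gaugemult} are polynomials in the matrix entries of $g$ and of $g^{-1}$---so $\mu_{\mathbf F}$ is a morphism of varieties, and its image is exactly the orbit $G\cdot\mathbf F$. I would then invoke the standard topological fact that the image of an irreducible space under a continuous map is irreducible: if $G\cdot\mathbf F=A\cup B$ with $A,B$ closed in the orbit, then $G=\mu_{\mathbf F}^{-1}(A)\cup\mu_{\mathbf F}^{-1}(B)$ writes the irreducible $G$ as a union of two closed sets, forcing $\mu_{\mathbf F}^{-1}(A)=G$ or $\mu_{\mathbf F}^{-1}(B)=G$, and hence $A=G\cdot\mathbf F$ or $B=G\cdot\mathbf F$. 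Thus every orbit is irreducible.

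There is essentially no obstacle in this argument; the only step that deserves a word of care is the passage from connectedness to irreducibility for the factors $GL_n$, which is legitimate here because over an algebraically closed field $GL_n$ is smooth and connected, equivalently a dense open subvariety of affine space. It is worth remarking that this proof uses only the connectedness (irreducibility) of the gauge group $G$ and not the closedness of orbits from the preceding lemma; closedness is the genuinely substantive geometric input required later for the quotient constructions of Theorem~\ref{thm:main3}, whereas irreducibility is immediate from the product shape of $G$.
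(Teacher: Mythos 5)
Your proof is correct, but it takes a genuinely different route from the paper's. The paper disposes of this corollary in one line by citing Humphreys \cite[8.2]{MR0396773} together with connectedness of $G$ and Ocneanu rigidity; unpacked, that argument runs through the global structure of $X$: the connected group $G$ stabilizes each irreducible component of $X$, Ocneanu rigidity gives finitely many orbits, and closedness of orbits (Lemma~\ref{lem:closed}) then forces each irreducible component --- being a finite union of disjoint closed orbits --- to be a single orbit, so orbits coincide with the irreducible components of $X$. You instead argue pointwise: $G$ is irreducible because it is a product of the irreducible varieties $GL_n(k)$ (principal opens in affine space), and each orbit is the image of the irreducible $G$ under the orbit morphism $g\mapsto g\cdot\mathbf F$, hence irreducible. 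Your version is more elementary and more robust --- it needs neither the finiteness of the orbit set nor the closedness of orbits, and it applies verbatim to any algebraic action of an irreducible group --- and your closing remark that closedness is logically superfluous for this corollary is accurate. What the paper's route buys in exchange is a slightly stronger structural conclusion, namely the identification of the orbits with the irreducible components of $X$, which meshes with how the quotient $Y$ is analyzed later. One small quibble: your aside about passing ``from connectedness to irreducibility'' for $GL_n$ is unnecessary, since your own argument establishes irreducibility of each factor directly and never invokes connectedness at all.
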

\begin{proof}
Follows from \cite[8.2, (d)]{MR0396773}, connectedness of $G$, and Ocneanu rigidity.
\end{proof}

\begin{proposition}\label{prop:gaugequotient}
Let $X$ be a fusion scheme and $G$ the associated gauge group acting as in Definition \ref{def:gaugetransform}.
Then there exists an algebraic scheme $Y$ and a surjective morphism of schemes $$\psi:X\rightarrow Y$$ such that 
\begin{enumerate}
	\item $\mathcal O_Y(Y)=\mathcal O_X(X)^G$,
	\item for every point $y\in Y$, $\psi^{-1}(y)$ is an orbit and $\mathcal O_{Y,y}=\mathcal O_X(\psi^{-1}(y))^G$.
\end{enumerate}
\end{proposition}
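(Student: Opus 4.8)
The plan is to construct $Y$ as the affine GIT quotient $\operatorname{Spec}(\mathcal O_X(X)^G)$ and then promote the resulting categorical quotient to a geometric one using the closedness of orbits from Lemma \ref{lem:closed}. The structural input that makes Mumford's machinery apply is that the gauge group $G=\prod_{\gamma_{ab}^c\in\Gamma}GL_{N_{ab}^c}(k)$ is a product of general linear groups, hence a connected, linearly reductive algebraic group over the characteristic-zero field $k$. Since $X$ is an affine algebraic set with coordinate ring $A:=\mathcal O_X(X)$ carrying the algebraic $G$-action of Definition \ref{def:gaugetransform}, the Hilbert--Nagata finite generation theorem guarantees that $A^G$ is a finitely generated $k$-algebra. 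I would therefore set $Y:=\operatorname{Spec}(A^G)$, an algebraic scheme, and let $\psi:X\to Y$ be the morphism induced by the inclusion $A^G\hookrightarrow A$.

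Next I would invoke the foundational results of \cite{MR1304906} (Theorem 1.1 and Amplification 1.3) to conclude that $(\psi,Y)$ is a good (categorical) quotient of $X$ by $G$: the morphism $\psi$ is surjective, $G$-invariant, affine, and satisfies $\mathcal O_Y=(\psi_*\mathcal O_X)^G$ as sheaves on $Y$. Taking global sections immediately yields $\mathcal O_Y(Y)=A^G=\mathcal O_X(X)^G$, which is assertion (1). The additional property I would extract here is that a good quotient by a reductive group separates disjoint closed $G$-invariant subsets of $X$; in particular $\psi$ separates any two distinct closed orbits.

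The key step, where the specific geometry enters, is upgrading this categorical quotient to a geometric one, i.e.\ showing each fiber $\psi^{-1}(y)$ is a single orbit. For a reductive group acting on an affine variety, $\psi(\mathbf F)=\psi(\mathbf F')$ holds precisely when the orbit closures $\overline{G\cdot\mathbf F}$ and $\overline{G\cdot\mathbf F'}$ meet. By Lemma \ref{lem:closed} every orbit is already closed, so orbit closures coincide with orbits, forcing $G\cdot\mathbf F=G\cdot\mathbf F'$; hence each fiber is exactly one orbit. Combined with the good-quotient properties, this shows $(\psi,Y)$ satisfies Mumford's Definition 0.6 of a geometric quotient and establishes the first half of (2).

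For the stalk identity $\mathcal O_{Y,y}=\mathcal O_X(\psi^{-1}(y))^G$ in (2), I would localize the sheaf equality $\mathcal O_Y=(\psi_*\mathcal O_X)^G$ at $y$: since $\psi$ is affine with $\psi^{-1}(y)$ a single closed orbit, the localization of $A^G$ at the prime corresponding to $y$ should be identified with the $G$-invariants of the regular functions on that fiber. I expect this last verification to be the main obstacle, because passing from invariants of global functions to invariants of functions on one fiber requires that forming $G$-invariants commute with localization along $A^G$; this in turn relies on the exactness of the Reynolds operator (equivalently, linear reductivity of $G$ in characteristic zero), which I would use to interchange $(-)^G$ with the relevant localization.
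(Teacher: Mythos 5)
Your proposal is correct and follows essentially the same route as the paper: both construct $Y=\operatorname{Spec}(\mathcal O_X(X)^G)$ via \cite[Theorem 1.1]{MR1304906} using the reductivity of $G$ to obtain assertion (1), and both invoke \cite[Amplification 1.3]{MR1304906} together with the closedness of orbits (Lemma \ref{lem:closed}) to obtain assertion (2). The paper's proof is terser—it attributes the fiber-is-an-orbit statement and the stalk identity entirely to Amplification 1.3 rather than unwinding the separation-of-closed-orbits and Reynolds-operator arguments as you do—but the logical skeleton is identical.
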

\begin{proof}
The construction in \cite[Theorem 1.1]{MR1304906} constructs $$Y=Spec(\mathcal O_X(X)^G)$$ and $\psi$ which satisfy (1) whenever $G$ is a reductive group acting algebraically on $X$. 
In this case, $G$ is a direct product of reductive groups, and thus reductive, and the action is clearly algebraic. 
In these circumstances, \cite[Amplification 1.3]{MR1304906} asserts that (2) holds if and only if orbits are closed, which has already been shown.
\end{proof}

\begin{remark}
Since, in our case, $Y$ is an algebraic scheme, up to isomorphism of schemes, it may be represented as an algebraic set. 
However, we wish to use information from the sheaf structure in Proposition \ref{prop:gaugequotient} which is not isomorphism-invariant and without knowing the orbits beforehand there is no good way of choosing an isomorphism which preserves the sheaf structure on the nose. 
\end{remark}

We can now move on to proving the first half of Theorem \ref{thm:main1}:

\begin{proposition}\label{prop:gorbs}
Let $\mathscr N\geq 2$ be the number of orbits of $G$ in $X$.
Then there exist $\mathscr N-1$ $G$-invariant regular functions $f_1,\ldots f_{\mathscr N-1}$ on $X$ such that for all $\mathbf F, \mathbf F'\in X$ $\mathbf F\sim \mathbf F'$ if and only if
\begin{displaymath}
f_i(\mathbf F)=f_i(\mathbf F')
\end{displaymath}
for $i=1,\ldots, P$.
\end{proposition}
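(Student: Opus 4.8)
The plan is to push the problem forward along the geometric quotient and then separate finitely many points by the Chinese Remainder Theorem. First, I would fix the quotient $\psi\colon X\to Y$ of Proposition \ref{prop:gaugequotient}, so that $Y=Spec(A)$ with $A:=\mathcal O_X(X)^G$ and, by part (2) of that proposition together with surjectivity of $\psi$, the points of $Y$ are in bijection with the $G$-orbits of $X$ via $\psi$. Since $\mathscr N$ is the (finite) number of orbits and $\mathscr N\geq 2$, the scheme $Y$ consists of exactly $\mathscr N$ distinct points $y_1,\dots,y_{\mathscr N}$, and $\mathbf F\sim\mathbf F'$ holds precisely when $\psi(\mathbf F)=\psi(\mathbf F')$.

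Next I would verify that each $y_i$ is a closed point with residue field $k$. Because $Y$ is an algebraic scheme it is of finite type over $Spec(k)$, so $A$ is a finitely generated $k$-algebra; as $k$ is algebraically closed (hence infinite), any positive-dimensional finite-type $k$-scheme would carry infinitely many closed points, which is impossible here. Thus $\dim Y=0$, every prime of $A$ is maximal, and each $y_i$ corresponds to a maximal ideal $\mathfrak m_i\subset A$ with $A/\mathfrak m_i\cong k$ by the Nullstellensatz.

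The key step is separation via the Chinese Remainder Theorem. The ideals $\mathfrak m_1,\dots,\mathfrak m_{\mathscr N}$ are distinct and maximal, hence pairwise comaximal, so the evaluation map $A\to\prod_{i=1}^{\mathscr N}A/\mathfrak m_i\cong k^{\mathscr N}$ is surjective. For each $j=2,\dots,\mathscr N$ I would pick $f_{j-1}\in A$ whose image is the standard basis vector $e_j$, i.e.\ $f_{j-1}(y_i)=\delta_{ij}$. Under the identification $\mathcal O_Y(Y)=\mathcal O_X(X)^G$ of Proposition \ref{prop:gaugequotient}(1), each $f_{j-1}$ is a $G$-invariant regular function on $X$, constant on orbits, with value $\delta_{ij}$ on the orbit indexed by $y_i$. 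This produces exactly $\mathscr N-1$ functions.

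It then remains to check the stated equivalence. If $\mathbf F\sim\mathbf F'$ then $\psi(\mathbf F)=\psi(\mathbf F')$ and every $f_i$ agrees on the two points. Conversely, suppose $\psi(\mathbf F)=y_a$ and $\psi(\mathbf F')=y_b$ with $a\neq b$; since the two indices cannot both equal $1$, at least one, which after relabeling I take to be $a$, is $\geq 2$, and then $f_{a-1}(\mathbf F)=1\neq 0=f_{a-1}(\mathbf F')$, so the functions detect the inequivalence. Hence $\mathbf F\sim\mathbf F'$ if and only if $f_i(\mathbf F)=f_i(\mathbf F')$ for $i=1,\dots,\mathscr N-1$. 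The main obstacle I anticipate is precisely the middle step: confirming that the $\mathscr N$ orbit-points are genuine closed points with residue field $k$, so that the maximal ideals are comaximal and CRT applies. This is where the finite-type hypothesis on $Y$ from Proposition \ref{prop:gaugequotient} and the finiteness of the orbit set (Ocneanu rigidity) are essential; once they are in place the remainder is routine.
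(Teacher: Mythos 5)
Your proof is correct, and it reaches the same endpoint as the paper --- a family of $\mathscr N-1$ orbit-indicator functions, with the $\mathscr N$-th orbit characterized by all indicators vanishing --- but by a different technical route. The paper stays on the $X$ side: it invokes Mumford's separation result \cite[Corollary 1.2]{MR1304906} to get, for any two distinct (closed, by Lemma \ref{lem:closed}) orbits, an invariant function taking value $1$ on one and $0$ on the other, and then glues/extends to make each $f_i$ an indicator of a single orbit --- a step the paper treats rather informally. You instead work entirely in the coordinate ring $A=\mathcal O_X(X)^G$ of the quotient $Y$ from Proposition \ref{prop:gaugequotient}: the point--orbit bijection plus finiteness of the orbit set (Ocneanu rigidity) forces $\dim Y=0$, so $A$ is Artinian with $\mathscr N$ maximal ideals of residue field $k$ (Nullstellensatz, using that $Y$ is of finite type), and the Chinese Remainder Theorem hands you the idempotent-like indicators directly. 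What your route buys is precisely a rigorous replacement for the paper's gluing step, and it is insensitive to the fact that $A$ may be non-reduced (the paper itself notes, in the Fibonacci example, that the defining ideal need not be radical); it also proves en route the zero-dimensionality/closed-points claim that the paper only asserts later in Theorem \ref{thm:gaugeinvs}. What the paper's route buys is brevity --- it black-boxes the separation of orbits into a single citation and never needs the residue-field or finite-type analysis. One small point to keep explicit in your write-up: the identification of set-theoretic fibers of $\psi$ with orbits (your first paragraph) is what guarantees $Y$ has exactly $\mathscr N$ points, and it is worth noting that this uses that fibers over \emph{all} scheme points are orbits, so the two partitions of $X$ coincide.
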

\begin{proof}
By \cite[Corollary 1.2]{MR1304906}, for any two distinct orbits $G\cdot\mathbf F$ and $G\cdot\mathbf F'$, there exists a $G$-invariant regular function $f:X\rightarrow k$ such that $f(G\cdot\mathbf F)=\{1\}$ and $f(G\cdot\mathbf F')=\{0\}$.
Since the orbits are disjoint, $f$ is the gluing of a function on $G\cdot\mathbf F$ with a function on $G\cdot\mathbf F'$.
One then extends $f$ to a function such that $f(G\cdot\mathbf F)=\{1\}$ and $f(G\cdot\mathbf F')=\{0\}$ on all orbits $G\cdot\mathbf F'\neq G\cdot\mathbf F$. 
We only need $\mathscr N-1$ functions since for the $\mathscr N$-th orbit, $f_i(G\cdot\mathbf F_{\mathscr N})=0$ for all $f_i$ and by construction this is the only orbit for which that is the case.

Now assume that $\mathbf F_p\sim \mathbf F_q$.
The $f_i$ are defined on all of $X$ and are invariant under the $G$-action, so $f_i(\mathbf F_p)=f_i(\mathbf F_q)$ for all $f_i$.
\end{proof}

We now turn our attention to the action of $Aut(R,B)$ on $X$.
\begin{definition}\label{def:autaction}
Define the action of $\rho\in Aut(R,B)$ on $X$ via
\begin{equation}\label{eq:aut}
(\rho \cdot F)_{abc}^{d}:=F_{\rho^{-1}(a)\rho^{-1}(b)\rho^{-1}(c)}^{\rho^{-1}(d)}.
\end{equation}
Since \eq{F1},\eq{F2}, and \eq{F3} are symmetric under the action of $\rho$,  $(\rho \cdot \mathbf F)\in X$. 
Call $\mathbf F,\mathbf F'\in X$ {\em permutation equivalent} if $(\rho \cdot \mathbf F)=\mathbf F'$ for some $\rho \in Aut(R,B)$.
\end{definition}

For gauge equivalent $\mathbf F$ and $\mathbf F^g$, the triple $\mathcal F=(Id,Id,g)$ forms an object fixing monoidal functor $\mathcal F:\mathcal C(\mathbf F)\rightarrow \mathcal C(g\cdot \mathbf F)$.
$\mathcal G_\rho=(\rho,Id,Id)$ defines an object-permuting equivalence functor $\mathcal G_\rho:\mathcal C(\mathbf F) \rightarrow \mathcal C(\rho\cdot \mathbf F)$. 
For any $\mathbf F,\mathbf F^{eq}\in X$ such that there exists a monoidal equivalence $\mathcal H:\mathcal C(\mathbf F)\rightarrow \mathcal C(\mathbf F^{eq})$, $\mathcal H$ factors into the composition of a permutation and gauge equivalence.

$Aut(R,B)$ acts on $G$ by permuting labels and so we can define the action of $Aut(R,B)\ltimes G$ on $X$.
It was shown in \cite{2013arXiv1305.2229D} that for a fusion category $\mathcal C$ associated to $X$, $Aut(R,B)\ltimes G \cong Aut^\otimes(\mathcal C)$ - the group of monoidal auto-equivalences of $\mathcal C$.
$Y$ inherits an action by $Aut(R,B)$ and since $Y$ has finitely many points, there are necessarily finitely many orbits of $Aut(R,B)$ in $Y$ and they are necessarily closed.
We can now prove the second half of Theorem \ref{thm:main3}:

\begin{proposition}\label{prop:monquotient}
Let $X$ be a fusion scheme and $Y$ be as in Proposition \ref{prop:gaugequotient}.
Then there exists an algebraic scheme $Z$ and a surjective morphism of schemes $$\eta:Y\rightarrow Z$$ such that 
\begin{enumerate}
	\item $\mathcal O_Z(Z)=\mathcal O_{Y}(Y)^{Aut(R,B)}=\mathcal O_{X}(X)^{Aut(R,B)\ltimes G}$,
	\item for every point $z$ in $Z$, $\eta^{-1}(z)$ and $(\psi\circ \eta)^{-1}(z)$ are orbits in $Y$ and $X$ respectively and $$\mathcal O_{Z,z}=\mathcal O_{Y}(\eta^{-1}(z))^{Aut(R,B)}=\mathcal O_{X}((\psi\circ\eta)^{-1}(z))^{Aut(R,B)\ltimes G}.$$ 
\end{enumerate}
\end{proposition}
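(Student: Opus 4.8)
The plan is to mirror the proof of Proposition \ref{prop:gaugequotient}, applying Mumford's quotient construction a second time, now to the finite group $Aut(R,B)$ acting on the affine scheme $Y=Spec(\mathcal O_X(X)^G)$. First I would observe that $Aut(R,B)$ is finite, hence linearly reductive over the characteristic-zero field $k$, and that it acts algebraically on $Y$: since $Aut(R,B)$ normalizes $G$ inside $Aut(R,B)\ltimes G$, its action on $\mathcal O_X(X)$ preserves the invariant subring $\mathcal O_X(X)^G=\mathcal O_Y(Y)$, which is exactly the action described in the paragraph preceding the proposition. With reductivity in hand, \cite[Theorem 1.1]{MR1304906} produces $$Z=Spec(\mathcal O_Y(Y)^{Aut(R,B)})$$ together with a surjective morphism $\eta:Y\rightarrow Z$ satisfying the first equality of part (1).

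Next I would establish the second equality in (1), namely $\mathcal O_Y(Y)^{Aut(R,B)}=\mathcal O_X(X)^{Aut(R,B)\ltimes G}$. This is the standard identity for invariants of a semidirect product: writing $A=\mathcal O_X(X)$, $N=G$, and $H=Aut(R,B)$, an element of $A$ is fixed by $H\ltimes N$ if and only if it is fixed by $N$ and by $H$, so $A^{H\ltimes N}=(A^N)^H$, where the $H$-action on $A^N$ is well defined precisely because $H$ normalizes $N$. Since $A^N=\mathcal O_Y(Y)$ by Proposition \ref{prop:gaugequotient}, this yields the claim.

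For part (2) I would invoke \cite[Amplification 1.3]{MR1304906}, which guarantees the fiber and local-ring statements for $\eta$ exactly when the orbits of $Aut(R,B)$ in $Y$ are closed; this closedness is already noted in the text, being automatic since $Y$ has finitely many points. The genuinely new content is identifying the fibers of the composite: for $z\in Z$ I must show $(\psi\circ\eta)^{-1}(z)=\psi^{-1}(\eta^{-1}(z))$ is a single $Aut(R,B)\ltimes G$-orbit in $X$. Using that $\psi$ is $Aut(R,B)$-equivariant---which follows because its comorphism is the $H$-equivariant inclusion $A^N\hookrightarrow A$---one computes $\psi^{-1}(Aut(R,B)\cdot\psi(x))=(Aut(R,B)\ltimes G)\cdot x$: a point $x'$ maps into the $Aut(R,B)$-orbit of $\psi(x)$ precisely when $\psi(x')=\psi(\rho\cdot x)$ for some $\rho$, hence when $x'=g\cdot\rho\cdot x$ for some $g\in G$, and the semidirect relation $g\rho=\rho g'$ rewrites this as an element of the full orbit.

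The hard part will be the bookkeeping between the two quotients rather than any single deep step: verifying that $\psi$ descends the $Aut(R,B)$-action equivariantly, and that the fiberwise local rings $\mathcal O_{Z,z}$ agree whether computed through $Y$ as $\mathcal O_Y(\eta^{-1}(z))^{Aut(R,B)}$ or directly through $X$ as $\mathcal O_X((\psi\circ\eta)^{-1}(z))^{Aut(R,B)\ltimes G}$. The latter requires that taking $Aut(R,B)$-invariants commutes with the orbit identification above; this follows from the semidirect-product invariant identity applied fiberwise together with the closed-orbit hypotheses, but it is the step most likely to demand care.
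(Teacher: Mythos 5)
Your proposal is correct and takes essentially the same route as the paper: the paper likewise forms $Z=Spec(\mathcal O_Y(Y)^{Aut(R,B)\ltimes G})$ (which equals your $Spec(\mathcal O_Y(Y)^{Aut(R,B)})$ since $G$ acts trivially on $\mathcal O_Y(Y)$) and runs the affine GIT quotient construction a second time, citing \cite[Example 6.1]{MR2004511} for part (1) and disposing of part (2) by noting it follows from composing $\psi$ and $\eta$ in the category of $k$-schemes. Your write-up merely makes explicit the steps the paper delegates to citations---reductivity of the finite group $Aut(R,B)$, the semidirect-product identity $A^{H\ltimes N}=(A^N)^H$, and the identification of fibers of the composite via $Aut(R,B)$-equivariance of $\psi$---all of which are sound.
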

\begin{proof}
Similar to Proposition \ref{prop:gaugequotient} the construction $$Z=Spec(\mathcal O_Y(Y))^{Aut(R,B)\ltimes G}$$ is used.
The first statement is always true \cite[Example 6.1]{MR2004511} and the second follows by composition of $\psi \circ \eta$ in the category of $k$-schemes. 
\end{proof}

The second half of Theorem $\ref{thm:main1}$ then follows by the same logic as Proposition \ref{prop:gorbs}.
\begin{corollary}
Let $\mathbf F\in X$ and $z = (\psi \circ \eta)(\mathbf F)$
Then up to monoidal equivalence, the ring $\mathcal O_{Z,z}$ is a complete invariant of $\mathcal C(\mathbf F)$.
\end{corollary}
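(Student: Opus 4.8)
The plan is to prove the statement in two halves --- that $\mathcal{O}_{Z,z}$ is well-defined as a function of the monoidal equivalence class of $\mathcal{C}(\mathbf{F})$ (\emph{invariance}), and that it determines this class (\emph{completeness}) --- both of which reduce to identifying the fibers of $\psi\circ\eta$ with monoidal equivalence classes. First I would record the key bijection. By Proposition \ref{prop:monquotient}(2), for each $z\in Z$ the fiber $(\psi\circ\eta)^{-1}(z)$ is a single orbit of $Aut(R,B)\ltimes G$ in $X$. The discussion preceding that proposition shows that two points of $X$ lie in a common $Aut(R,B)\ltimes G$-orbit if and only if their associated fusion categories are monoidally equivalent: any monoidal equivalence $\mathcal{H}\colon\mathcal{C}(\mathbf{F})\to\mathcal{C}(\mathbf{F}')$ factors as a permutation equivalence followed by a gauge equivalence (via \cite{2013arXiv1305.2229D}), while conversely the functors $\mathcal{F}=(Id,Id,g)$ and $\mathcal{G}_\rho=(\rho,Id,Id)$ realize gauge and permutation equivalences as monoidal equivalences. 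Consequently $\mathbf{F}\mapsto z=(\psi\circ\eta)(\mathbf{F})$ descends to a bijection between monoidal equivalence classes of categories Grothendieck equivalent to $(R,B)$ and points of $Z$.

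For invariance, suppose $\mathcal{C}(\mathbf{F})$ and $\mathcal{C}(\mathbf{F}')$ are monoidally equivalent. By the bijection above, $\mathbf{F}$ and $\mathbf{F}'$ map to the same point $z\in Z$, so the stalks coincide; in particular $\mathcal{O}_{Z,z}$ depends only on the monoidal class and not on the chosen representative. Moreover, Proposition \ref{prop:monquotient}(2) identifies this stalk intrinsically as $\mathcal{O}_X((\psi\circ\eta)^{-1}(z))^{Aut(R,B)\ltimes G}$, i.e. as the ring of $Aut(R,B)\ltimes G$-invariant regular functions on the orbit, which makes the independence of representative manifest.

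For completeness I would argue that $z$ is recovered from the stalk. Since $X$ has only finitely many $G$-orbits by Ocneanu rigidity, $Y$ and hence $Z$ are finite schemes, so $\mathcal{O}_Z(Z)$ is Artinian and decomposes as the product $\prod_{z\in Z}\mathcal{O}_{Z,z}$ of its localizations. Regarding $\mathcal{O}_{Z,z}$ as the germ of $Z$ at $z$ --- that is, together with its canonical localization map from $\mathcal{O}_Z(Z)$ --- pins down the point $z$, and by the bijection of the first step $z$ determines the monoidal equivalence class of $\mathcal{C}(\mathbf{F})$. Combining the two halves, the germ $\mathcal{O}_{Z,z}$ is a complete monoidal invariant.

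The main obstacle is the completeness direction, and specifically the sense in which the stalk is taken. As an abstract $k$-algebra the local ring need not distinguish points --- for instance every reduced point has stalk $k$ --- so the statement must be read with $\mathcal{O}_{Z,z}$ carrying its structure as a stalk of $Z$, equivalently as a distinguished localization of the global invariant ring $\mathcal{O}_X(X)^{Aut(R,B)\ltimes G}$. With that reading the substantive input is entirely the identification of fibers of $\psi\circ\eta$ with $Aut(R,B)\ltimes G$-orbits and of those orbits with monoidal equivalence classes; the remaining scheme-theoretic bookkeeping is routine.
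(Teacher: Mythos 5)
Your proposal is correct and follows essentially the same route as the paper, which gives no separate proof: the corollary is read off directly from Proposition \ref{prop:monquotient}(2) (fibers of $\psi\circ\eta$ are $Aut(R,B)\ltimes G$-orbits, with $\mathcal O_{Z,z}$ the ring of invariant regular functions on the orbit) together with the preceding discussion identifying those orbits with monoidal equivalence classes via the factorization of any monoidal equivalence into a permutation equivalence and a gauge equivalence. Your closing caveat --- that $\mathcal O_{Z,z}$ must be taken as the stalk of $Z$ at $z$, i.e.\ as a distinguished localization of $\mathcal O_X(X)^{Aut(R,B)\ltimes G}$, rather than as an abstract $k$-algebra --- is a precision the paper leaves implicit, and it matches how the paper actually uses these stalks (through evaluations of invariant functions).
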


\subsection{Invariants for multiplicity free fusion schemes}\label{ss:ifs}
Assuming that $(R,B)$ is multiplicity free, the stronger statements in \ref{thm:main2} may be proved.
Recall that multiplicity free means $N_{ab}^c\in \{0,1\}$ for all $\gamma_{ab}^c\in \Gamma$. 
$G$ is the set of functions $g:\Gamma \rightarrow k^\times$.
Define $g_{ab}^c=g(\gamma_{ab}^c)$ and $g^{ab}_c=(g(\gamma_{ab}^c))^{-1}$.
$G$ acts on $X$ via
\begin{equation}\label{eq:gauge}
\tenj{(g\cdot F)_{abc}^d,1,e,1,1,f,1}=(g_{ab}^e)(g_{ec}^d) \tenj{F_{abc}^d,1,e,1,1,f,1}(g^{bc}_f)(g^{af}_d).
\end{equation}
which is just what one obtains from \eq{gaugemult} in the multiplicity free case.
Note that since $g_{ab}^c\neq 0$, $\tenj{(F^{g})_{abc}^d, 1,e,1,1,f,1} = 0$ if and only if $\tenj{F_{abc}^d,1,e,1,1,f,1}=0$. 
Thus, the set $F^{-1}(0)$ is a gauge invariant.

For any set $S$, define $A(S)$ to be the free Abelian group with basis $S$. 
For any $g\in G$, define $g_A:A(\Gamma)\rightarrow k^\times$ the extension of $g$ to a group homomorphism.
Rewriting \eq{gauge} reveals that $\mathbf F$ and $\mathbf F'$ are equivalent if and only if there exists a function $g_A:A(\Gamma)\rightarrow k^\times$ such that for all $\{\gamma_{ab}^c,\gamma_{ec}^d,\gamma_{bc}^f,\gamma_{af}^d\}\subset \Gamma$,
\begin{equation}\label{eq:tobegauge}
g_A(\gamma_{ab}^e \gamma_{ec}^d (\gamma_{bc}^f \gamma_{af}^d)^{-1})\tenj{F_{abc}^d,1,e,1,1,f,1}=\tenj{(F')_{abc}^d,1,e,1,1,f,1}
\end{equation}

Define $t:\Phi \rightarrow A(\Gamma)$ such that
\begin{displaymath}
t(\tenj{\Phi_{abc}^d,1,e,1,1,f,1}))=\gamma_{ab}^e\gamma_{ec}^d (\gamma_{bc}^f \gamma_{af}^d)^{-1}
\end{displaymath}
and $t_A:A(\Phi)\rightarrow A(\Gamma)$ its extension to to a group homomorphism.
Then given a gauge transformation $g\in G$, the action of $g$ on $\mathbf F\in X$ can then be rewritten as
\begin{equation}\label{eq:gaugeab}
\tenj{(F^g)_{abc}^{d},1,e,1,1,f,1}=(t\circ g_A)(\tenj{\Phi_{abc}^d,1,e,1,1,f,1})\tenj{F_{abc}^{d},1,e,1,1,f,1}
\end{equation}

\begin{lemma}\label{lem:big}
Let $\mathbf F, \mathbf F'\in X$.
Then $\mathbf F \sim \mathbf F'$ if and only if there exists a homomorphism $v:im(t_A)\rightarrow k^\times$ such that $ \phi \in \Phi$,
\begin{equation}\label{eq:phi}
(t\circ v)(\phi)F(\phi)=F'(\phi)
\end{equation}
\end{lemma}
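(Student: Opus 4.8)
The plan is to prove the two implications separately, transporting between two descriptions of the gauge action. Recall from \eq{gaugeab} that for $g\in G$ the action reads $F^g(\phi)=(t\circ g_A)(\phi)F(\phi)$ for every $\phi\in\Phi$, where $g_A\colon A(\Gamma)\to k^\times$ is the homomorphism extending $g$ and $t(\phi)=t_A(\phi)$ lies in the subgroup $im(t_A)\leq A(\Gamma)$. Thus $\mathbf F\sim\mathbf F'$ is equivalent to the existence of \emph{some} homomorphism $A(\Gamma)\to k^\times$ whose composite with $t$ sends $F$ to $F'$, and the content of the lemma is that it is enough to find such a homomorphism on the subgroup $im(t_A)$ alone. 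The two directions will therefore amount to restriction and extension of homomorphisms.

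For the forward implication, I would suppose $\mathbf F\sim\mathbf F'$, so that $F'=g\cdot F$ for some $g\in G$, and simply set $v:=g_A|_{im(t_A)}$, the restriction of $g_A$ to the subgroup $im(t_A)$. This is again a homomorphism into $k^\times$, and since $t(\phi)\in im(t_A)$ for each $\phi$, \eq{gaugeab} immediately gives $(t\circ v)(\phi)F(\phi)=(t\circ g_A)(\phi)F(\phi)=F'(\phi)$. This half is pure bookkeeping.

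The substantive direction is the converse. Given $v\colon im(t_A)\to k^\times$ satisfying \eq{phi}, the goal is to manufacture an honest $g\in G$, i.e.\ a homomorphism defined on \emph{all} of $A(\Gamma)$. The key step is to extend $v$ to a homomorphism $\tilde v\colon A(\Gamma)\to k^\times$, and I expect this to be the only point requiring more than routine manipulation. The plan is to deduce it from the divisibility of $k^\times$: since $k$ is algebraically closed, for every $a\in k^\times$ and every $n\geq 1$ the polynomial $x^n-a$ has a (necessarily nonzero) root, so $k^\times$ is divisible and hence an injective $\Z$-module. Injectivity applied to the inclusion $im(t_A)\hookrightarrow A(\Gamma)$ then yields the desired extension $\tilde v$. (One could instead argue by hand using the stacked-basis theorem for the finitely generated free group $A(\Gamma)$ together with a choice of suitable roots in $k^\times$, but the injectivity statement is cleaner.)

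Finally, with $\tilde v$ in hand I would define $g\in G$ by $g(\gamma)=\tilde v(\gamma)$ on each basis element $\gamma\in\Gamma$. Then $g_A=\tilde v$, since both are homomorphisms on $A(\Gamma)$ agreeing on the basis, so for every $\phi\in\Phi$ one gets $(t\circ g_A)(\phi)=\tilde v(t(\phi))=v(t(\phi))=(t\circ v)(\phi)$, the middle equality holding because $t(\phi)\in im(t_A)$, where $\tilde v$ restricts to $v$. Combining this with \eq{gaugeab} and \eq{phi} gives $F^g(\phi)=(t\circ v)(\phi)F(\phi)=F'(\phi)$ for all $\phi$, whence $F'=g\cdot F$ and $\mathbf F\sim\mathbf F'$. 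As indicated, the extension of $v$ — powered by the divisibility of $k^\times$ — is the crux of the argument; everything else is the formal interplay of extension and restriction of homomorphisms.
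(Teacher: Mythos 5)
Your proof is correct and takes essentially the same approach as the paper: the forward direction is the restriction of $g_A$ to $im(t_A)$, and the converse extends $v$ to all of $A(\Gamma)$ using the injectivity of $k^\times$ as a $\Z$-module, then reads off a gauge transformation from the extension. (If anything, your justification of injectivity via divisibility from algebraic closure is more precise than the paper's appeal to characteristic $0$.)
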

\begin{proof}
Let $g$ be a gauge transformation from $\mathbf F$ to $\mathbf F'$.
Then $g_A$ restricts to $im(t_A)$.

Conversely, suppose we have $v:im(t_a)\rightarrow k^\times$ satisfying equation \eq{phi} for every $\phi\in\Phi$.
Since $k$ is of characteristic $0$, $k^\times$ is an injective $\Z$-module.
Thus $v$ extends to a well-defined homomorphism $\bar v:A(\Gamma)\rightarrow k^\times$, the set map restriction to $\Gamma$ of which satisfies equations \eq{tobegauge}.
\end{proof}

Define $A_{Inv}(\Phi):=ker(t_A)$ and consider an element $A_{Inv}(\Phi)\ni m = \prod_{i=1}^{n}(\phi_i)^{k_i}$.
Let $F(m)=\prod_{i=1}^{n}F(\phi_i)^{k_i}$ be the evaluation of $m$ at $\mathbf F\in X$.
If $F(m)$ is defined, then for all $g\in G$ the value $(g\cdot F)(m)$ is determined by $g$.
By definition though, $m$ is invariant under gauge transformation, and so $F(m)=(g\cdot F)(m)$.
Thus if $F(m)$ is defined at $\mathbf F'\in G\cdot\mathbf F$, then $F(m)$ is defined at all points in $G\cdot\mathbf F$.
Additionally, $F(m)$ is constant on $G\cdot\mathbf F$, and so there exists $f_m \in \mathcal O_{X}(G\cdot\mathbf F)^G$ such that $f_m=F(m)$.

\begin{proposition}\label{prop:evals}
Let $\psi:X\rightarrow Y$ be a quotient map as in Proposition \ref{prop:gaugequotient}. 
Fix $\mathbf F\in X$ with $\psi(\mathbf F)=y\in Y$.
The evaluations of monomials  
\begin{align*}
M_{y}&=\{m \in A_{Inv}(\Phi)\vert F(m)\text{ is well defined}\} \\
&=\{m \vert f_m \in \mathcal O_{X}(\psi^{-1}(y))^G\}.
\end{align*}
uniquely determine the evaluations of all $f_m\in \mathcal O_{Y,y}$.
\end{proposition}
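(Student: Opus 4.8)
The plan is to reduce the statement to a single set-theoretic identity---that the two descriptions of $M_y$ coincide---after which the ``uniquely determine'' clause is essentially tautological. First I would record the geometric facts I intend to use. By Proposition \ref{prop:gaugequotient}(2) the fibre $\psi^{-1}(y)$ is a single orbit, and since $\mathbf F\in\psi^{-1}(y)$ we have $\psi^{-1}(y)=G\cdot\mathbf F$ together with $\mathcal O_{Y,y}=\mathcal O_X(\psi^{-1}(y))^G$. By Lemma \ref{lem:closed} and its corollary this orbit is closed in the affine set $X$, hence is itself an affine variety, and it is irreducible, so its coordinate ring is a domain. The paragraph preceding the proposition already shows that whenever $F(m)$ is defined it is constant along $G\cdot\mathbf F$ and agrees with the value of $f_m$; in particular $f_m(y)=F(m)$ for every $m$ in the first set.

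Next I would establish the set equality. For $\subseteq$, suppose $F(m)$ is well defined at $\mathbf F$, so in $m=\prod\phi_i^{k_i}$ every $\phi\in F^{-1}(0)$ occurs with nonnegative exponent. If some such $\phi$ occurs with strictly positive exponent, then, recalling that $F^{-1}(0)$ is gauge invariant (established after \eq{gauge}), $\phi$ vanishes identically on $G\cdot\mathbf F$ and hence $f_m\equiv 0$. Otherwise every $\phi_i$ appearing in $m$ is nowhere vanishing on the affine variety $G\cdot\mathbf F$, hence a unit in its coordinate ring by the Nullstellensatz, so $\prod\phi_i^{k_i}$ is regular. In either case $f_m$ is a regular function, and it is $G$-invariant because $m\in A_{Inv}(\Phi)=\ker(t_A)$; thus $f_m\in\mathcal O_X(\psi^{-1}(y))^G$.

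For $\supseteq$ I would argue contrapositively. If $F(m)$ is not well defined then some $\phi\in F^{-1}(0)$ occurs in $m$ with strictly negative exponent; since $m$ is a reduced word in the free Abelian group $A(\Phi)$, that $\phi$ genuinely sits in the denominator. By gauge invariance of $F^{-1}(0)$ together with irreducibility of $G\cdot\mathbf F$, this $\phi$ is the zero function on the whole orbit, so $m$ would require inverting the zero element of an integral domain. It therefore defines neither a regular nor a rational function on $\psi^{-1}(y)$, whence $f_m\notin\mathcal O_X(\psi^{-1}(y))^G$. This is the step I expect to be the main obstacle: one must exclude a ``miraculous'' cancellation in which an apparent division by a vanishing symbol nonetheless extends to a regular function on the orbit. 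Irreducibility of the orbit---so that the coordinate ring is a domain with no zero divisors available for cancellation---and the gauge invariance of the vanishing locus are precisely what close this gap.

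Finally, combining the two inclusions, $M_y$ is exactly the index set of those $m$ with $f_m\in\mathcal O_{Y,y}$, and for each such $m$ we have $f_m(y)=F(m)$ from the first paragraph. Hence the list of evaluations $\{F(m):m\in M_y\}$ is literally the list of values of all $f_m\in\mathcal O_{Y,y}$, giving the asserted determination.
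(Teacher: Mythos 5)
There is a genuine gap, and it lies in your opening move: you reduce the proposition to the set-theoretic identity between the two descriptions of $M_y$ and then declare the ``uniquely determine'' clause tautological. That rests on a misreading of the conclusion. The local ring $\mathcal O_{Y,y}=\mathcal O_X(\psi^{-1}(y))^G$ does not consist solely of the functions induced by invariant monomials: it contains \emph{all} $G$-invariant regular functions on the orbit --- constants, sums such as $f_{m_1}+f_{m_2}$, and in general functions represented on the fibre as ratios $g/h$ of polynomials that are nowhere close to being monomials. The content of the proposition is that the numbers $\{F(m)\,\vert\, m\in M_y\}$ determine the value at $y$ of \emph{every} such element, and this is exactly what the later results need: Corollary \ref{cor:group}, Corollary \ref{itm:diff}, and ultimately Theorem \ref{thm:main2} must pass from the separating invariant regular functions supplied by geometric invariant theory (Proposition \ref{prop:gorbs}), which are arbitrary elements of the invariant ring, down to finitely many monomials. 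Under your reading the proposition would assert nothing beyond its own displayed definition, which should itself have been a warning sign. Your final sentence, that the list $\{F(m): m\in M_y\}$ ``is literally the list of values of all $f_m\in\mathcal O_{Y,y}$,'' is false once $\mathcal O_{Y,y}$ is taken to be what it is, namely the whole local ring. (Your verification of the displayed set equality is essentially sound, but in the paper that equality functions as part of the statement, not as the thing to be proved.)

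What is missing is the paper's localization argument. Take an arbitrary $f_p\in\mathcal O_{Y,y}$ and write it on $\psi^{-1}(y)$ as $g/h$ with $g=g_1+\cdots+g_n$ and $h=h_1+\cdots+h_m$ not identically zero; after discarding variables in $F^{-1}(0)$ and splitting $g/h$ into the $G$-invariant terms $g_i/h$, one may assume $g$ is a single monomial with $F(g)\neq 0$. Then $G$-invariance of $g/h$ forces every term of $h$ to carry the same gauge character as $g$, i.e.\ $t_A(h_j)=t_A(g)$ for all $j$, so that
\begin{equation*}
f_p \;=\; \frac{g}{h} \;=\; \frac{1}{\,h_1g^{-1}+\cdots+h_mg^{-1}\,},
\end{equation*}
where each $h_jg^{-1}$ lies in $\ker(t_A)=A_{Inv}(\Phi)$ and is nonvanishing on the fibre, hence belongs to $M_y$. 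Consequently the evaluation of $f_p$ at $y$ is the reciprocal of a sum of evaluations of elements of $M_y$, which is the asserted determination. Nothing in your proposal substitutes for this step, so the proof as written does not establish the proposition.
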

\begin{proof}
$M_{y}$ is a finitely generated multiplicative sub-monoid of $A_{Inv}(\Phi)$. 
By definition of $\mathcal O_{Y,y}$ there exists a monoid homomorphism $$\zeta_M:M_{y}\rightarrow \mathcal O_{Y,y}$$ such that $\zeta(m)=f_m$.

Let $f_p\in \mathcal O_{Y,y}$.
Locally, it is a ratio of polynomials $g=g_1+\ldots+g_n$ and $h=h_1+\ldots+h_m$ not identically zero such that $f_p=g/h$ on $\psi^{-1}(y)$.
Without loss of generality, it may be assumed that neither $g$ nor $h$ contains variables $\phi_i$ such that $F(\phi_i)=0$.
It may also be assumed that 1) $g$ is monomial - since $g/h$ can be broken into the ($G$-invariant) terms $g_1/h+\ldots+g_n/h$ - and 2) $F(g)\neq 0$. 

This then implies that for all monomials $h_1,\ldots, h_m$ that $t_A(g)=t_A(h_j)$ for $j=1,\ldots, m$.
Thus we have that, on $\psi^{-1}(y)$, $$f_p=\frac {g} h=\frac 1 {g^{-1}(h_1+\ldots + h_m)}=\frac 1 {h_1(g^{-1})+\ldots h_n(g^{-1})}.$$
By construction the terms $h_j(g^{-1})$ are nonzero on $\psi^{-1}(y)$ and thus $h_j(g^{-1})\in M_{y}$. 
Thus $f_p$ is representable by the inverse of a sum of terms in $M_y$.
\end{proof}

Fix $y\in Y$ and define the maximal subgroup $M_y\supset A_{y}=\{m\in M_{y}\vert \forall \mathbf F\in \psi^{-1}(y), F(m)\neq 0\}$. 
Since $A_{y}$ is finitely generated, it has a basis $\mathcal B(A_{y})=\{m_1,\ldots, m_n\}$. 
In Corollary \ref{cor:last} we show that $A_y$ uniquely determines $F^{-1}(0)$ and thus $M_y$.

\begin{corollary}\label{cor:group}
Fix a point $\mathbf F\in X$ such that $\psi(\mathbf F)=y$.
The evaluations of monomials in $\mathcal B(A_y)$ uniquely determine the evaluations of all $f_m\in \mathcal O_{Y,y}$.
\end{corollary}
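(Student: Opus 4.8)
The plan is to reduce to Proposition \ref{prop:evals}, which already asserts that the evaluations of $\mathbf F$ on the monoid $M_y$ determine the evaluations of every $f_m \in \mathcal O_{Y,y}$. Hence it suffices to recover $F(m)$ for each $m \in M_y$ from the finite list of values $\{F(m_i) : m_i \in \mathcal B(A_y)\}$. First I would record the structural fact used throughout: for $m \in M_y$ the scalar $F(m)$ is well defined, and since $f_m$ is $G$-invariant while $\psi^{-1}(y)$ is a single $G$-orbit by Proposition \ref{prop:gaugequotient}, the value $F(m)$ is constant as $\mathbf F$ ranges over $\psi^{-1}(y)$. This constancy was already observed in the discussion preceding Proposition \ref{prop:evals}.

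Next I would split $M_y$ according to its group of units. Recall that $A_y$ is the maximal subgroup of $M_y$, equivalently the set of $m \in M_y$ with $F(m) \neq 0$ on the whole fiber. For $m \in A_y$ I would write $m = \prod_i m_i^{a_i}$ with $m_i \in \mathcal B(A_y)$ and $a_i \in \Z$, which is possible since $\mathcal B(A_y)$ is a basis of the free Abelian group $A_y$. As every $F(m_i)$ is non-zero, evaluation restricts to a group homomorphism $F|_{A_y}\colon A_y \to k^\times$, so
\begin{equation}
F(m) = \prod_i F(m_i)^{a_i},
\end{equation}
computed from the given basis values with no division-by-zero ambiguity. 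For $m \in M_y \setminus A_y$ I would invoke the characterization of $A_y$ as the units of $M_y$: if $m \notin A_y$ then $F(m) = 0$ at some point of $\psi^{-1}(y)$, whence $F(m) = 0$ everywhere on the fiber by the constancy above. Thus the evaluation is forced to be $0$, independently of the basis data, and combining the two cases shows $\{F(m_i)\}_{m_i \in \mathcal B(A_y)}$ determines $F(m)$ for all $m \in M_y$; Proposition \ref{prop:evals} then upgrades this to all of $\mathcal O_{Y,y}$.

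The point requiring the most care is the dichotomy in the second step. The assertion $m \in M_y$ only guarantees that $F(m)$ is well defined (finite), not that it is non-zero; the decomposition of $M_y$ into $A_y$ and its complement is precisely the split between non-vanishing and vanishing evaluations, and this split is legitimate only because $F(m)$ is constant on $\psi^{-1}(y)$, which itself rests on the fibers being single orbits. Once that constancy is in hand, the homomorphism property on the group $A_y$ and the forced vanishing on its complement together pin down every evaluation, so no further input beyond the basis values is needed.
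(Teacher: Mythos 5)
Your proposal is correct and takes essentially the paper's intended route: the corollary is meant to follow from Proposition \ref{prop:evals} together with the free Abelian structure of $A_y$, which is exactly your reduction. The only cosmetic difference is how the gap between $\mathcal B(A_y)$ and $M_y$ is bridged — you force $F(m)=0$ on $M_y\setminus A_y$ via the unit/non-unit dichotomy and orbit-constancy, while the paper's proof of Proposition \ref{prop:evals} already represents every $f_p\in\mathcal O_{Y,y}$ using only monomials non-vanishing on the fiber (hence lying in $A_y$), so the complement of $A_y$ never enters; both bridges are valid.
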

\begin{corollary}
Let $\mathbf F'\in X$ with $y' = \psi(\mathbf F')$, and $y\neq y'$.
Then there exists a $G$-invariant monomial $$\prod_{i=1}^n \phi_{i}^{k_i}=m \in \mathcal B(A_y)$$ such that $F(m)\neq F'(m)$ or $F'(m)$ is undefined.\label{itm:diff}
\end{corollary}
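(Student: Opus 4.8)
The plan is to argue by contraposition: I assume that for every $m \in \mathcal B(A_y)$ the value $F'(m)$ is defined and equals $F(m)$, and I aim to conclude $\mathbf F \sim \mathbf F'$, i.e. $y = y'$, contradicting $y \neq y'$.

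The first step is to promote this hypothesis from the basis to the whole group. Every $m \in A_y$ has $F(m) \neq 0$ by definition of $A_y$, so $F$ and $F'$ restrict to homomorphisms $A_y \to k^\times$; since they agree on the $\Z$-basis $\mathcal B(A_y)$ they agree on all of $A_y$, and in particular $F'(m)$ is defined and nonzero for every $m \in A_y$. As the evaluation of a $G$-invariant monomial is constant along orbits, this also shows $A_y \subseteq A_{y'}$.

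The second step is to manufacture the gauge homomorphism required by Lemma \ref{lem:big}. Writing $\Phi_{\neq 0} := \Phi \setminus F^{-1}(0)$, I define $\delta \colon A(\Phi_{\neq 0}) \to k^\times$ on generators by $\delta(\phi) = F'(\phi)/F(\phi)$. A direct check gives $A_y = A_{Inv}(\Phi) \cap A(\Phi_{\neq 0}) = \ker\big(t_A|_{A(\Phi_{\neq 0})}\big)$, and the first step shows $\delta \equiv 1$ on $A_y$; hence $\delta$ descends through $t_A$ to a homomorphism $v_0 \colon t_A\big(A(\Phi_{\neq 0})\big) \to k^\times$ with $v_0 \circ t_A = \delta$. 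Just as in the proof of Lemma \ref{lem:big}, divisibility of $k^\times$ (since $k$ is algebraically closed of characteristic $0$, $k^\times$ is an injective $\Z$-module) extends $v_0$ to $v \colon \mathrm{im}(t_A) \to k^\times$. By construction $v(t(\phi))F(\phi) = F'(\phi)$ for all $\phi \in \Phi_{\neq 0}$, and since $v$ is $k^\times$-valued this forces $F'(\phi) \neq 0$ there, i.e. $(F')^{-1}(0) \subseteq F^{-1}(0)$.

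The last step — and the step I expect to be the main obstacle — is the reverse inclusion $F^{-1}(0) \subseteq (F')^{-1}(0)$, which is exactly what is needed for \eqref{eq:phi} to hold on the vanishing variables (where it reads $F'(\phi) = 0$). This cannot come from formal monomial manipulation alone: a $G$-invariant monomial can take the indeterminate value $0/0$, so the evaluation map is merely partial and the two vanishing loci are not a priori comparable. To close this gap I would appeal to Corollary \ref{cor:last}, which — using the invertibility of the matrices $F_{abc}^d$ — shows that $A_y$ uniquely determines $F^{-1}(0)$; together with $A_y \subseteq A_{y'}$ and the equality of evaluations on $A_y$, this identifies the vanishing locus of $\mathbf F'$ with that of $\mathbf F$. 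With $F^{-1}(0) = (F')^{-1}(0)$, equation \eqref{eq:phi} holds for every $\phi \in \Phi$, so Lemma \ref{lem:big} yields a gauge transformation carrying $\mathbf F$ to $\mathbf F'$; thus $y = y'$, the required contradiction. Alternatively, one can phrase the whole argument through Corollary \ref{cor:group}, which already records that the evaluations on $\mathcal B(A_y)$ determine all of $\mathcal O_{Y,y}$ and hence the orbit.
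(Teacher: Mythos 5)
Your skeleton --- contrapose, promote agreement on $\mathcal B(A_y)$ to a homomorphism identity on all of $A_y$, build a character by divisibility of $k^\times$, and finish with Lemma \ref{lem:big} --- is the same route the paper implicitly relies on (the paper states this corollary with no separate proof, leaning on Corollary \ref{cor:group} and the appendix), and your Step 1 is correct. One minor flaw first: as written, $\delta$ is declared $k^\times$-valued before you know $F'(\phi)\neq 0$ on $\Phi\setminus F^{-1}(0)$, and you then ``deduce'' that nonvanishing from $v$ being $k^\times$-valued; that is circular. It is fixable: by Lemma \ref{lem:important} every $\phi$ with $F(\phi)\neq 0$ occurs with nonzero exponent in some element of $A_y$, and since Step 1 makes $F'$ defined and nonzero on all of $A_y$, every constituent variable of every element of $A_y$ is forced to be $F'$-nonvanishing. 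This gives $(F')^{-1}(0)\subseteq F^{-1}(0)$ honestly, after which the $\delta$, $v_0$, $v$ construction goes through.

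The genuine gap is exactly the step you flagged, and your proposed patch does not close it. Corollary \ref{cor:last} determines $F^{-1}(0)$ from $A_y$ (concretely, $F^{-1}(0)$ is the complement of the union of the supports of elements of $A_y$), and likewise determines $(F')^{-1}(0)$ from $A_{y'}$. Feeding it what you have --- $A_y\subseteq A_{y'}$ together with agreement of evaluations on $A_y$ --- only reproduces $(F')^{-1}(0)\subseteq F^{-1}(0)$, the inclusion you already had, because supports can only grow when passing to the larger group; it says nothing about the reverse inclusion. To invoke Lemma \ref{lem:equiv} (the implication from (3) to (1)) you would need $A_y=A_{y'}$, which is precisely what is unproved: nothing in your argument excludes $A_y\subsetneq A_{y'}$ with $(F')^{-1}(0)\subsetneq F^{-1}(0)$. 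Citing Corollary \ref{cor:group} does not help either, since it is a statement about the single local ring $\mathcal O_{Y,y}$ and compares nothing across distinct orbits. The scenario that must be excluded is: $\mathbf F'$ vanishes on strictly fewer variables than $\mathbf F$ yet agrees with $\mathbf F$ on all of $A_y$. In that scenario every $m\in\mathcal B(A_y)$ has $F'(m)$ defined, nonzero, and equal to $F(m)$, while $y\neq y'$ because zero sets are gauge invariants --- so the corollary's conclusion would literally fail. Ruling this out is therefore the essential content of the proof, and it requires an argument beyond the corollaries you cite (for instance, applying Lemma \ref{lem:important} to $\mathbf F'$ and confronting the resulting invariant monomials with the pentagon relations, or an orbit-closure argument using Lemma \ref{lem:closed}); your proposal, as it stands, asserts rather than proves the identification of the two vanishing loci.
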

\begin{corollary}
$\mathcal C(\mathbf F)$ is gauge equivalent to its Galois conjugate if and only if $F(m)\in \Q$ for all $m\in \mathcal B(A_y)$.\label{itm:galois}
\end{corollary}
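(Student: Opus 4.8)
The plan is to reduce the statement to the separation property already established in Corollary \ref{itm:diff}, after making the Galois action precise. Since the defining equations \eq{F1}--\eq{F3} have coefficients in $\Z\subset\Q$, and the coordinates $F(\phi)$ of a point may be taken to be algebraic numbers (Ocneanu rigidity forces $Y$ to be a finite set of points, defined over $\overline{\Q}$), the absolute Galois group $\mathrm{Gal}(\overline{\Q}/\Q)$ acts on $X$: for $\sigma\in\mathrm{Gal}(\overline{\Q}/\Q)$ set $(\sigma\cdot F)(\phi):=\sigma(F(\phi))$, which again satisfies \eq{F1}--\eq{F3}, so $\sigma\cdot\mathbf F\in X$ and $\mathcal C(\sigma\cdot\mathbf F)$ is the Galois conjugate category. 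I read ``gauge equivalent to its Galois conjugate'' as $\mathbf F\sim\sigma\cdot\mathbf F$ for every such $\sigma$.

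First I would record two elementary facts. Since a field automorphism fixes $0$ and carries nonzero elements to nonzero elements, $(\sigma\cdot F)^{-1}(0)=F^{-1}(0)$; because $M_y$, and hence $A_y$ and its basis $\mathcal B(A_y)$, depend only on $F^{-1}(0)$, these data are identical for $\mathbf F$ and for $\sigma\cdot\mathbf F$. Second, for any $m=\prod_i\phi_i^{k_i}\in\mathcal B(A_y)$ the evaluation map intertwines with $\sigma$: since $\sigma$ is a field homomorphism and $F(m)\neq 0$ (as $m\in A_y$), one has $(\sigma\cdot F)(m)=\prod_i\sigma(F(\phi_i))^{k_i}=\sigma(F(m))$.

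With this in hand the corollary follows directly. Because $(\sigma\cdot F)^{-1}(0)=F^{-1}(0)$, the two points share a common zero pattern, so the ``undefined'' alternative in Corollary \ref{itm:diff} cannot occur and Corollaries \ref{cor:group} and \ref{itm:diff} give $\mathbf F\sim\sigma\cdot\mathbf F$ if and only if $F(m)=(\sigma\cdot F)(m)$ for all $m\in\mathcal B(A_y)$, i.e. if and only if $\sigma(F(m))=F(m)$ for all $m\in\mathcal B(A_y)$. Hence $\mathcal C(\mathbf F)$ is gauge equivalent to each of its Galois conjugates precisely when $\sigma(F(m))=F(m)$ for every $\sigma\in\mathrm{Gal}(\overline{\Q}/\Q)$ and every $m\in\mathcal B(A_y)$. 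Since an algebraic number is fixed by the whole absolute Galois group exactly when it is rational, this is equivalent to $F(m)\in\Q$ for all $m\in\mathcal B(A_y)$, as claimed.

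The substantive content lies entirely in the setup rather than in the argument: one must check that $\sigma\cdot\mathbf F$ is a genuine point of $X$ (requiring the pentagon ideal to be defined over $\Q$ and the chosen coordinates to be algebraic) and that $A_y$ is a Galois-stable invariant of the gauge orbit. Once these are granted, the only computational input is the homomorphism identity $(\sigma\cdot F)(m)=\sigma(F(m))$, which is immediate, and the rest is a formal consequence of the already-proven fact that $\mathcal B(A_y)$ separates gauge orbits sharing a common zero pattern. I expect the main obstacle to be the bookkeeping that justifies comparing the gauge class of $\sigma\cdot\mathbf F$ to that of $\mathbf F$ on the \emph{same} basis $\mathcal B(A_y)$, which is exactly what the observation $(\sigma\cdot F)^{-1}(0)=F^{-1}(0)$ secures.
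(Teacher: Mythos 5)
Your proposal is correct and takes essentially the same route the paper intends: the paper states this corollary without proof as an immediate consequence of Corollaries \ref{cor:group} and \ref{itm:diff}, relying on exactly the facts you verify, namely that Galois conjugation preserves $F^{-1}(0)$ (hence fixes $A_y$ and $\mathcal B(A_y)$), that it commutes with evaluation of invariant monomials, and that an algebraic number is rational precisely when it is fixed by all of $\mathrm{Gal}(\overline{\Q}/\Q)$. Your extra care in choosing an algebraic representative of the gauge orbit and in noting that the conjugate's gauge class is well defined fills in bookkeeping the paper leaves implicit, but does not change the argument.
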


The first half of Theorem \ref{thm:main2a} has now been proved, as has the first half of Theorem \ref{thm:main2}.
Before proving the second half of Theorem \ref{thm:main2}, we note that we have all of the information necessary to characterize the quotient $Y$.

Let $y_1,\ldots y_n\in Y$ be a collection of points with $A_{y_1},\ldots, A_{y_n}$ respectively. 
Define the group
\begin{align*}
A_{y_1,\ldots, y_n} &= \{m\in A_{inv}(\phi) \vert \forall \mathbf F\in \bigcup_{i=1}^n \psi^{-1}(y_i), F(m)\neq 0\}. \\
\end{align*}
It is immediate that $A_{y_1,\ldots, y_n}=\bigcap_{i=1}^n A_{y_i}$ and so one make choices such that $$\mathcal B(A_{y_1,\ldots,y_n})=\bigcap_{i=1}^n\mathcal B(A_{y_i}).$$

\begin{theorem}\label{thm:gaugeinvs}
Partition $Y$ into set $Y_k$ such that for all $y\in Y_j,$ and $y'\in Y_k$, $A_y = A_{y'}$ if and only if $j=k$. 
The $Y_k$ define closed subschemes which define a partition on $Y$.
\end{theorem}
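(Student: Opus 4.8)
The plan is to reduce the statement to the single structural fact that the quotient scheme $Y$ is zero-dimensional, after which closedness of every block of the partition becomes automatic. First I would record that $y\mapsto A_y$ is well defined and that its fibers are unions of vanishing strata. By Proposition \ref{prop:gaugequotient} each $\psi^{-1}(y)$ is a single $G$-orbit, and $F^{-1}(0)\subseteq\Phi$ is a gauge invariant, so it takes a constant value $S_y$ on $\psi^{-1}(y)$; since $A_y=A_{Inv}(\Phi)\cap A(\Phi\setminus S_y)$ depends only on $S_y$, the partition of $Y$ by the value of $A_y$ is a (possibly coarser) union of the strata $Y_S:=\{y\in Y : S_y=S\}$ indexed by $S\subseteq\Phi$. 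By Corollary \ref{cor:last} the two partitions in fact coincide, but for closedness only the forward dependence $S_y\mapsto A_y$ is needed. It therefore suffices to show that every union of strata $Y_S$ underlies a closed subscheme of $Y$.

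Next I would establish that $Y$ is a finite discrete scheme. Ocneanu rigidity guarantees finitely many gauge equivalence classes, hence finitely many $G$-orbits, so by Proposition \ref{prop:gaugequotient} the underlying space of $Y$ has exactly $\mathscr N$ points. On the other hand $\mathcal O_Y(Y)=\mathcal O_X(X)^G$ is a finitely generated $k$-algebra, since $G$ is reductive (\cite[Theorem 1.1]{MR1304906}). A finitely generated algebra over the algebraically closed field $k$ whose spectrum has only finitely many points must be Artinian: any quotient that is a domain of positive Krull dimension would, by the Nullstellensatz, contribute infinitely many maximal ideals. Hence $A:=\mathcal O_Y(Y)$ is Artinian and splits as a finite product $A\cong\prod_{y\in Y}\mathcal O_{Y,y}$ of local Artinian $k$-algebras, one factor per point. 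In particular every point of $Y$ is both open and closed, so the Zariski topology on $Y$ is discrete.

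Finally I would conclude by idempotent bookkeeping. In the product decomposition, any subset $W\subseteq Y$ corresponds to the idempotent $e_W\in A$ equal to $1$ on the factors indexed by $W$ and $0$ elsewhere; the quotient $A\twoheadrightarrow A/(1-e_W)\cong\prod_{y\in W}\mathcal O_{Y,y}$ realizes $W$ as the closed subscheme $V(1-e_W)$, which is simultaneously open. Applying this to each block $Y_k$ (a finite union of strata $Y_S$, hence clopen) exhibits it as a closed subscheme, and since the $Y_k$ are the fibers of $y\mapsto A_y$ they partition $Y$, as claimed.

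The step I expect to be the crux is the zero-dimensionality of $Y$: without it the argument collapses, because the strata are only locally closed in general. Indeed, $Y_S$ is the image of the locally closed invariant set where $F(\phi)=0$ exactly for $\phi\in S$, and although images of \emph{closed} invariant sets under a geometric quotient by a reductive group are closed, a direct closure computation only yields $\overline{Y_S}\subseteq\bigsqcup_{S'\supseteq S}Y_{S'}$, so the boundary could a priori land in strata with strictly larger vanishing set. It is precisely the finiteness of $Y$, and hence discreteness, that rules this out and upgrades each stratum from locally closed to closed. Verifying the finiteness input — finite generation of the invariant ring together with finitely many orbits — is therefore where the real content lies, while the passage from discreteness to closed subschemes is routine.
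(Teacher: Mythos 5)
Your proof is correct and follows essentially the same route as the paper: the whole statement is reduced to $Y$ being zero-dimensional with closed points, which makes each block $Y_k$ clopen and hence simultaneously an open and closed subscheme of $Y$. The only difference is one of detail, not strategy --- you actually justify the discreteness (finite generation of $\mathcal O_X(X)^G$ plus finitely many orbits gives an Artinian ring, whose idempotent decomposition realizes each clopen block as $V(1-e_W)$), whereas the paper asserts zero-dimensionality outright and equips $Y_k$ with the restricted structure sheaf.
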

\begin{proof}
$Y$ is zero dimensional and all points are closed so $Y_k$ is clopen - hence it has the canonical structure of an open-subscheme.
Then for all $U\subset Y_k$, we define $\mathcal O_{Y_k}(U):=\mathcal O_{Y\vert Y_k}(U)=\mathcal O_{Y}(U)$.
There is a canonical injection of $Y_k$ into $Y$ which is trivially a closed immersion, so $Y_k$ is closed.
$Y$ is then the disjoint union of these schemes.
\end{proof}
\begin{corollary}
Each $y\in Y_k$ is uniquely determined by the evaluations of monomials in $\mathcal B(A_{Y_k})$.
\end{corollary}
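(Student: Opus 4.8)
The plan is to prove the corollary by showing that the map sending a point $y\in Y_k$ to the tuple $\bigl(F(m)\bigr)_{m\in\mathcal B(A_{Y_k})}$, evaluated on any representative $\mathbf F\in\psi^{-1}(y)$, is injective on $Y_k$. First I would check that this map is well defined: by the defining property of the partition in Theorem \ref{thm:gaugeinvs}, every $y\in Y_k$ has $A_y=A_{Y_k}$, so each $m\in\mathcal B(A_{Y_k})$ lies in $A_y$; by the definition of $A_y$ this forces $F(m)\neq 0$, so $F(m)$ is defined, and since $m$ is a $G$-invariant monomial it is constant on the orbit $\psi^{-1}(y)$ and hence independent of the chosen representative.

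Next I would record the multiplicative structure. Because every $m\in A_{Y_k}$ evaluates to a nonzero scalar, the partial-definedness obstruction never arises and $m\mapsto F(m)$ restricts to a genuine group homomorphism $A_{Y_k}\to k^\times$, satisfying $F(m_1m_2)=F(m_1)F(m_2)$ and $F(m^{-1})=F(m)^{-1}$. Since $\mathcal B(A_{Y_k})$ is a basis of the free Abelian group $A_{Y_k}$ and every element is a product of integral powers of basis elements, the tuple $\bigl(F(m)\bigr)_{m\in\mathcal B(A_{Y_k})}$ determines $F(m)$ for \emph{every} $m\in A_{Y_k}$.

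The key step is to invoke Corollary \ref{itm:diff}. Given distinct $y,y'\in Y_k$ represented by $\mathbf F,\mathbf F'$, that corollary furnishes a monomial $m\in\mathcal B(A_y)$ with $F(m)\neq F'(m)$ or $F'(m)$ undefined. Since $y,y'\in Y_k$ share the group $A_y=A_{y'}=A_{Y_k}$, this $m$ lies in $A_{Y_k}$, so $F'(m)$ is in fact defined and nonzero; the undefined alternative is excluded and only $F(m)\neq F'(m)$ survives. By the homomorphism property of the previous paragraph, both $F(m)$ and $F'(m)$ are determined by the respective tuples over $\mathcal B(A_{Y_k})$, so if those tuples agreed we would obtain $F(m)=F'(m)$, a contradiction. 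Hence distinct points of $Y_k$ produce distinct tuples, giving the claimed unique determination.

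I expect the only real subtlety to be the bookkeeping between the two bases, since Corollary \ref{itm:diff} is phrased using $\mathcal B(A_y)$ while the statement concerns $\mathcal B(A_{Y_k})$. This is harmless precisely because $A_y=A_{Y_k}$ for $y\in Y_k$: the two sets are bases of one and the same free Abelian group, and the homomorphism property lets one pass freely between them, so no additional geometric input beyond Theorem \ref{thm:gaugeinvs} and Corollary \ref{itm:diff} is needed.
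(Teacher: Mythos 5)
Your proof is correct and follows essentially the same route the paper intends for this corollary: combine the defining property of the partition from Theorem \ref{thm:gaugeinvs} (so that $A_y = A_{Y_k}$ for every $y \in Y_k$, making all evaluations defined, nonzero, and multiplicative) with the separating monomial from Corollary \ref{itm:diff}, whose ``undefined'' alternative is excluded precisely because the two points share the same group $A_{Y_k}$. Your extra care about passing between the bases $\mathcal B(A_y)$ and $\mathcal B(A_{Y_k})$ via the homomorphism property is a harmless elaboration of the same argument, not a different approach.
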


In the multiplicity-free case, the action of $\rho \in Aut(R,B)$ on $\mathbf F$ as defined in \eq{aut} is equivalent to the action
\begin{equation}\label{eq:autf}
\rho \cdot \tenj{F_{abc}^d,1,e,1,1,f,1}=\tenj{F_{\rho(a)\rho(b)\rho(c)}^{\rho(d)},1,\rho(e),1,1,\rho(f),1}.
\end{equation}
For all $\mathbf F\in X$, there is an induced an action of $Aut(R,B)$ on the orbit $G\cdot \mathbf F$ via $\rho\cdot (G\cdot\mathbf F)=G\cdot(\rho \cdot \mathbf F)$ and hence an action of $Aut(R,B)$ on $Y$.
Define the action of $Aut(R,B)$ on $\Phi$ via
\begin{equation}\label{eq:autphi}
\rho \cdot \tenj{\Phi_{abc}^d,1,e,1,1,f,1}=\tenj{\Phi_{\rho(a)\rho(b)\rho(c)}^{\rho(d)},1,\rho(e),1,1,\rho(f),1}.
\end{equation}
With this action, $F$ is an $Aut(R,B)$-equivariant map. 

Fix $y\in Y$.
$M_{y}$ extends linearly to a $k$-algebra $R_{y}$ and $\zeta_M$ extends to a $k$-algebra homomorphism $\zeta_R$.
Define $R_{y_1,\ldots, y_n}$ in a similar fashion to $A_{y_1,\ldots, y_n}$.

\begin{proposition}
Let $Aut(R,B)$ act on $\Phi$ as in \eq{autphi}.
Then for all $y\in Y$ and $\rho\in Aut(R,B)$ there exists an isomorphism $\rho_R:R_{y}\rightarrow R_{\rho\cdot y}$. 
\end{proposition}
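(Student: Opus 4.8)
The plan is to realize $\rho$ as a permutation of the coordinates $\Phi$, to show that the induced group automorphism of $A(\Phi)$ preserves $A_{Inv}(\Phi)$ and carries the sub-monoid $M_y$ onto $M_{\rho\cdot y}$, and then to extend this monoid isomorphism linearly to the monoid $k$-algebras $R_y$ and $R_{\rho\cdot y}$. Since \eq{autphi} permutes the free generators $\Phi$, it extends uniquely to a group automorphism $\rho_A\colon A(\Phi)\to A(\Phi)$; the only real content lies in two compatibilities.

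First I would verify that $\rho_A$ preserves $A_{Inv}(\Phi)=\ker(t_A)$, for which it suffices that $t$ be $Aut(R,B)$-equivariant. This is a direct computation: applying \eq{autphi} and then $t$ to $\tenj{\Phi_{abc}^d,1,e,1,1,f,1}$ produces $\gamma_{\rho(a)\rho(b)}^{\rho(e)}\gamma_{\rho(e)\rho(c)}^{\rho(d)}(\gamma_{\rho(b)\rho(c)}^{\rho(f)}\gamma_{\rho(a)\rho(f)}^{\rho(d)})^{-1}$, which is exactly the image of $t(\tenj{\Phi_{abc}^d,1,e,1,1,f,1})=\gamma_{ab}^e\gamma_{ec}^d(\gamma_{bc}^f\gamma_{af}^d)^{-1}$ under the permutation automorphism of $A(\Gamma)$ induced by relabelling $\Gamma$ by $\rho$. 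Hence $t_A$ intertwines the two actions, $\rho_A(\ker t_A)=\ker t_A$, and $\rho_A$ restricts to an automorphism of $A_{Inv}(\Phi)$.

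Next I would combine the equivariance of $F$ recorded just before the statement with the identity $\psi(\rho\cdot\mathbf F)=\rho\cdot y$ defining the $Aut(R,B)$-action on $Y$. Together \eq{autf} and \eq{autphi} give the intertwining relation $(\rho\cdot F)(\phi)=F(\rho_A\phi)$ on coordinates, hence $(\rho\cdot F)(m)=F(\rho_A m)$ for every $m\in A_{Inv}(\Phi)$, with the two sides simultaneously defined or undefined. Because whether $F(m)$ is defined depends only on $F^{-1}(0)$ --- a gauge invariant, hence a function of $y$ by \S\ref{ss:ifs} --- and on the exponent pattern of $m$, this relation forces $\rho_A$ (or $\rho_A^{-1}$, depending on the orientation fixed in \eq{autf}) to restrict to a monoid isomorphism $M_y\to M_{\rho\cdot y}$; the reverse inclusion comes from running the same argument with $\rho^{-1}$.

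Finally, a monoid isomorphism $M_y\to M_{\rho\cdot y}$ extends uniquely and functorially to an isomorphism of the associated monoid $k$-algebras, giving the desired $\rho_R\colon R_y\to R_{\rho\cdot y}$, which is automatically compatible with the maps $\zeta_R$. I expect the main obstacle to be the middle step: one must pin down that ``$F(m)$ is well defined'' is genuinely a function of $F^{-1}(0)$ alone, so that it descends to $y$, and that it transforms strictly equivariantly under $\rho_A$ --- the delicate point being that an evaluation can fail to exist only through a negative exponent landing on a vanishing coordinate. Once this bookkeeping and the orientation convention of \eq{autf} are settled, the linear extension is routine.
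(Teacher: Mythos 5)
Your proposal is correct and takes essentially the same route as the paper's own (one-sentence) proof: extend the coordinate permutation \eq{autphi} multiplicatively to monomials, observe that $\rho\cdot m=\prod_i(\rho\cdot\phi_i)^{k_i}$ lands in $M_{\rho\cdot y}$, and extend linearly to the monoid algebras. The equivariance checks you spell out --- that $t_A$ intertwines the actions so $A_{Inv}(\Phi)$ is preserved, and that the equivariance of $F$ makes well-definedness of evaluations transform correctly --- are precisely the verifications the paper leaves implicit.
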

\begin{proof}
For $m=\prod_{i=1}^n(\phi_i)^{k_i}\in M_{y}$ one extends the action of \eq{autphi} to $\rho \cdot m :=\prod_{i=1}^n(\rho\cdot \phi_i)^{k_i}\in M_{\rho.y}$.
\end{proof}

\begin{proposition}\label{prop:permclosed}
$Aut(R,B)$ permutes the closed subschemes $Y_1,\ldots, Y_k$.
\end{proposition}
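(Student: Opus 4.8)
The plan is to show that the assignment $y\mapsto A_y$ is $Aut(R,B)$-equivariant in the precise sense that $A_{\rho\cdot y}=\rho\cdot A_y$ for every $\rho\in Aut(R,B)$, where $\rho\cdot A_y:=\{\rho\cdot m\mid m\in A_y\}$ denotes the image of $A_y$ under the automorphism of $A_{Inv}(\Phi)$ induced by the monomial action $m=\prod_i(\phi_i)^{k_i}\mapsto\prod_i(\rho\cdot\phi_i)^{k_i}$ coming from \eq{autphi}. Granting this, the proposition is immediate: the relation ``$A_y=A_{y'}$'' that defines the blocks $Y_k$ is preserved by $\rho$, since $A_y=A_{y'}$ forces $A_{\rho\cdot y}=\rho\cdot A_y=\rho\cdot A_{y'}=A_{\rho\cdot y'}$. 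Thus $\rho$ carries each block $Y_k$ into a single block; as $\rho$ acts on $Y$ as a bijection whose inverse $\rho^{-1}$ enjoys the same property, it induces a bijection of the finite set of blocks, i.e. it permutes $Y_1,\dots,Y_k$.

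First I would record the two compatibilities that make $y\mapsto A_y$ equivariant. The first is the $Aut(R,B)$-equivariance of $F$, already asserted in the text preceding this proposition: combining \eq{aut} and \eq{autphi} yields $(\rho\cdot F)(\rho\cdot\phi)=F(\phi)$ for every $\phi\in\Phi$, and since evaluation is multiplicative wherever it is defined this extends to $(\rho\cdot F)(\rho\cdot m)=F(m)$ for every monomial $m\in A_{Inv}(\Phi)$; in particular $F(m)\neq 0$ if and only if $(\rho\cdot F)(\rho\cdot m)\neq 0$. The second is the description of the fibres of $\psi$: from the induced action $\rho\cdot(G\cdot\mathbf F)=G\cdot(\rho\cdot\mathbf F)$ one gets $\psi^{-1}(\rho\cdot y)=\rho\cdot\psi^{-1}(y)$, so that the points of $\psi^{-1}(\rho\cdot y)$ are exactly the $\rho\cdot\mathbf F'$ with $\mathbf F'\in\psi^{-1}(y)$.

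Next I would combine these to compute $A_{\rho\cdot y}$ directly. A monomial $m$ lies in $A_{\rho\cdot y}$ precisely when $(\rho\cdot F')(m)\neq 0$ for every $\mathbf F'\in\psi^{-1}(y)$. Writing $m=\rho\cdot(\rho^{-1}\cdot m)$ and applying the equivariance $(\rho\cdot F')(\rho\cdot n)=F'(n)$ with $n=\rho^{-1}\cdot m$, this condition becomes $F'(\rho^{-1}\cdot m)\neq 0$ for all $\mathbf F'\in\psi^{-1}(y)$, i.e. $\rho^{-1}\cdot m\in A_y$, i.e. $m\in\rho\cdot A_y$. Hence $A_{\rho\cdot y}=\rho\cdot A_y$, which is exactly the claim feeding the argument of the first paragraph. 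Note that $A_y$ depends on $y$ alone, not on any chosen representative $\mathbf F$, since it is defined through the whole fibre $\psi^{-1}(y)$, so there is no ambiguity in these manipulations.

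I expect the only real subtlety to be the bookkeeping hidden in the first compatibility, namely checking $(\rho\cdot F)(\rho\cdot\phi)=F(\phi)$ against the index relabelling in \eq{aut} and \eq{autphi}, and then verifying that $m\mapsto\rho\cdot m$ is a well-defined group automorphism of $A_{Inv}(\Phi)=\ker t_A$. The latter reduces to the observation that $\rho$ permutes $\Gamma$ and satisfies $t(\rho\cdot\phi)=\rho\cdot t(\phi)$, so the induced permutation automorphism of $A(\Gamma)$ commutes with $t_A$ and therefore stabilizes $\ker t_A$. Once these formal points are in place, everything else is a direct consequence of equivariance.
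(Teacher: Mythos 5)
Your proof is correct and follows essentially the same route as the paper: the paper's immediately preceding proposition (the isomorphism $\rho_R\colon R_y\to R_{\rho\cdot y}$ induced by $m\mapsto\rho\cdot m$) is precisely your equivariance statement $A_{\rho\cdot y}=\rho\cdot A_y$, and the paper then concludes, as you do, that the partition of $Y$ by the $A_y$-data is preserved, so the blocks are permuted. Your write-up merely makes explicit the equivariance computation and the block-permutation bookkeeping that the paper's terse proof leaves implicit.
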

\begin{proof}
Let $Y'$ be a closed subscheme of $Y$ as defined in \ref{thm:gaugeinvs}.
Then $$R_{Y'}=\bigcap_{i=1}^n R_{y_i}=R_{y_1}.$$
$Aut(R,B)$ permutes points of $Y$, so $\rho \cdot y_i \neq \rho \cdot y_j$ if and only if $y_i \neq y_j$.
The set $\rho\cdot Y'=\{\rho\cdot y_1, \ldots, \rho\cdot y_n\}$ is a closed sub-scheme for exactly the same reasons as in Theorem \ref{thm:gaugeinvs}.
If $A_{\rho\cdot y_1}\neq A_{y_1}$ then $Y' \cap (\rho\cdot Y')=\emptyset$.
\end{proof}
\begin{corollary}\label{cor:permeq}
$\mathbf F,\mathbf F'\in X$ are monoidally equivalent if and only if there exists $\rho \in Aut(R,B)$ such that $A_{\rho \cdot y} = A_{y}$ and $F(m)=F'(\rho\cdot m)$, for all $m\in \mathcal B(A_{y})$.
\end{corollary}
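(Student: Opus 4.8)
The plan is to reduce monoidal equivalence to the gauge picture already developed, run it through the separation results for the quotient $Y$, and then transport everything back along the $Aut(R,B)$-action using equivariance of $F$. Write $y:=\psi(\mathbf F)$ and $y':=\psi(\mathbf F')$. First I would invoke the factorization of monoidal equivalences recorded before Proposition \ref{prop:monquotient}: since $Aut(R,B)\ltimes G\cong Aut^\otimes(\mathcal C)$ and every monoidal equivalence is a permutation $\mathcal G_\rho$ followed by a gauge equivalence, $\mathcal C(\mathbf F)$ and $\mathcal C(\mathbf F')$ are monoidally equivalent precisely when there is a $\rho\in Aut(R,B)$ with $\rho\cdot\mathbf F\sim\mathbf F'$, i.e. with $\psi(\rho\cdot\mathbf F)=\psi(\mathbf F')$. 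Because $Aut(R,B)$ acts on $Y$ through $\rho\cdot(G\cdot\mathbf F)=G\cdot(\rho\cdot\mathbf F)$, this last equation is exactly $\rho\cdot y=y'$ in $Y$.

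Next I would translate the point-equality $\rho\cdot y=y'$ into data about invariant monomials. By Corollary \ref{cor:group} the evaluations of the basis $\mathcal B(A_{\rho\cdot y})$ determine the point $\rho\cdot y$, while Corollary \ref{itm:diff} guarantees that distinct points are separated by some such monomial; together they give that $\rho\cdot y=y'$ holds if and only if (i) $\rho\cdot y$ and $y'$ lie in the same block $Y_k$ of the partition of Theorem \ref{thm:gaugeinvs}, equivalently $A_{\rho\cdot y}=A_{y'}$, and (ii) $(\rho\cdot F)(m')=F'(m')$ for every $m'\in\mathcal B(A_{\rho\cdot y})$. Condition (i) is precisely the hypothesis that $\rho$ carries the zero-pattern of $\mathbf F$ to that of $\mathbf F'$; here I would cite Corollary \ref{cor:last}, which makes $A_{y}$ and the set $F^{-1}(0)$ mutually determined, so that matching $A$-groups is the same as matching the undefined/zero locus.

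It then remains to rewrite (ii) in terms of $\mathcal B(A_y)$. I would use the isomorphism $R_y\to R_{\rho\cdot y}$, $m\mapsto\rho\cdot m$, of the proposition preceding Proposition \ref{prop:permclosed}, which restricts to a bijection $A_y\to A_{\rho\cdot y}$ and lets me take $\mathcal B(A_{\rho\cdot y})=\rho\cdot\mathcal B(A_y)$. Writing $m'=\rho\cdot m$ with $m\in\mathcal B(A_y)$, the equivariance of $F$ from \eqref{eq:autf} and \eqref{eq:autphi}, in the form $(\rho\cdot F)(\rho\cdot m)=F(m)$, turns $(\rho\cdot F)(m')=F'(m')$ into $F(m)=F'(\rho\cdot m)$. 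Since each $m\in\mathcal B(A_y)\subset A_y$ has $F(m)\ne 0$, these evaluations are genuine nonzero scalars, and every step above is reversible, giving the asserted equivalence. Running the argument with $\rho^{-1}$ shows the criterion is symmetric in $\mathbf F$ and $\mathbf F'$, matching Corollary \ref{cor:compute}.

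The main obstacle is bookkeeping the partial-definedness of the evaluation map rather than any deep new input. The evaluation $m\mapsto F(m)$ is only a homomorphism away from $F^{-1}(0)$, so I must be careful that restricting attention to the finite set $\mathcal B(A_y)$ of everywhere-nonzero invariant monomials loses no information: this is exactly what condition (i) plus Corollary \ref{cor:last} secures, ensuring that once $\rho$ aligns the zero-patterns, agreement on $\mathcal B(A_y)$ forces agreement of the full points $\rho\cdot y$ and $y'$. The one place to double-check is that $\rho\cdot\mathcal B(A_y)$ really is a basis of $A_{\rho\cdot y}$ and that $F'(\rho\cdot m)$ is defined and nonzero whenever the criterion holds, both of which follow from the $Aut(R,B)$-equivariance and the block condition.
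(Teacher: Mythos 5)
Your proof is correct and follows essentially the route the paper intends for this corollary (which it leaves unproved): factor any monoidal equivalence into a permutation followed by a gauge equivalence, translate it to the equality $\rho\cdot y = y'$ in $Y$, and then combine Corollary \ref{cor:group}, Corollary \ref{itm:diff}, Corollary \ref{cor:last}, and the $Aut(R,B)$-equivariance of evaluations to convert that equality into the block condition plus agreement on $\mathcal B(A_y)$. One remark: your reading of the block condition as $A_{\rho\cdot y}=A_{y'}$ (i.e., $\rho$ carries the zero-pattern of $\mathbf F$ to that of $\mathbf F'$) is the right one; the literal condition $A_{\rho\cdot y}=A_y$ printed in the statement must be a typo, since it is inconsistent with Corollary \ref{cor:compute} and would make the forward direction fail for monoidally equivalent solutions with different zero sets, such as the permuted solutions in the $Rep(D(S_3))$ example.
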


\begin{remark}
Theorem \ref{thm:gaugeinvs} through Corollary \ref{cor:permeq} provide useful machinery for studying orbits.
Given a set $S$ of solutions to a set of pentagon equations, these may first be grouped according to the relation that for all $\mathbf F, \mathbf F'\in S$, $\mathbf F \sim \mathbf F$ if and only if there exists $\rho\in Aut(R,B)$ such that $F^{-1}(0)= \rho\cdot F'^{-1}(0)$.
This guarantees that for $Y=\sqcup_{i=1}^k Y_i$ and $\psi$ as in Proposition \ref{prop:gaugequotient}, $\mathbf F \sim \mathbf F'$ if and only if there exists $Y_i$ such that $\psi(\mathbf F) \in Aut(R,B)\cdot Y_i$ and $\psi(\mathbf F')\in Aut(R,B)\cdot Y_i$.

From Proposition \ref{prop:permclosed}, one is free to choose a representative $F^{-1}(0)$. 
One determines the sets $Y_k$ and $\eta(Y_k)$ using Proposition \ref{cor:group} and Corollary \ref{cor:permeq} where $\rho$ need only be in the group
$$Stab(Y_k)=\{\rho \in Aut(R,B)\vert \forall \mathbf F\in \psi^{-1}(Y_k), \rho \cdot F^{-1}(0)=F^{-1}(0)\}.$$ 
The number of gauge equivalence classes in the set $Aut(R,B)\cdot Y_k$ is then $\vert Y_k \vert \vert Aut(R,B) : Stab(Y_k)\vert$.
An example of this is given in Section \ref{ss:ds3} in computing the gauge and monoidal classes for categories Grothendieck equivalent to $Rep(D(S_3))$.
\end{remark}

Let $Z$ and $\eta$ be as in Proposition \ref{prop:monquotient} and choose $z\in Z$.
On $R_{\eta^{-1}(z)}$ the isomorphisms $\rho_R$ (properly restricted) are in fact automorphisms, and one recovers an action of $Aut(R,B)$ on $R_{\eta^{-1}(z)}$.

\begin{proposition}\label{thm:moninvs}
Let $\eta:Y\rightarrow Z$ be a quotient map as in Proposition \ref{prop:monquotient} and fix $z\in Z$.
The evaluations of $$R_{z}=\{r \in R_{\eta^{-1}(z)} \vert \rho \cdot r = r, \forall \rho \in Aut(R,B)\}$$
uniquely determine the evaluations of all $f_r\in \mathcal O_{Z,z}$.
\end{proposition}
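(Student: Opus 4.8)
The plan is to transfer the gauge-theoretic control established in Proposition~\ref{prop:evals} across the quotient $\eta\colon Y\to Z$ by taking $Aut(R,B)$-invariants on both sides. By part~(2) of Proposition~\ref{prop:monquotient} the fiber $\eta^{-1}(z)$ is a single $Aut(R,B)$-orbit $\{y_1,\dots,y_n\}\subset Y$ and $\mathcal O_{Z,z}=\mathcal O_Y(\eta^{-1}(z))^{Aut(R,B)}$; moreover $Aut(R,B)$ acts on $R_{\eta^{-1}(z)}=\bigcap_i R_{y_i}$ through the isomorphisms $\rho_R$ (which restrict to automorphisms on the $Aut(R,B)$-stable algebra $R_{\eta^{-1}(z)}$), and $R_z$ is precisely its invariant subalgebra. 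The goal is therefore to exhibit each $f_r\in\mathcal O_{Z,z}$ as an explicit rational expression in finitely many elements of $R_z$, so that the values $F(r)$ for $r\in R_z$ determine $F(f_r)$.

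First I would run the argument of Proposition~\ref{prop:evals} over the finite point set $\eta^{-1}(z)$ in place of a single point $y$, so that every $f\in\mathcal O_Y(\eta^{-1}(z))$ is represented by a ratio $g/h$ with $g,h\in R_{\eta^{-1}(z)}$ and with $h$ nowhere vanishing on the fiber. Given such a representation of an invariant $f_r$, the key device is the multiplicative norm $N=\prod_{\rho\in Aut(R,B)}(\rho\cdot h)$. Since $Aut(R,B)$ is finite and permutes the orbit $\eta^{-1}(z)$, each factor $\rho\cdot h$ again lies in $R_{\eta^{-1}(z)}$ and is nonvanishing on the fiber, so $N\in R_z$ is a nonvanishing invariant. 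Clearing denominators gives $f_r=(f_r\,N)/N$, and because $f_r$ and $N$ are both $Aut(R,B)$-invariant while $f_r\,N=g\prod_{\rho\neq\mathrm{id}}(\rho\cdot h)$ lies in the algebra $R_{\eta^{-1}(z)}$, the numerator $f_r\,N$ lies in $R_{\eta^{-1}(z)}^{Aut(R,B)}=R_z$. Hence $F(f_r)=F(f_r\,N)/F(N)$ is an explicit rational function of the two $R_z$-evaluations $F(f_r\,N)$ and $F(N)$, which is exactly what is required.

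The main obstacle I anticipate is producing the global ratio $g/h$ with $g,h\in R_{\eta^{-1}(z)}$ valid simultaneously at every point of the orbit. The difficulty is that the sets $F^{-1}(0)$, equivalently the groups $A_{y_i}$, may differ among the $y_i$ of the orbit (an orbit need not lie in a single $Y_k$ of Theorem~\ref{thm:gaugeinvs}), so a monomial nonvanishing at one $y_i$ may vanish at another and thus fail to lie in $R_{\eta^{-1}(z)}=\bigcap_i R_{y_i}$. I would handle this by using the decomposition $\mathcal O_Y(\eta^{-1}(z))\cong\prod_i\mathcal O_{Y,y_i}$ (valid since $Y$ is zero-dimensional with clopen points) to apply Proposition~\ref{prop:evals} componentwise, and then using transitivity of the $Aut(R,B)$-action on the fiber to rebuild the invariant $f_r$ from a single component. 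The norm $N$ (or, equivalently in characteristic $0$, the Reynolds average $\tfrac{1}{|Aut(R,B)|}\sum_\rho\rho\cdot(-)$) is exactly what assembles the componentwise data into an element of $R_z$. Verifying that this reassembly keeps numerator and denominator inside $R_z$ while preserving nonvanishing on the fiber is the crux; everything else is the bookkeeping already carried out in the gauge case.
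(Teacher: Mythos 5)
Your second paragraph, taken on its own hypotheses, is sound, and it in fact does more than the paper does at the corresponding step: the paper's proof simply declares that for a ratio $g/h$ with $g,h\in R_{\eta^{-1}(z)}$ to be $Aut(R,B)$-invariant ``both $g$ and $h$ must be $Aut(R,B)$-invariant,'' a claim that is false for an arbitrary representation (multiply numerator and denominator by a non-invariant unit) but whose intended content --- that some representation with invariant numerator and denominator exists --- is exactly what your norm $N=\prod_\rho\rho\cdot h$ plus the Reynolds average of $f_rN$ proves. Your parenthetical distinction between invariance of $f_rN$ as a function on the fiber and being a fixed element of $R_{\eta^{-1}(z)}$ (resolved by averaging) is also a real point the paper glosses over. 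So, \emph{conditional on} having one global representation $f_r=g/h$ with $g,h\in R_{\eta^{-1}(z)}$ and $h$ nowhere vanishing on $\eta^{-1}(z)$, your argument is complete and follows the same route as the paper's.

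The genuine gap is the step you yourself defer to the final paragraph, and the repair you sketch there would fail. Fibers of $\eta$ really do contain points with different zero sets: for $Rep(D(S_3))$ the first zero set of Table \ref{tab:zeroes1} has stabilizer of index six in $Aut(R,B)$, so each of the two corresponding monoidal classes is a fiber $\eta^{-1}(z)$ whose six points carry six distinct zero sets. Applying Proposition \ref{prop:evals} componentwise at $y_1$ yields $g_1,h_1\in R_{y_1}$ only, and the monomials produced there (of the form $h_jg^{-1}$) carry negative exponents; for $\rho$ not stabilizing the zero set, $\rho\cdot h_1$ lies in $R_{\rho\cdot y_1}$ and its negative-exponent variables can vanish at other points $y_i$ of the fiber, so the monomials of $\prod_\rho\rho\cdot h_1$ lie in no $M_{y_i}$, the product is not an element of $R_{\eta^{-1}(z)}$ (let alone $R_z$), and its evaluation on the fiber is simply undefined. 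If instead you clear negative exponents and use polynomial representatives, the problem reappears in a different form: a denominator vanishing at even one point of the fiber forces $\prod_\rho\rho\cdot h$ to vanish identically on the fiber, so it cannot serve as $N$. Thus ``transitivity plus the norm'' cannot assemble local data into the needed global representation; what is missing is a different idea, not bookkeeping. One that works: since $\eta^{-1}(z)$ is clopen in the finite scheme $Y$, every $f_r\in\mathcal O_{Z,z}$ extends to a global $G$-invariant regular function $p$ on $X$; replace $p$ by its $Aut(R,B)$-average and then by its $t_A$-weight-zero part $p_0$. Linear independence of characters of the torus $G$ gives $p_0=p$ at every point, and $p_0$ is a linear combination of nonnegative-exponent invariant monomials, hence an element of $R_z$ with $F(f_r)=F(p_0)$ --- no denominators at all.
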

\begin{proof}
The proof of this is similar to the first part of the proof of Proposition \ref{prop:evals}.
One starts with the map $\zeta_R:R_z\rightarrow \mathcal O_{Z,z}$ and notes that for any rational function $g/h$ with $g,h\in R_{\eta^{-1}(z)}$ to be $Aut(R,B)$-invariant both $g$ and $h$ must be $Aut(R,B)$-invariant.
\end{proof}
\begin{corollary}
Let $\mathbf F, \mathbf F'\in X$ such that $(\eta \circ \psi)(\mathbf F)=z$ and $(\eta \circ \psi)(\mathbf F')=z'$ and $z \neq z'$. 
Then there exists an $Aut(R,B)$-invariant linear combination of $G$-invariant monomials $l\in R_{z}$ such $F(l)\neq F'(l)$ or $F'(l)$ is undefined.
\end{corollary}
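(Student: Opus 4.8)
The plan is to argue by contraposition and reduce to the explicit monoidal-equivalence criterion of Corollary~\ref{cor:permeq}, driving the separation with the same reductivity principle used in Proposition~\ref{prop:gorbs}, but now for the \emph{finite} group $Aut(R,B)$. Thus I assume the desired $l$ does not exist, i.e. that for every $l\in R_z$ the evaluation $F'(l)$ is defined and $F(l)=F'(l)$, and aim to conclude $(\eta\circ\psi)(\mathbf F)=(\eta\circ\psi)(\mathbf F')$, contradicting $z\neq z'$. The guiding picture is that, after fixing the vanishing pattern, gauge classes in a fiber of $\psi$ are recorded by characters of the free abelian group $A_{\eta^{-1}(z)}$ (by Lemma~\ref{lem:big} and Corollary~\ref{cor:group}), so $\eta^{-1}(z)$ and $\eta^{-1}(z')$ are distinct $Aut(R,B)$-orbits of such characters, and $R_z$ is exactly the ring of $Aut(R,B)$-invariant functions on the ambient torus.

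First I would pin down the vanishing pattern. A key point, which makes the contrapositive hypothesis usable, is that each $l\in R_z$ is $Aut(R,B)$-invariant, so replacing $\mathbf F'$ by $\rho\cdot\mathbf F'$ for any $\rho\in Aut(R,B)$ leaves both $z'$ and the hypothesis $F(l)=F'(l)$ unchanged, since $(\rho\cdot F')(l)=F'(\rho^{-1}\cdot l)=F'(l)$. Now if the vanishing pattern of $\mathbf F'$ were not $Aut(R,B)$-conjugate to that of $\mathbf F$, then by Theorem~\ref{thm:gaugeinvs} and Proposition~\ref{prop:permclosed} some generator $m\in\mathcal B(A_{\eta^{-1}(z)})$ --- which has $F(m)\neq 0$ --- would evaluate under $F'$ to $0$ or be undefined; passing to its $Aut(R,B)$-orbit sum produces an $l\in R_z$ with $F'(l)$ undefined or unequal to $F(l)$, contradicting the hypothesis. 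Hence we may assume, after the harmless $\rho$-adjustment above, that $A_{\psi(\mathbf F')}=A_{\psi(\mathbf F)}=:A$.

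With the pattern fixed, $\mathbf F$ and $\mathbf F'$ induce characters $\chi,\chi':A\to k^\times$ by $m\mapsto F(m)$ and $m\mapsto F'(m)$; these are well defined and nonzero by the very definition of $A$. The hypothesis asserts that $\chi$ and $\chi'$ agree on every element of $R_z$, which by Proposition~\ref{thm:moninvs} is (a ring of representatives for) the $Aut(R,B)$-invariant regular functions on the affine quotient datum at $z$, i.e. on $Spec\,k[A]$. Since $Aut(R,B)$ is finite and acts algebraically, its invariants separate orbits --- the same statement as \cite[Corollary~1.2]{MR1304906} specialized to a finite (hence reductive) group --- so $\chi$ and $\chi'$ must lie in the same $Aut(R,B)$-orbit: there is $\sigma\in Aut(R,B)$ with $F'(m)=F(\sigma\cdot m)$ for all $m\in\mathcal B(A)$. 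Together with $A_{\sigma\cdot\psi(\mathbf F)}=A$, Corollary~\ref{cor:permeq} then forces $\mathbf F$ and $\mathbf F'$ to be monoidally equivalent, so $z=z'$, the desired contradiction.

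The main obstacle is the separation step of the third paragraph: converting ``$\chi,\chi'$ agree on all of $R_z$'' into ``$\chi,\chi'$ are $Aut(R,B)$-conjugate''. This needs $R_z$ to be genuinely the full invariant ring, not merely a separating subalgebra missing the relevant functions, and this is exactly the content I would import from Proposition~\ref{thm:moninvs} (that $R_z$ determines all of $\mathcal O_{Z,z}$). A secondary, more clerical difficulty is the vanishing-pattern bookkeeping of the second paragraph: because monomial evaluation is only partially defined, one must ensure that the orbit sum used to detect an incompatible pattern does not accidentally cancel to a value coinciding with $F(l)$; choosing $m$ in a suitable $Aut(R,B)$-isotypic position, or arguing one $y_i$ at a time across the orbit $\eta^{-1}(z)$, handles this but requires care.
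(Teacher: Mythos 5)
Your proposal does not follow the paper's route, and as written it contains two genuine gaps.

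The first gap is the vanishing-pattern reduction. You claim that if the zero pattern of $\mathbf F'$ is not $Aut(R,B)$-conjugate to that of $\mathbf F$, then some generator $m\in\mathcal B(A_{\eta^{-1}(z)})$ must evaluate under $F'$ to $0$ or be undefined, citing Theorem~\ref{thm:gaugeinvs} and Proposition~\ref{prop:permclosed}. Neither result gives this. Together with Corollary~\ref{cor:last} they say that \emph{equal} groups $A_y$ correspond to \emph{equal} zero sets; they do not exclude the one-sided situation in which $(F')^{-1}(0)$ is strictly contained in every conjugate of $F^{-1}(0)$ (for instance $(F')^{-1}(0)=\emptyset$ while $F^{-1}(0)\neq\emptyset$). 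In that case $F'$ is defined and nonzero on every rational monomial, your detection step produces no separating $l$, and yet the patterns are not conjugate. The cancellation problem you flag yourself (an orbit sum agreeing ``by accident'') is the lesser issue: in characteristic $0$ it can be repaired by also using orbit sums of powers $m^N$ and Newton's identities; the one-sided inclusion cannot be repaired this way.

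The second gap is that your separation step runs on the wrong group. By definition $R_z$ is the $Aut(R,B)$-invariant part of $R_{\eta^{-1}(z)}$, the algebra spanned by $M_{\eta^{-1}(z)}$, i.e.\ by monomials well defined on the \emph{entire} fiber $\eta^{-1}(z)$; the relevant group of units is $A_{\eta^{-1}(z)}=\bigcap_{\rho}\rho\cdot A_{y}$, not $A:=A_{\psi(\mathbf F)}$. Indeed $A_y$ is in general not even $Aut(R,B)$-stable (Proposition~\ref{prop:permclosed} gives $\rho\cdot A_y=A_{\rho\cdot y}$), so ``the ring of $Aut(R,B)$-invariant regular functions on $Spec\,k[A]$'' is not well posed, and Proposition~\ref{thm:moninvs} does not assert that $R_z$ is such a ring --- it only says that $R_z$-evaluations determine the germs in $\mathcal O_{Z,z}$, a statement internal to the single point $z$. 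Consequently, even after repairing the first gap, a character-separation (or linear-independence-of-characters) argument applied to $R_z$ can only produce $\sigma\in Aut(R,B)$ with $F'(\sigma\cdot m)=F(m)$ for $m$ in the \emph{smaller} group $A_{\eta^{-1}(z)}$, whereas Corollary~\ref{cor:permeq} requires agreement on all of $\mathcal B(A_{y})$ together with equality of the full groups. Upgrading from $A_{\eta^{-1}(z)}$ to $A_y$ needs an analogue of Lemma~\ref{lem:important} for the intersected group, which neither the paper nor your argument provides.

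For contrast, the paper offers this corollary as an immediate instance of the Proposition~\ref{prop:gorbs} template combined with Proposition~\ref{thm:moninvs}: since $z\neq z'$ and $Z$ is a geometric quotient, \cite[Corollary 1.2]{MR1304906} gives an $Aut(R,B)\ltimes G$-invariant regular function equal to $1$ on $(\eta\circ\psi)^{-1}(z)$ and $0$ on $(\eta\circ\psi)^{-1}(z')$; by linear reductivity it has an invariant polynomial representative, and an invariant polynomial is automatically an $Aut(R,B)$-invariant linear combination of $G$-invariant monomials with nonnegative exponents, hence an element of $R_z$ that is defined everywhere and separates $\mathbf F$ from $\mathbf F'$. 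That top-down argument never needs the vanishing-pattern bookkeeping on which your proof founders.
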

With this the proof of \ref{thm:main2} is finished.

\begin{remark}
It is important to note that the elements of $R_{z}$ are not determined by the elements of $\{m\in M_{\eta^{-1}(z)} \vert \rho \cdot m = m, \forall \rho \in Aut(R,B)\}$.
As a counter-example consider a fusion category Grothendieck equivalent to $\Z_3$ with $L=\{1,\omega,\omega^2\}$.
The involution $\rho:L\rightarrow L, \rho(1)=1, \rho(\omega)=\omega^2,\rho(\omega^2)=\omega$ is an automorphism of the fusion rules.
The products 
\begin{align*}
&\tenj{\Phi_{\omega \omega \omega}^1,1,\omega^2,1,1,\omega^2,1}\tenj{\Phi_{\omega \omega^2 \omega}^\omega,1,1,1,1,1,1}\tenj{\Phi_{\omega 1 \omega}^\omega,1,\omega,1,1,\omega,1} & and \\
&\tenj{\Phi_{\omega^2 \omega^2 \omega^2}^1,1,\omega,1,1,\omega,1}\tenj{\Phi_{\omega^2 \omega \omega^2}^{\omega^2},1,1,1,1,1,1}\tenj{\Phi_{\omega^2 1 \omega^2}^{\omega},1,\omega^2,1,1,\omega^2,1}
\end{align*}  
are not invariant under $\rho$ but their sum is.
\end{remark}

\section{Computing Invariants in Practice}\label{s:practical}

In general, it's going to be difficult to determine the sets for Theorem \ref{thm:main1} and Theorem \ref{thm:main2}.
However, for the multiplicity-free case, this is surprisingly straightforward. 

In Lemma \ref{lem:equiv} we prove that for $\mathbf F$ and $\mathbf F'\in X(R,B)$ with $\psi(\mathbf F)=y$ and $\psi(\mathbf F')=y'$, $$F^{-1}(0)=(F')^{-1}(0)\Leftrightarrow M_y = M_{y'} \Leftrightarrow A_y = A_{y'}.$$
Then for $\mathbf F\in X(R,B)$ with $\psi(\mathbf F)=y\in Y(R,B)$ and $F^{-1}(0)$, Algorithm \ref{alg:invs} computes a basis $\mathcal B(A_y)$ which from \ref{cor:group} is sufficient to check for gauge equivalence. 

\begin{algorithm}
\caption{Computing a basis for gauge invariant products}
\label{alg:invs}
\begin{algorithmic}
\Procedure{Invs}{$F^{-1}(0)$} 
	\State $\Phi^\times\gets \Phi-F^{-1}(0)$
	\For{$\phi_i \in \Phi^\times$}
		\State $\mathbf A_{ij}\gets (k_{ij}\vert t(\phi_i)=\prod_{j=1}^{\vert \Gamma \vert} \gamma_j^{k_{ij}})$ \Comment{$A_{ij}$ is the power of $\gamma_j$ in $t(\phi_i)$.}
	\EndFor
	\State $(\mathbf H,\mathbf H\mathbf A)\gets hnf(\mathbf A)$ \Comment{Compute the Hermite Normal Form of $\mathbf A$}
	\State $\mathcal H\gets \{\mathbf H_i \vert (\mathbf H \mathbf A)_i \text{ is a zero vector.}\}$.
	\ForAll{$\mathbf H_i\in \mathcal H$}
		\State $m_i \gets \prod_{j=1}^{\vert \Phi^\times \vert} \phi_j^{\mathbf H_{ij}}, \phi_j \in \Phi^\times$\Comment{Compute the monomials.}
	\EndFor
\EndProcedure
\State {\bf return} $\mathcal B(A_y)=\{m_i \vert \mathbf H_i \in \mathcal H\}$.\Comment{Return the basis $\mathcal B(A_y)$.}
\end{algorithmic}
\end{algorithm}

Again, for brevity fix $X$, $\Gamma$, and $\Phi$.
The image of $\phi\in \Phi$ under $t_{A}$ is an element in $A(\Gamma)$, which can be represented as a vector of length $\vert \Gamma \vert$ with integer coefficients.
This yields a $\vert \Gamma \vert \times \vert \Phi \vert$ matrix representing a system of $\vert \Phi \vert$ linear equations in $\vert \Gamma \vert$ variables which is equivalent to the equations \eq{gauge}.

Define $\Phi^\times=\Phi-F^{-1}(0)$ and the resulting $\vert \Gamma \vert \times \vert \Phi^\times \vert$ matrix to be $\mathbf A$.
This system of equations is canonically solved by choosing orderings of $\Gamma$ and $\Phi^\times$ then computing the Hermite normal form $\mathbf H \mathbf A $. 
Computing $\mathbf H \mathbf A$ is at worst an $\mathcal O(\vert \Phi\vert\vert \Gamma \vert^4 \log^2(\mathcal M))$\cite{MR2049753} computation where $\mathcal M$ is a bound on the entries of $\mathcal A$.

Given an ordering on $\Phi^\times$, the $\vert \Phi^\times \vert\times \vert \Phi^\times \vert$ matrix $\mathbf H$ is unique.
Rows of $\mathbf H$ correspond to products of $\phi$'s whose gauge transformation is given by the same row of $\mathbf H \mathbf A$.
Thus the product in row $i$ of $\mathbf H$ is gauge invariant if and only if row $i$ of $\mathbf H\mathbf A$ is identically zero.
Those rows of $\mathbf H$ which correspond to identically zero rows of $\mathbf H \mathbf A$ define a basis for $A_{y}$.
This proves the second half of Theorem \ref{thm:main2a}.

\begin{remark}
It is a natural question to ask whether or not the invariants from Theorem \ref{thm:main2} can be obtained without solving the pentagon equations - i.e. whether or not one can compute $Y$ directly.
In Section \ref{ss:exfib} we provide a complete example from start to finish and discuss what may be necessary in general.
\end{remark}
\section{Examples for $G$-invariants and Quotient schemes}\label{s:examples}
For simplicity, we will adopt the following notation for the rest of the section:
\begin{align*}
\Phi_{abc}^{d;ef}&:=\tenj{\Phi_{abc}^d,1,e,1,1,f,1} \\
\end{align*}

In addition to the following examples, many more are computable from the Mathematica package listed in the introduction.

\subsection{Quotient Scheme for Fibonacci categories}\label{ss:exfib}

As a first example which is short enough to compute explicitly, we re-derive the classification of fusion categories Grothendieck equivalent to representation category of the even half of $\mathfrak{su}(2)_3$.
These are the Fibonacci and Yang-Lee categories.
We will denote their Grothendieck ring by $\mathbf{Fib}$ with basis elements $\{1,\tau\}$ such with the non-trivial fusion rule $\tau \otimes \tau \cong 1 \oplus \tau$.
The elements of $\Phi(\mathbf{Fib})$ and images under $t_A$ are given in Table \ref{tab:fibvars}

The first step is to identify the sets $F^{-1}(0), \forall \mathbf F \in X$.
It is well known that for all $\mathbf F\in X(\mathbf{Fib})$, $F_{abc}^{d;ef}\neq 0$ for all $\Phi_{abc}^{d;ef}\in \Phi(\mathbf{Fib})$. 
This can be determined by categorical considerations and by the procedure in \cite[2.5.2]{Bonderson2007}.
There are no automorphisms, and thus $Z(\mathbf{Fib})=Y(\mathbf{Fib})$.
Then, using the ordering in \ref{tab:fibvars}, the matrix $\mathbf A$ coming from equations of the form \eq{gauge}, $\mathbf H$, and $\mathbf H \mathbf A$ are given in Tables \ref{tab:a} and \ref{tab:ha}.
After reordering, these yield the basis for $A_{\mathbf{Fib}}$ given in Table \ref{tab:afibbasis}.

\begin{table}[t]
\makebox[\textwidth][c]{
\begin{tabular}{| c | c || c | c|| c | c || c | c |}
\hline
$\Phi_{111}^{1;11}$ & 1 & $\Phi_{1 1 \tau}^{\tau; 1 \tau}$ & $t_{1 1}^1 (t_{1\tau}^\tau)^{-1}$ & $\Phi_{1\tau 1}^{\tau; \tau \tau}$ & $ 1 $ &  $\Phi_{1 \tau \tau}^{1;\tau 1}$ & $t_{1\tau}^\tau(t_{1 1}^1)^{-1}$ \\ \hline
$\Phi_{1\tau \tau}^{\tau; \tau \tau}$ & 1 & $\Phi_{\tau 1 1}^{\tau; \tau 1}$ & $t_{\tau 1}^\tau (t_{11}^1)^{-1}$ & $\Phi_{\tau 1 \tau}^{1;\tau \tau}$ & $t_{\tau 1}^\tau (t_{1\tau}^\tau)^{-1}$ & $\Phi_{\tau 1 \tau}^{\tau; \tau \tau}$ & $t_{\tau 1}^\tau (t_{1 \tau}^\tau)^{-1}$ \\ \hline
$\Phi_{\tau \tau 1}^{1;1 \tau}$ & $t_{1 1}^1 (t_{\tau 1}^\tau)^{-1}$ & $\Phi_{\tau \tau 1}^{\tau; \tau \tau}$ & 1 & $\Phi_{\tau \tau \tau}^{1; \tau \tau}$ & 1 & $\Phi_{\tau \tau \tau}^{\tau; 1 1}$ & $t_{1 \tau}^\tau (t_{\tau 1}^\tau)^{-1}$ \\ \hline
$\Phi_{\tau \tau \tau}^{\tau; 1 \tau}$ & $t_{\tau \tau}^1 t_{1\tau}^\tau(t_{\tau \tau}^\tau)^{-2}$ & $\Phi_{\tau \tau \tau}^{\tau; \tau 1}$ & $(t_{\tau \tau}^\tau)^2 (t_{\tau \tau}^1 t_{\tau 1}^\tau)^{-1}$ & $\Phi_{\tau \tau \tau}^{\tau; \tau \tau}$ & 1 & & \\ \hline
\end{tabular}}
\caption{The variables $\Phi$ for Fibonacci and Yang-Lee categories.}
\label{tab:fibvars}
\end{table}

\begin{table}[b]
\begin{displaymath}
\mathbf A=
\left(\begin{array}{ccccc}
0 & 0 & 0 & 0 & 0\\
1 & -1 & 0 & 0 & 0 \\
0 & 0 & 0 & 0 & 0\\
-1 & 1 & 0 & 0 & 0 \\
0 & 0 & 0 & 0 & 0 \\
-1 & 0 & 1 & 0 & 0 \\
0 & -1 & 1 & 0 & 0 \\
0 & -1 & 1 & 0 & 0 \\
1 & 0 & -1 & 0 & 0 \\
0 & 0 & 0 & 0 & 0 \\
0 & 0 & 0 & 0 & 0 \\
0 & 1 & -1 & 0 & 0 \\
0 & 0 & -1 & -1 & 2 \\
0 & 1 & 0 & 1 & -2 \\
0 & 0 & 0 & 0 & 0 \\
\end{array}\right),
\mathbf{HA} = 
\left(\begin{array}{ccccc}
1 & 0 & 0 & 1 & -2 \\
0 & 1 & 0 & 1 & -2 \\
0 & 0 & 1 & 1 & -2 \\
0 & 0 & 0 & 0 & 0 \\
0 & 0 & 0 & 0 & 0 \\
0 & 0 & 0 & 0 & 0 \\
0 & 0 & 0 & 0 & 0 \\
0 & 0 & 0 & 0 & 0 \\
0 & 0 & 0 & 0 & 0 \\
0 & 0 & 0 & 0 & 0 \\
0 & 0 & 0 & 0 & 0 \\
0 & 0 & 0 & 0 & 0 \\
0 & 0 & 0 & 0 & 0 \\
0 & 0 & 0 & 0 & 0 \\
0 & 0 & 0 & 0 & 0 \\
\end{array}\right)
\end{displaymath}
\caption{The matrices $\mathbf A$ and $\mathbf{HA}$ for $X(\mathbf{Fib})$.}
\label{tab:a}
\end{table}

\begin{table}[t]
\begin{displaymath}
\mathbf{H}=
\left(\begin{array}{ccccccccccccccc}
 0 & 0 & 0 & 0 & 0 & -1 & 0 & 0 & 0 & 0 & 0 & 0 & -1 & 0 & 0 \\
 0 & -1 & 0 & 0 & 0 & -1 & 0 & 0 & 0 & 0 & 0 & 0 & -1 & 0 & 0 \\
 0 & 0 & 0 & 0 & 0 & 0 & 0 & 0 & 0 & 0 & 0 & 0 & -1 & 0 & 0 \\
 0 & 1 & 0 & 1 & 0 & 0 & 0 & 0 & 0 & 0 & 0 & 0 & 0 & 0 & 0 \\
 0 & 0 & 0 & 0 & 1 & 0 & 0 & 0 & 0 & 0 & 0 & 0 & 0 & 0 & 0 \\
 1 & 0 & 0 & 0 & 0 & 0 & 0 & 0 & 0 & 0 & 0 & 0 & 0 & 0 & 0 \\
 0 & -1 & 0 & 0 & 0 & -1 & 1 & 0 & 0 & 0 & 0 & 0 & 0 & 0 & 0 \\
 0 & -1 & 0 & 0 & 0 & -1 & 0 & 1 & 0 & 0 & 0 & 0 & 0 & 0 & 0 \\
 0 & 0 & 0 & 0 & 0 & 1 & 0 & 0 & 1 & 0 & 0 & 0 & 0 & 0 & 0 \\
 0 & 0 & 0 & 0 & 0 & 0 & 0 & 0 & 0 & 1 & 0 & 0 & 0 & 0 & 0 \\
 0 & 0 & 0 & 0 & 0 & 0 & 0 & 0 & 0 & 0 & 1 & 0 & 0 & 0 & 0 \\
 0 & 1 & 0 & 0 & 0 & 1 & 0 & 0 & 0 & 0 & 0 & 1 & 0 & 0 & 0 \\
 0 & 0 & 1 & 0 & 0 & 0 & 0 & 0 & 0 & 0 & 0 & 0 & 0 & 0 & 0 \\
 0 & 1 & 0 & 0 & 0 & 1 & 0 & 0 & 0 & 0 & 0 & 0 & 1 & 1 & 0 \\
 0 & 0 & 0 & 0 & 0 & 0 & 0 & 0 & 0 & 0 & 0 & 0 & 0 & 0 & 1 \\
\end{array}\right)
\end{displaymath} 
\caption{The matrix $\mathbf H$ for $X(\mathbf{Fib})$.}
\label{tab:ha}
\end{table}

\begin{table}[b]
\makebox[\textwidth][c]{
\begin{tabular}{| c | c || c | c || c | c |}
\hline
$s_1$ & $\Phi_{1 1 1}^{1; 1 1}$ & $s_2$ & $\Phi_{1\tau 1}^{\tau; \tau \tau}$ & $s_3$ & $\Phi_{1\tau\tau}^{\tau;\tau\tau}$ \\\hline
$s_4$ & $\Phi_{\tau\tau 1}^{\tau;\tau\tau}$ & $s_5$ & $\Phi_{\tau\tau\tau}^{1;\tau\tau}$ & $s_6$ & $\Phi_{\tau \tau \tau}^{\tau; \tau \tau}$ \\ \hline 
$s_7$ & $\Phi_{1 1 \tau}^{\tau; 1 \tau} \Phi_{1\tau\tau}^{1;\tau 1}$ & $s_8$ & $\Phi_{\tau 1 1}^{\tau; \tau 1} \Phi_{\tau \tau 1}^{1;1 \tau}$ & $s_9$ & $\Phi_{1 1 \tau}^{\tau;1\tau}\Phi_{\tau 1 1}^{\tau; \tau 1}\Phi_{\tau\tau\tau}^{\tau; 1 1}$ \\ \hline
$s_{10}$ & $\Phi_{\tau 1 \tau}^{1; \tau \tau}(\Phi_{1 1 \tau}^{\tau; 1 \tau}\Phi_{\tau 1 1}^{\tau; \tau 1})^{-1}$ & $s_{11}$ & $\Phi_{\tau 1 \tau}^{\tau;\tau\tau}(\Phi_{1 1 \tau}^{\tau; 1 \tau}\Phi_{\tau 1 1}^{\tau; \tau 1})^{-1}$ & $s_{12}$ & $\Phi_{1 1 \tau}^{\tau; 1 \tau}\Phi_{\tau 1 1}^{\tau; \tau 1}\Phi_{\tau\tau\tau}^{\tau;1\tau}\Phi_{\tau\tau\tau}^{\tau;\tau 1}$ \\\hline
\end{tabular}}
\caption{A basis for $A_{fib}$.}
\label{tab:afibbasis}
\end{table}

Next, one divides out by non-zero terms in the pentagon equations to obtain a set of gauge invariant equations in the gauge invariant monomials as follows: 
For simplicity, let $p_1$ and $p_2$ be the only two nonzero terms in some \eq{F3} with $p_1 - p_2 = 0$.
Without loss of generality, one may take $p_1 (p_2)^{-1}=m_1 \in \mathcal B(A_y)$ and obtains
\begin{align*}
(p_2)^{-1}(p_1 - p_2) &=0 \\
\Rightarrow p_1 (p_2)^{-1} - 1 &= 0 \\
\Rightarrow m_1 -1 &= 0.
\end{align*}
The last line is manifestly gauge invariant.

All equations of the form \eq{F3} can be localized in this way with respect to all non-zero terms. 
All terms of $Det(\Phi_{abc}^d)$ have the same gauge class, and so can similarly be localized.
There are only ever finitely many equations and finitely many terms and so there are only finitely many such localizations.
This yields a finite set of equations in gauge invariant monomials, i.e. an ideal $I$ in some $k[\mathcal B(A_{Y_i})]$.

Applying this procedure to $X(\mathbf{Fib})$ one finds that the evaluations for $s_1, \ldots, s_{12}$ are determined by the equations

\begin{align}
s_1=s_2=s_3=s_4=s_5=s_7=s_8=s_{10}=s_{11}=1 \label{eq:fib1}\\
s_9=s_{12}=-s_6 \label{eq:fib2}\\
(s_6)^2 - s_6 - 1 = 0\label{eq:fib3}.
\end{align}

the solutions to which are determined by $s_6=\frac 1 2 (1\pm \sqrt{5})$.
We note that the equations \eq{fib1}-\eq{fib3} are only those which determine the evaluations for all $s_i\in \mathcal B(A_{\mathbf{Fib}})$.
We have omitted most of the relations defining the associated ideal $I$, but it is straightforward to compute that $I$ is not radical.
Thus $Y$ is not itself an algebraic set.

While simple, this example encapsulates everything about the construction in Section \ref{s:quotients}.
It's easy to see that one can completely reconstruct the associativity information for the categories from the evaluations of $s_1,\ldots, s_{12}$.
In this way the result has another interpretation: $Y(\mathbf{Fib})$ is the moduli space for $\mathbf{Fib}$. 
Its points are monoidal equivalence classes of categories and these are parameterized by the zero-loci of $x^2-x-1$.

This example is made easier by the fact that for all $\mathbf F \in X(\mathbf{Fib}), F^{-1}(0)=\emptyset$. 
For categories without zeroes - e.g. the Tambara-Yamagami categories associated to a commutative group $G$ - the above could be used to compute $Y$ without modification.

In general, one must first identify the sets $F_i^{-1}(0)$ in order to compute a basis for the $A_{Y_i}$. 
For the case where $\mathcal K_0(\mathcal C)$ has unitary categories, \cite[2.5.2]{Bonderson2007} presents a heuristic algorithm which is conjectured to always determine the sets $F_i^{-1}(0)$ and thus the groups $A_{Y_i}$.
Considering the $F^{-1}(0)$ individually, one then exploits the partitioning Theorem \ref{thm:gaugeinvs}.

\subsection{Gauge and monoidal classes for $Rep(D(S_3))$}\label{ss:ds3}

In this example we use the invariants of \ref{ss:ifs} to determine gauge and monoidal equivalence classes for fusion categories Grothendieck equivalent to the representation category of the quantum double of $S_3$.
In this case, we take as our starting point a collection of solutions to the pentagon equations for $Rep(D(S_3))$.
The number of matrices involved is large so rather than printing them they have been made available at \href{http://mktitsworth.com/fusion-categories/}{http://mktitsworth.com/fusion-categories}.
This package includes the $F$ and $R$ matrices, pivotal coefficients, and a Mathematica notebook with the computations referenced here-in.
Our classification is complete, however the solutions were found using computerized methods.

The Grothendieck ring will be referred to as $Rep(D(S_3))$.
Let the basis elements be $\{1,\epsilon,\beta_1, \alpha_+,\alpha_-, \beta_2,\beta_3,\beta_4 \}$ with Frobenius-Perron dimensions $\{1,1,2,3,3,2,2,2\}$ respectively.
The fusion rules are commutative and each $\beta_i$ forms a $Rep(S_3)$ subcategory together with $\{1,\epsilon\}$.
Together the $\{1,\epsilon,\beta_i\}$ yield the adjoint subcategory $Rep(D(S_3))_{ad}$ with $\beta_i \otimes \beta_j =\beta_k \oplus \beta_l$ with $i\neq j\neq k \neq l$.
The remaining fusion rules are then given by

\begin{align*}
\epsilon \otimes \alpha_\pm &= \alpha_\mp, & \beta_i \otimes \alpha_\pm &= \alpha_+ \oplus \alpha_-, \\
\alpha_\pm\otimes \alpha_\pm &= 1 \bigoplus_{i=1}^4 \beta_i, & \alpha_\pm \otimes \alpha_\mp &= \epsilon \bigoplus_{i=1}^4 \beta_i
\end{align*}

The automorphism group for $Rep(D(S_3))$ is $\Z_2\times S_4$. 

There are five monoidal equivalence classes - two for each set of $i,j$ conditions on zeroes in the first row of Table \ref{tab:zeroes1} and one for the zeroes in the second row.
All monoidal classes admit unitary structures - i.e. all $F$-matrices can be made unitary - but only four admit braidings.
The allowable braidings correspond to the double and twisted double of $S_3$ as well as the two non-group theoretical categories discovered by \cite{MR2587410}.
The modular data for these categories is readily available there.

To our knowledge the final equivalence class is new.
In addition to those properties below which distinguish it from all others, its pivotal structures are also distinct:
In the braided cases, the Frobenius-Schur indicators for the $\alpha_{\pm}$ objects are always equal, and in the non-braided case they are opposite.
It is straightforward to compute this given the $F$-matrices.

\subsubsection{Classification by Invariants}

We proceed via our observations \ref{thm:gaugeinvs} and \ref{prop:permclosed}.
We first partition the points of $Y(Rep(D(S_3)))$ via the equivalence relation $$y_i\sim y_j \Leftrightarrow M_y=M_{y'}\Leftrightarrow F^{-1}(0)=F^{'-1}(0), \forall \mathbf F\in \psi^{-1}(y) \text{ and }\forall F'\in \psi^{-1}(y).$$
We then partition the points of $Z(Rep(D(S_3)))$ via $$z_i\sim z_j \Leftrightarrow \forall y_k\in \eta^{-1}(z_i) \exists y_l \in \eta^{-1}(z_j)\text{ and }\rho \in Aut(D(S_3))\vert M_{y_k}=M_{\rho\cdot y_l}$$ which is to say that the closed sub-schemes as defined in \ref{thm:gaugeinvs} for $y_k$ is permuted by $\rho\in Aut(R,B)$ to that of $y_l$. 
We then determine a set of nonzero invariants $m_1,\ldots,m_8$ defined on all of $Y(Rep(D(S_3)))$, each of which distinguishes between gauge equivalence classes and whose sum determines monoidal classes.

\begin{table}[t]
\makebox[\textwidth][c]{
\begin{tabular}{|l|r|}\hline
&  \\
$\Phi_{\beta_i \alpha_\pm \beta_j}^{\alpha_\pm; \alpha_\pm \alpha_\mp}$, $\Phi_{\beta_i \alpha_\pm \beta_j}^{\alpha_\pm; \alpha_\mp \alpha_\pm}$, $\Phi_{\beta_i \alpha_\pm \beta_j}^{\alpha_\mp; \alpha_\pm \alpha_\pm}$, $\Phi_{\beta_i \alpha_\pm \beta_j}^{\alpha_\mp; \alpha_\mp \alpha_\mp}$ & $i\neq j\in \{\beta_1, \beta_2\}\text{ or }i=j\in\{\beta_3,\beta_4\}$\\
& \\
$\Phi_{\alpha_\pm \beta_i \alpha_\pm}^{\beta_j; \alpha_\pm \alpha_\mp}$, $\Phi_{\alpha_\pm \beta_i \alpha_\pm}^{\beta_i; \alpha_\pm \alpha_\mp}$, $\Phi_{\alpha_\pm \beta_i \alpha_\mp}^{\beta_j; \alpha_\pm \alpha_\pm}$, $\Phi_{\alpha_\pm \beta_i \alpha_\mp}^{\beta_j; \alpha_\mp \alpha_\mp}$ & or \\
& \\
$\Phi_{\alpha_\pm \alpha_\pm \alpha_\pm}^{\alpha_\mp; \beta_i \beta_j}$, $\Phi_{\alpha_\pm \alpha_\pm \alpha_\mp}^{\alpha_\pm; \beta_i \beta_j}$, $\Phi_{\alpha_\pm \alpha_\mp \alpha_\pm}^{\alpha_\pm; \beta_i \beta_j}$, $\Phi_{\alpha_\mp \alpha_\pm \alpha_\pm}^{\alpha_\pm; \beta_i \beta_j}$ & $i\neq j\in \{\beta_1, \beta_2\}\text{ or }i\neq j\in\{\beta_3,\beta_4\}$\\
& \\ \hline
& \\
$\Phi_{\beta_i \alpha_\pm \beta_i}^{\alpha_\pm; \alpha_\pm \alpha_\mp}$, $\Phi_{\beta_i \alpha_\pm \beta_i}^{\alpha_\pm; \alpha_\mp \alpha_\pm}$, $\Phi_{\beta_i \alpha_\pm \beta_i}^{\alpha_\mp; \alpha_\pm \alpha_\pm}$, $\Phi_{\beta_i \alpha_\pm \beta_i}^{\alpha_\mp; \alpha_\mp \alpha_\mp}$ & \\
& \\
$\Phi_{\alpha_\pm \beta_i \alpha_\pm}^{\beta_i; \alpha_\pm \alpha_\mp}$, $\Phi_{\alpha_\pm \beta_i \alpha_\pm}^{\beta_i; \alpha_\mp \alpha_\pm}$, $\Phi_{\alpha_\pm \beta_i \alpha_\mp}^{\beta_i; \alpha_\pm \alpha_\pm}$, $\Phi_{\alpha_\pm \beta_i \alpha_\mp}^{\beta_i; \alpha_\mp \alpha_\mp}$ & \\
& \\
$\Phi_{\alpha_\pm \alpha_\pm \alpha_\pm}^{\alpha_\mp; \beta_i \beta_i}$, $\Phi_{\alpha_\pm \alpha_\pm \alpha_\mp}^{\alpha_\pm; \beta_i \beta_i}$, $\Phi_{\alpha_\pm \alpha_\mp \alpha_\pm}^{\alpha_\pm; \beta_i \beta_i}$, $\Phi_{\alpha_\mp \alpha_\pm \alpha_\pm}^{\alpha_\pm; \beta_i \beta_i}$ & \\
& \\
\hline
\end{tabular}
}
\caption{Table of zero sets for solutions to $Rep(D(S_3))$ pentagon equations.}
\label{tab:zeroes1}
\end{table}

Given that the solutions to the pentagon equations for $Rep(D(S_3))$ are taken as input, the zero sets are readily available.
The allowable zeroes are also constructable by using \cite[2.5.2]{Bonderson2007} iteratively - determining the zeroes for the $Rep(S_3)$ subcategories, for $Rep(D(S_3))_{ad}$, and finally $Rep(D(S_3))$.

For the $Rep(S_3)$ subcategories the only zero is $\Phi_{\beta_i,\beta_i,\beta_i}^{\beta_i;\beta_i\beta_i}$.
For $Rep(D(S_3))_{ad}$, all entries of the form $\Phi_{\beta_i \beta_j \beta_i}^{\beta_j; \beta_k \beta_k}$ are zero in all solutions and this set is invariant under permutation.
For $Rep(D(S_3))$, up to permutation there are three sets of zeroes all sharing those of $Rep(D(S_3))_{ad}$ with additional zeroes listed in Table \ref{tab:zeroes1}. 
Two sets of zeroes share form given in the first row/column of Table \ref{tab:zeroes1}.
They are distinguished by the conditions on the two dimensional objects which appear given in the first row/second column.

By inspection of Table \ref{tab:zeroes1} the first zero set is fixed by the Klein 4-group generated by $\langle ( \beta_1,\beta_2),(\beta_3,\beta_4) \rangle$ plus $(\alpha_+,\alpha_-)$.
This is a subgroup of order eight and so there are six orbits - i.e. we can obtain six sets of zeroes from those in Table \ref{tab:zeroes1}.
The second set is fixed by the dihedral group $D_4$ generated by $\langle (\beta_1,\beta_2),(\beta_1,\beta_3,\beta_2,\beta_4)\rangle$ plus $(\alpha_+,\alpha_-)$.
This is a subgroup of order sixteen so there are three orbits.
The final set of zeros in Table \ref{tab:zeroes1} is invariant under the action of $Aut(D(S_3))$.
Thus we have ten total zero sets and ten subschemes $Y_1,\ldots, Y_{10}$.

\begin{table}[b]
\makebox[\textwidth][c]{
\begin{tabular}{|l|c|c|}
\hline
Solution set, permutation 								& $Inv(D(S_3))=\{\Phi_{\alpha_\pm \alpha_\pm \alpha_\pm}^{\alpha_\pm; \beta_i \beta_i}\Phi_{\alpha_\pm \beta_i \alpha_\pm}^{\beta_i; \alpha_\pm \alpha_\pm}\vert i=1,2,3,4\}$ 				& $\underset{i\in Inv(D(S_3))}{\sum} i$ 	\\[1.5ex] \hline
Zero set 1, unpermuted 									& $\{\pm \frac 2 3, \pm \frac 2 3, \pm \frac 1 6, \pm \frac 1 6, \pm \frac 2 3, \pm \frac 2 3, \pm \frac 1 6, \pm \frac 1 6\}$ 	& $\pm \frac{10}{3}$ 						\\\hline
Zero set 1, $(\beta_2, \beta_3)$ 						& $\{\pm \frac 2 3, \pm \frac 1 6, \pm \frac 2 3, \pm \frac 1 6, \pm \frac 2 3, \pm \frac 1 6, \pm \frac 2 3, \pm \frac 1 6\}$ 	& $\pm \frac{10}{3}$ 						\\\hline
Zero set 1, $(\beta_2, \beta_3, \beta_4)$ 				& $\{\pm \frac 2 3, \pm \frac 1 6, \pm \frac 1 6, \pm \frac 2 3, \pm \frac 2 3, \pm \frac 1 6, \pm \frac 1 6, \pm \frac 2 3\}$ 	& $\pm \frac{10}{3}$ 						\\\hline
Zero set 1, $(\beta_1, \beta_3, \beta_2)$ 				& $\{\pm \frac 1 6, \pm \frac 2 3, \pm \frac 2 3, \pm \frac 1 6, \pm \frac 1 6, \pm \frac 2 3, \pm \frac 2 3, \pm \frac 1 6\}$ 	& $\pm \frac{10}{3}$ 						\\\hline
Zero set 1, $(\beta_1, \beta_3, \beta_4, \beta_2)$ 		& $\{\pm \frac 1 6, \pm \frac 2 3, \pm \frac 1 6, \pm \frac 2 3, \pm \frac 1 6, \pm \frac 2 3, \pm \frac 1 6, \pm \frac 2 3\}$	& $\pm \frac{10}{3}$ 						\\\hline
Zero set 1, $(\beta_1, \beta_3), (\beta_2, \beta_4)$ 	& $\{\pm \frac 1 6, \pm \frac 1 6, \pm \frac 2 3, \pm \frac 2 3, \pm \frac 1 6, \pm \frac 1 6, \pm \frac 2 3, \pm \frac 2 3\}$	& $\pm \frac{10}{3}$ 						\\\hline
Zero set 2, all 										& $\{\pm \frac 1 6, \pm \frac 1 6, \pm \frac 1 6, \pm \frac 1 6, \pm \frac 1 6, \pm \frac 1 6, \pm \frac 1 6, \pm \frac 1 6\}$ 	& $\pm \frac 4 3 $ 							\\\hline
Zero set 3, unpermuted 									& $\{\frac 2 3, \frac 2 3, \frac 2 3, \frac 2 3, -\frac 2 3, -\frac 2 3, -\frac 2 3, -\frac 2 3\}$ 								& 0 \\\hline
Zero set 3, $(\alpha_+,\alpha_-)$ 						& $\{-\frac 2 3, -\frac 2 3, -\frac 2 3, -\frac 2 3, \frac 2 3, \frac 2 3, \frac 2 3, \frac 2 3\}$ 								& 0 \\\hline
\end{tabular}
}
\caption{Nonzero invariants which determine the gauge and monoidal equivalence classes.}
\label{tab:nonzero}
\end{table}

At this point the classification is easy.
The invariant products $$\Phi_{\alpha_\pm \alpha_\pm \alpha_\pm}^{\alpha_\pm; \beta_i \beta_i}\Phi_{\alpha_\pm \beta_i \alpha_\pm}^{\beta_i; \alpha_\pm \alpha_\pm}$$ are defined on all of $Y$ and each takes two values on each $Y_i$.
Their values are given in Table \ref{tab:nonzero}.
Thus there are 20 gauge classes.
Their sum
$$
\sum_{i=1}^4\Phi_{\alpha_+ \alpha_+ \alpha_+}^{\alpha_+; \beta_i \beta_i}\Phi_{\alpha_+ \beta_i \alpha_+}^{\beta_i; \alpha_+ \alpha_+} +
\sum_{i=1}^4\Phi_{\alpha_- \alpha_- \alpha_-}^{\alpha_-; \beta_i \beta_i}\Phi_{\alpha_- \beta_i \alpha_-}^{\beta_i; \alpha_- \alpha_-}
$$
is invariant under the action of $Aut(D(S_3))$ and takes five values, so there are five monoidal classes.

\begin{appendices}
\section{Nonzero Invariant Monomials Determine All Invariant Monomials}\label{app:important}

For the entirety of this section let $X$ be a fusion scheme with $\mathbf F, \mathbf F' \in X$ and $Y$ a quotient scheme as in Proposition \ref{prop:evals} with $\psi(\mathbf F)=y$ and $\psi(\mathbf F')=y'$.
The point of this appendix is to prove the following:

\begin{lemma}\label{lem:equiv}
The following are equivalent:
\begin{enumerate}
	\item $F^{-1}(0)=(F')^{-1}(0)$, \label{en:1}
	\item $M_y = M_{y'}$, and \label{en:2}
	\item $A_y = A_{y'}$. \label{en:3}
\end{enumerate}
\end{lemma}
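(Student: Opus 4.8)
The plan is to prove the cycle $(\ref{en:1})\Rightarrow(\ref{en:2})\Rightarrow(\ref{en:3})\Rightarrow(\ref{en:1})$, where the first two implications are routine and the last carries all of the content. The starting point is to record how $M_y$ and $A_y$ depend on $Z:=F^{-1}(0)$. Writing $m=\prod_{\phi}\phi^{k_\phi}\in A_{Inv}(\Phi)=\ker t_A$, the evaluation $F(m)=\prod_\phi F(\phi)^{k_\phi}$ fails to be well defined exactly when some $\phi$ with $k_\phi<0$ satisfies $F(\phi)=0$ (a positive occurrence of a zero merely forces $F(m)=0$, which is well defined). Hence $m\in M_y$ iff no negatively occurring variable of $m$ lies in $Z$, while $F(m)\neq 0$ iff no variable occurring in $m$ at all lies in $Z$, so $m\in A_y$ iff $\mathrm{supp}(m)\cap Z=\varnothing$. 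Both characterizations use only that $F^{-1}(0)$ is a gauge invariant, hence constant on the orbit $\psi^{-1}(y)$.

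With this in hand, $(\ref{en:1})\Rightarrow(\ref{en:2})$ is immediate, since $M_y$ is described purely in terms of $Z$. For $(\ref{en:2})\Rightarrow(\ref{en:3})$ I would observe that $A_y$ is precisely the group of units of the monoid $M_y$: both $m$ and $m^{-1}$ lie in $M_y$ iff neither the negative support of $m$ nor that of $m^{-1}$ meets $Z$, i.e. iff $\mathrm{supp}(m)\cap Z=\varnothing$, which is exactly membership in $A_y$. Since the group of units is an intrinsic invariant of a commutative monoid, $M_y=M_{y'}$ forces $A_y=A_{y'}$.

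The substantive implication is $(\ref{en:3})\Rightarrow(\ref{en:1})$, which I would prove by contraposition. Assume $Z_F\neq Z_{F'}$ and, without loss of generality, pick $\phi_0\in Z_F\setminus Z_{F'}$. Since $A_y=\ker t_A\cap A(\Phi\setminus Z_F)$, no element of $A_y$ involves $\phi_0$. It therefore suffices to exhibit an invariant monomial $m$ with $\phi_0\in\mathrm{supp}(m)\subseteq\Phi\setminus Z_{F'}$: such an $m$ lies in $A_{y'}$ but not in $A_y$, witnessing $A_y\neq A_{y'}$. Everything thus reduces to the claim that \emph{for any $\mathbf F\in X$, every variable $\phi\in\Phi\setminus F^{-1}(0)$ occurs in some element of $\ker t_A$ supported entirely within $\Phi\setminus F^{-1}(0)$.}

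This claim is where I expect the real difficulty, and I would attack it through the invertibility of the $F$-matrices together with the structure of $t$. Since $t(\Phi_{abc}^{d;ef})=\gamma_{ab}^e\gamma_{ec}^d(\gamma_{bc}^f\gamma_{af}^d)^{-1}$, a direct cancellation shows that any cross ratio $\Phi_{abc}^{d;e_0f_0}\Phi_{abc}^{d;e_1f_1}\big(\Phi_{abc}^{d;e_0f_1}\Phi_{abc}^{d;e_1f_0}\big)^{-1}$, and more generally the ratio of two full generalized-diagonal products of a single matrix $\Phi_{abc}^d$, lies in $\ker t_A$. Because $\Phi_{abc}^d$ is invertible (Definition \ref{def:fusionsystem}), its nonzero pattern supports at least one nonzero generalized diagonal, producing invariant monomials living on nonzero entries. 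The obstacle is that a given nonzero entry $\phi_0$ need not itself lie on any nonzero generalized diagonal of its own matrix, so single-matrix invertibility alone does not place every nonzero variable in such a monomial; the cross ratios must be chained, and where this fails one must instead localize a pentagon equation \eq{F3} through $\phi_0$ exactly as in the Fibonacci computation, using that $\mathbf F$ is a genuine point of $X$. Equivalently, the claim is the linear-algebraic assertion that every column of $\mathbf A$ restricted to $\Phi\setminus F^{-1}(0)$ lies in the rational span of the others, which I would hope to extract from the coboundary form of $t$ together with the connectivity of the fusion graph supplied by the unit object and the normalization \eq{F1}. Discharging this final reduction in full generality is the crux of the argument.
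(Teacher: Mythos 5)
Your handling of the two easy implications is fine, and in fact more explicit than the paper's, which dismisses (\ref{en:1})$\Rightarrow$(\ref{en:2})$\Rightarrow$(\ref{en:3}) as ``by construction''; your observation that $A_y$ is the group of units of the monoid $M_y$ is a clean way to get (\ref{en:2})$\Rightarrow$(\ref{en:3}). Your contrapositive reduction of (\ref{en:3})$\Rightarrow$(\ref{en:1}) to the claim that every $\phi\in\Phi\setminus F^{-1}(0)$ occurs with nonzero exponent in some element of $\ker t_A$ supported in $\Phi\setminus F^{-1}(0)$ is also exactly the paper's route: that claim is Lemma \ref{lem:important}, and the deduction you describe is Corollary \ref{cor:last}. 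But you do not prove the claim, and you say so yourself; since it carries the entire content of the lemma, this is a genuine gap, not a stylistic omission.

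The paper closes the gap not through invertibility of the individual $F$-matrices but through rigidity, via the graphical calculus. Given $\phi=F_{abc}^{i;fg}\neq 0$, one tensors the corresponding tree morphism with a fourth object $d$ and resolves the resulting diagram in two ways using the pentagon and the dual (splitting) bases; comparing the two resolutions of what is a nonzero multiple of the identity on $i\otimes d$ forces at least one cubic term $F_{abc}^{i;fg}F_{agd}^{e;ij}F_{bcd}^{j;gh}$ to be nonzero. Two soft facts then finish: an equation of the form \eq{F3} is an equality of two sums, so it has either zero or at least two nonzero terms; and in the multiplicity-free case all terms of a fixed pentagon equation have the same image under $t_A$. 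Hence the ratio of the nonzero term containing $\phi$ to another nonzero term of the same equation lies in $A_y$ and carries $\phi$. This is precisely the ingredient missing from your sketch: your generalized-diagonal/cross-ratio construction does not reach every nonzero entry (as you yourself note), and your linear-algebraic reformulation (every column of $\mathbf A$ restricted to $\Phi\setminus F^{-1}(0)$ lies in the rational span of the others) merely restates the claim rather than proving it. The step you need --- that every nonzero $\phi$ actually sits inside a nonzero term of some pentagon equation --- genuinely uses that $\mathbf F$ is a point of $X$ together with duality/rigidity, not just invertibility of the matrices $\Phi_{abc}^d$ and connectivity of the fusion graph.
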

\begin{proof}
The direction \ref{en:1} $\Rightarrow$ \ref{en:2} $\Rightarrow$ \ref{en:3} is by construction.
\ref{en:3} $\Rightarrow$ \ref{en:1} is a corollary of Lemma \ref{lem:important} and Corollary \ref{cor:last}.
\end{proof}

The proof of Lemma \ref{lem:important} is made easier by using the graphical calculus for fusion categories, so for completeness, we provide a quick review.
Since our proof is for multiplicity free categories, we will omit indices at vertices.
For a full treatment of the graphical calculus see \cite{2013arXiv1305.2229D}.

Let $\mathcal C$ be a fusion category.
Basis vectors for hom spaces $\mathcal C(a\otimes b, c)$ may be represented graphically as
\begin{align*}
\trifus{a,b,c} & \in \mathcal C(a\otimes b, c) & \trispl{a,b,c} & \in \mathcal C(c,a\otimes b)
\end{align*}
Additionally we will always choose the basis for $\mathcal C(c,a \otimes b)$ which is algebraically dual to the basis for $\mathcal C(a\otimes b, c)$.
Graphically, the numbers $\tenj{F_{abc}^{d},1,e,1,1,f,1}$ are coefficients in the expression
$$\fustreel{a,b,c,d,e}=\tenj{F_{abc}^{d},1,e,1,1,f,1}\fustreer{a,b,c,d,e}$$
which relates the two different ways in which bases for $\mathcal C(a\otimes b\otimes c,d)$ spaces can be decomposed given bases for the spaces $\mathcal C(a\otimes b,c)$.
One writes a similar expression for the splitting spaces
$$\spltreel{a,b,c,d,e}=\tenj{G_{abc}^{d},1,f,1,1,e,1}\spltreer{a,b,c,d,f}$$
where $G_{abc}^d$ is the inverse matrix to $F_{abc}^d$ satisfying equivalent relations to \eq{F2} and \eq{F3}.

\begin{lemma}\label{lem:important}
Every $\phi \in \Phi$ such that $F(\phi)\neq 0$ has non-zero power in some $m \in A_{y}$.
\end{lemma}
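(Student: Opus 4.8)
The plan is to reinterpret $A_{Inv}(\Phi)=\ker t_A$ combinatorially and then to produce, for each nonzero $\phi$, an explicit kernel element supported on it. First I would record that $t$ literally compares two parenthesizations: to each planar binary fusion tree $T$ for $a\otimes b\otimes c\to d$ assign $\mu(T)\in A(\Gamma)$, the product of the two structure-constant labels $\gamma$ sitting at its vertices. Then $t(\tenj{\Phi_{abc}^d,1,e,1,1,f,1})=\mu(T_e)\,\mu(T_f)^{-1}$, where $T_e$ and $T_f$ are the left- and right-associated trees through the channels $e$ and $f$. Hence $t_A$ is a boundary-type homomorphism, and the product of the $F$-entries read off along any closed loop of $F$-moves telescopes to $0$ under $t_A$; that is, every closed loop of $F$-moves gives an element of $A_{Inv}(\Phi)$. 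Thus a nonzero $\phi$ lies in some $m\in A_y$ exactly when it lies on a closed loop of $F$-moves all of whose entries are nonzero at $\mathbf F$, and the lemma becomes the assertion that no nonzero $F$-move is isolated in the graph of nonzero moves.

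To build such a loop I would embed $\phi=\tenj{\Phi_{abc}^d,1,e,1,1,f,1}$ into a pentagon. Choosing a spectator object $x$ and applying \eq{F3} to the five re-associations of $a,b,c,x$, the move that reassociates $a,b,c$ while leaving $x$ a bystander is exactly $\phi$, so $\phi$ occurs as a factor on the two-factor side of \eq{F3}. If the remaining factor on that side is nonzero, then the product is nonzero, equality in \eq{F3} forces some term on the opposite (three-factor) side to be nonzero as well, and the ratio of these two terms is a loop-monomial through $\phi$ every factor of which is nonzero. By the first paragraph this ratio lies in $A_{Inv}(\Phi)$; it involves only variables outside $F^{-1}(0)$, so it lies in $A_y$, and $\phi$ appears in it with nonzero net power.

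The hard part will be guaranteeing the nonvanishing of the companion factor, i.e. ruling out that every admissible configuration pairs $\phi$ only with a vanishing partner (equivalently, that $\phi$ is a bridge in the graph of nonzero moves). I would attack this by exploiting the freedom in the spectator $x$ and, when a single pentagon fails, chaining several pentagons together with the commuting ``square'' relations among distant moves to route a nonzero return path around the vanishing edges; the determinant relation, whose monomials in a fixed block $F_{abc}^d$ all share one gauge class, supplies further nonzero loop-monomials internal to a single block. The point that should make this work is that $F^{-1}(0)$ is not arbitrary but is constrained by \eq{F2}, \eq{F3} and the invertibility of the $\Phi_{abc}^d$, which is precisely what forbids a nonzero edge from being isolated; the graphical calculus recalled before the lemma is the tool that keeps the bookkeeping of these loops and of their nonvanishing manageable.
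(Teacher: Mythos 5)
Your first paragraph is sound and matches the paper's framework: the observation that all terms of a single pentagon equation \eq{F3} have the same image under $t_A$ (your ``loop'' statement) is exactly how the paper manufactures elements of $A_y$, namely as ratios of two nonzero terms of one pentagon equation. (One slip: with a spectator $x$ on either end, the move reassociating $a,b,c$ sits on the \emph{three}-factor side of \eq{F3}, not the two-factor side, so there are two companion factors rather than one.) The problem is that the step you yourself flag as ``the hard part'' --- exhibiting one pentagon through $\phi$ whose companion factors are all nonzero --- \emph{is} the content of the lemma, and none of your fallback strategies closes it. ``Chaining pentagons'' and varying the spectator are never made precise, and the determinant relation genuinely does not suffice: invertibility of a block guarantees that \emph{some} permutation monomial of its determinant is nonzero, but not one containing a given nonzero entry. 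For instance in the block $\left(\begin{smallmatrix}1 & 1\\ 0 & 1\end{smallmatrix}\right)$ the $(1,2)$ entry is nonzero yet appears only in the vanishing anti-diagonal monomial, so ratios of nonzero determinant monomials never see it.

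The missing idea, which is the paper's actual argument, is a short graphical-calculus computation that converts invertibility into exactly the nonvanishing statement you need. Writing $\phi=\tenj{F_{abc}^{i},1,f,1,1,g,1}\neq 0$ (shorthand $F_{abc}^{i;fg}$), tensor with a fourth object $d$, resolve the identity on $i\otimes d$ into the projections onto channels $e$, and expand $\phi\cdot\mathrm{id}_{i\otimes d}$ by applying the moves $F_{agd}$, $F_{bcd}$ followed by their inverses $G_{agd}$, $G_{bcd}$. Comparing coefficients of each projection yields, for every channel $e$ of $i\otimes d$,
\[
\sum_{h,j}F_{agd}^{e;ij}\,F_{bcd}^{j;gh}\,G_{agd}^{e;ji}\,G_{bcd}^{j;hg}=1,
\]
so some summand is nonzero; in particular the cubic pentagon term $F_{abc}^{i;fg}F_{agd}^{e;ij}F_{bcd}^{j;gh}$ is nonzero for some $e,h,j$. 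Since an equation of the form \eq{F3} with one nonzero term must have at least two, the ratio of this term with another nonzero term of the same equation lies in $A_y$ and carries $\phi$ to a nonzero power --- your paragraph-one machinery then finishes the proof. Until you supply this computation (or an equivalent consequence of invertibility), your proposal reduces the lemma to an unproven combinatorial claim about the ``graph of nonzero moves'' rather than proving it.
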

\begin{proof}
The case where $m=\phi$ (i.e. $\phi$ is gauge invariant) is trivial. 
Recall that $F(\phi)=\tenj{F_{abc}^{d},1,e,1,1,f,1}$ for some $\{a,b,c,d,e,f\}$ and consider the morphism given by
\begin{align*}
\hspace{-20pt}\dtod{a,b,c,d,e,f} =\sum_{f'}\tenj{F_{abc}^{d},1,e,1,1,f',1}\dtodt{a,b,c,d,f,f'} = \tenj{F_{abc}^{d},1,e,1,1,f,1}\dtodt{a,b,c,d,f,f} = \tenj{F_{abc}^{d},1,e,1,1,f,1}\dtodr{d}.
\end{align*}

We then fix an object $i$ and tensor this with some $d$ to obtain

\begin{centering}
\begin{align*}
\hspace{-40pt}\tenj{F_{abc}^{i},1,f,1,1,g,1}\sum_e \propicmt{i,d,e} &= &\tenj{F_{abc}^{i},1,f,1,1,g,1}\propicmo{i,d} &=  &\propicz{a,b,c,d,i,f,g} &= &\underset{e,e'}{\sum}\propicpo{a,b,c,d,i,e,f,g} &=
\end{align*}
\begin{align*}
\hspace{-20pt}\sum_{e,g'}\tenj{F_{abc}^{i},1,f,1,1,g',1}\propicpoa{a,b,c,d,i,e,g',g} &= &\underset{e,h',j'}{\sum}\tenj{F_{abc}^{i},1,f,1,1,g,1}\tenj{F_{agd}^{e},1,i,1,1,j,1}\tenj{F_{bcd}^{j},1,g,1,1,h,1}\propicpt{a,b,c,d,i,e,g,h,j} &=
\end{align*}
\end{centering}
\begin{displaymath}
= \underset{e,h,h',j,j'}{\sum}\tenj{F_{abc}^{i},1,f,1,1,g,1}\tenj{F_{agd}^{e},1,i,1,1,j,1}\tenj{F_{bcd}^{j},1,g,1,1,h,1}\tenj{G_{agd}^{e},1,j,1,1,i,1}\tenj{G_{bcd}^{j},1,h,1,1,g,1}\propicpr{a,b,c,d,i,e,h,j} 
\end{displaymath}
\begin{equation}
=\tenj{F_{abc}^{i},1,f,1,1,g,1}\left(\underset{e,h,j}{\sum}\tenj{F_{agd}^{e},1,i,1,1,j,1}\tenj{F_{bcd}^{j},1,g,1,1,h,1}\tenj{G_{agd}^{e},1,j,1,1,i,1}\tenj{G_{bcd}^{j},1,h,1,1,g,1}\right)\propicmt{i,d,e}
\end{equation}

This morphism is a multiple $\tenj{F_{abc}^{i},1,f,1,1,g,1}$ of the identity on $i\otimes d$, and we have that at least one of the terms
\begin{displaymath}
\tenj{F_{abc}^{i},1,f,1,1,g,1}\tenj{F_{agd}^{e},1,i,1,1,j,1}\tenj{F_{bcd}^{j},1,g,1,1,h,1}
\end{displaymath}
must be non-zero. 
Each of these is an evaluation of a cubic term in some \eq{F3} and every equation of the form \eq{F3} has either zero or at least two nonzero terms.
In the multiplicity free case, given an equation of the form \eq{F3} all terms in the equation transform as $t_{ab}^f t_{f c}^i t_{i d}^e (t_{a j}^e t_{b h}^j t_{cd}^h)^{-1}$. 
The ratio of any two terms is in $A_{Inv}(\Phi)$ and the ratio of any two non-zero terms is in $A_y$.
\end{proof}

\begin{corollary}\label{cor:last}
$A_y$ uniquely determines $F^{-1}(0)$.
\end{corollary}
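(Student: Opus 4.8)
The plan is to characterize $F^{-1}(0)$ purely in terms of which variables can occur in the monomials of $A_y$, and then to observe that this characterization refers only to $A_y$ (together with the fixed data $\Phi$). Writing $\operatorname{supp}(m)$ for the support of a monomial $m=\prod_i\phi_i^{k_i}$ (the set of $\phi_i$ with $k_i\neq 0$), the key equivalence to establish is
$$\phi\in F^{-1}(0)\iff \phi\notin\operatorname{supp}(m)\text{ for every }m\in A_y.$$
Once this holds, one recovers $F^{-1}(0)=\Phi\setminus\bigcup_{m\in A_y}\operatorname{supp}(m)$, and the right-hand side depends only on the group $A_y$. This is exactly the assertion that $A_y$ uniquely determines $F^{-1}(0)$.

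Both directions of the equivalence are short. For the forward implication I would argue directly from the definition of $A_y$. Suppose $F(\phi)=0$ and that $\phi$ occurs with nonzero exponent $k$ in some $m=\prod_i\phi_i^{k_i}\in A_y$. If $k>0$, the factor $F(\phi)^k=0$ forces $F(m)=0$, contradicting the defining condition $F(m)\neq 0$ for membership in $A_y$; if $k<0$, then $F(m)$ involves $0^{-|k|}$ and is undefined, contradicting $m\in M_y\supseteq A_y$ (well-definedness of the evaluation). Hence no variable of $F^{-1}(0)$ can lie in the support of any element of $A_y$. The reverse implication is precisely the content of Lemma \ref{lem:important}: every $\phi$ with $F(\phi)\neq 0$ appears with nonzero power in some $m\in A_y$. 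Combining the two directions yields the displayed equivalence.

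The main obstacle is not located in the corollary itself, which, given Lemma \ref{lem:important}, reduces to careful bookkeeping; rather, the subtle point is ensuring that the definition of $A_y$ excludes a vanishing variable in \emph{both} senses, i.e. whether it appears with positive exponent (making $F(m)=0$) or with negative exponent (making $F(m)$ undefined). The genuinely substantive step — that each nonvanishing variable is detected by some invariant monomial — is already supplied by Lemma \ref{lem:important}, whose proof extracts from the pentagon relation \eq{F3} an equation possessing at least two nonzero cubic terms, the ratio of which lies in $A_y$ and has $\phi$ in its support. Thus the corollary is essentially immediate once that lemma is in hand.
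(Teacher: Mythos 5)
Your proof is correct and follows the same route the paper intends: the paper states the corollary without an explicit argument, treating it as immediate from Lemma \ref{lem:important} together with the definition of $A_y$, and your write-up simply makes that implicit argument precise via the identity $F^{-1}(0)=\Phi\setminus\bigcup_{m\in A_y}\operatorname{supp}(m)$. The two directions you check (vanishing variables cannot appear in any $m\in A_y$ by definition of $M_y$ and $A_y$; nonvanishing variables must appear in some $m\in A_y$ by Lemma \ref{lem:important}) are exactly the content the paper relies on.
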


Lemma \ref{lem:equiv} is then a corollary of Lemma \ref{lem:important}.

\end{appendices}

\bibliographystyle{amsalpha}
{\footnotesize
\bibliography{library}

\def\cprime{$'$}
\providecommand{\bysame}{\leavevmode\hbox to3em{\hrulefill}\thinspace}
\providecommand{\MR}{\relax\ifhmode\unskip\space\fi MR }
\providecommand{\MRhref}[2]{%
  \href{http://www.ams.org/mathscinet-getitem?mr=#1}{#2}
}
\providecommand{\href}[2]{#2}
\begin{thebibliography}{BNRW13}

\bibitem[BJ14]{2014arXiv1405.7950B}
T.~{Basak} and R.~{Johnson}, \emph{{Indicators of Tambara-Yamagami categories
  and Gauss sums}}, ArXiv e-prints (2014), \arxiv{1405.7950}.

\bibitem[BK01]{MR1797619}
Bojko Bakalov and Alexander Kirillov, Jr., \emph{Lectures on tensor categories
  and modular functors}, University Lecture Series, vol.~21, American
  Mathematical Society, Providence, RI, 2001, \mathscinet{MR1797619}.
  \MR{1797619 (2002d:18003)}

\bibitem[BNRW13]{2013arXiv1310.7050B}
Paul {Bruillard}, Siu-Hung {Ng}, Eric~C. {Rowell}, and Zhenghan {Wang},
  \emph{{Rank-finiteness for modular categories}}, ArXiv e-prints (2013),
  \arxiv{1310.7050}.

\bibitem[{Bon}07]{Bonderson2007}
Parsa {Bonderson}, \emph{{Non-Abelian ANyons and Interferometry}}, Ph.D.
  thesis, 2007.

\bibitem[DHW13]{2013arXiv1305.2229D}
Orit {Davidovich}, Tobias {Hagge}, and Zhenghan {Wang}, \emph{{On Arithmetic
  Modular Categories}}, ArXiv e-prints (2013), \arxiv{1305.2229}.

\bibitem[Dol03]{MR2004511}
Igor Dolgachev, \emph{Lectures on invariant theory}, London Mathematical
  Society Lecture Note Series, vol. 296, Cambridge University Press, Cambridge,
  2003, \mathscinet{MR2004511}, \doi{10.1017/CBO9780511615436}. \MR{2004511
  (2004g:14051)}

\bibitem[EG14]{MR3167494}
David~E. Evans and Terry Gannon, \emph{Near-group fusion categories and their
  doubles}, Adv. Math. \textbf{255} (2014), 586--640, \mathscinet{MR3167494},
  \doi{10.1016/j.aim.2013.12.014}, \arxiv{1208.1500}. \MR{3167494}

\bibitem[EG15]{2015arXiv150603546E}
David~E. {Evans} and Terry {Gannon}, \emph{{Non-unitary fusion categories and
  their doubles via endomorphisms}}, ArXiv e-prints (2015), \arxiv{1504.03546}.

\bibitem[ENO05]{MR2183279}
Pavel Etingof, Dmitri Nikshych, and Viktor Ostrik, \emph{On fusion categories},
  Ann. of Math. (2) \textbf{162} (2005), no.~2, 581--642,
  \mathscinet{MR2183279}, \doi{10.4007/annals.2005.162.581}, \arxiv =
  {math/0203060}. \MR{2183279 (2006m:16051)}

\bibitem[GNN09]{MR2587410}
Shlomo Gelaki, Deepak Naidu, and Dmitri Nikshych, \emph{Centers of graded
  fusion categories}, Algebra Number Theory \textbf{3} (2009), no.~8, 959--990,
  \mathscinet{MR2587410}, \doi{10.2140/ant.2009.3.959}, \arxiv{0905.3117}.
  \MR{2587410 (2011b:18011)}

\bibitem[Hum75]{MR0396773}
James~E. Humphreys, \emph{Linear algebraic groups}, Graduate Texts in
  Mathematics, No. 21, Springer-Verlag, New York-Heidelberg, 1975,
  \mathscinet{MR0396773}. \MR{0396773 (53 \#633)}

\bibitem[KW93]{MR1237835}
David Kazhdan and Hans Wenzl, \emph{Reconstructing monoidal categories}, I.
  {M}. {G}el\cprime fand {S}eminar, Adv. Soviet Math., vol.~16, Amer. Math.
  Soc., Providence, RI, 1993, \mathscinet{MR1237835}, pp.~111--136. \MR{1237835
  (95e:18007)}

\bibitem[MFK94]{MR1304906}
D.~Mumford, J.~Fogarty, and F.~Kirwan, \emph{Geometric invariant theory}, third
  ed., Ergebnisse der Mathematik und ihrer Grenzgebiete (2) [Results in
  Mathematics and Related Areas (2)], vol.~34, Springer-Verlag, Berlin, 1994,
  \mathscinet{MR1304906}, \doi{10.1007/978-3-642-57916-5}. \MR{1304906
  (95m:14012)}

\bibitem[MW01]{MR2049753}
Daniele Micciancio and Bogdan Warinschi, \emph{A linear space algorithm for
  computing the {H}ermite normal form}, Proceedings of the 2001 {I}nternational
  {S}ymposium on {S}ymbolic and {A}lgebraic {C}omputation, ACM, New York, 2001,
  \mathscinet{MR2049753}, \doi{10.1145/384101.384133}, pp.~231--236
  (electronic). \MR{2049753 (2005b:15034)}

\bibitem[NS07]{MR2381536}
Siu-Hung Ng and Peter Schauenburg, \emph{Higher {F}robenius-{S}chur indicators
  for pivotal categories}, Hopf algebras and generalizations, Contemp. Math.,
  vol. 441, Amer. Math. Soc., Providence, RI, 2007, \mathscinet{MR2381536},
  \doi{10.1090/conm/441/08500}, \arxiv{math/0503167}, pp.~63--90. \MR{2381536
  (2008m:18015)}

\bibitem[Ost03]{MR1976459}
Victor Ostrik, \emph{Module categories, weak {H}opf algebras and modular
  invariants}, Transform. Groups \textbf{8} (2003), no.~2, 177--206,
  \mathscinet{MR1976459}, \doi{10.1007/s00031-003-0515-6},
  \arxiv{math/0111139}. \MR{1976459 (2004h:18006)}

\bibitem[Pop90]{MR1055708}
S.~Popa, \emph{Classification of subfactors: the reduction to commuting
  squares}, Invent. Math. \textbf{101} (1990), no.~1, 19--43,
  \mathscinet{1055708}, \doi{10.1007/BF01231494}. \MR{1055708 (91h:46109)}

\bibitem[RT90]{MR1036112}
N.~Yu. Reshetikhin and V.~G. Turaev, \emph{Ribbon graphs and their invariants
  derived from quantum groups}, Comm. Math. Phys. \textbf{127} (1990), no.~1,
  1--26, \mathscinet{MR1036112}. \MR{1036112 (91c:57016)}

\bibitem[RT91]{MR1091619}
N.~Reshetikhin and V.~G. Turaev, \emph{Invariants of {$3$}-manifolds via link
  polynomials and quantum groups}, Invent. Math. \textbf{103} (1991), no.~3,
  547--597, \mathscinet{MR1091619}, \doi{10.1007/BF01239527}. \MR{1091619
  (92b:57024)}

\bibitem[TV92]{MR1191386}
V.~G. Turaev and O.~Ya. Viro, \emph{State sum invariants of {$3$}-manifolds and
  quantum {$6j$}-symbols}, Topology \textbf{31} (1992), no.~4, 865--902,
  \mathscinet{MR1191386}, \doi{10.1016/0040-9383(92)90015-A}. \MR{1191386
  (94d:57044)}

\bibitem[TW05]{MR2132671}
Imre Tuba and Hans Wenzl, \emph{On braided tensor categories of type {$BCD$}},
  J. Reine Angew. Math. \textbf{581} (2005), 31--69, \mathscinet{MR2132671},
  \doi{10.1515/crll.2005.2005.581.31}. \MR{2132671 (2006b:18003)}

\bibitem[TY98]{MR1659954}
Daisuke Tambara and Shigeru Yamagami, \emph{Tensor categories with fusion rules
  of self-duality for finite abelian groups}, J. Algebra \textbf{209} (1998),
  no.~2, 692--707, \mathscinet{MR1659954}, \doi{10.1006/jabr.1998.7558}.
  \MR{1659954 (2000b:18013)}

\bibitem[TY05]{MR2146652}
Patrice Tauvel and Rupert W.~T. Yu, \emph{Lie algebras and algebraic groups},
  Springer Monographs in Mathematics, Springer-Verlag, Berlin, 2005,
  \mathscinet{MR2146652}. \MR{2146652 (2006c:17001)}

\end{thebibliography}
}

\end{document}